\documentclass[10pt]{amsart}
\usepackage{amsmath,amsthm,amsfonts,amssymb}
\usepackage{hyperref}
\usepackage[all]{xy}
\usepackage{graphicx}
\usepackage{amscd}
\usepackage{graphicx}
\usepackage{pb-diagram}
\usepackage{color}
\usepackage{kotex}
\usepackage{tikz, caption, subcaption}
\usepackage{enumitem}
\usepackage[utf8]{inputenc}

\usetikzlibrary{positioning, fit, calc}   
\tikzset{block/.style={draw, thick, text width=3cm ,minimum height=1.3cm, align=center},   
line/.style={-latex}     
}  

\numberwithin{equation}{section}

\newtheorem{theorem}{Theorem}[section]

\newtheorem{corollary}[theorem]{Corollary}

\newtheorem{lemma}[theorem]{Lemma}

\newtheorem{prop}[theorem]{Proposition}

\addtolength{\voffset}{-1.5cm}
\addtolength{\textheight}{2cm}

\newcommand{\R}{{\mathbb R}}
\newcommand{\C}{\mathbb C}
\newcommand{\Z}{{\mathbb Z}}

\newcommand{\CF}{{\mathcal F}}

\newcommand{\CI}{{\mathcal I}}
\newcommand{\CJ}{{\mathcal J}}

\newcommand{\CT}{{\mathcal T}}

\newcommand{\ep}{{\varepsilon}}
\newcommand{\wt}{\widetilde}
\newcommand{\wh}{\widehat}
\newcommand{\supp}{\mathrm{supp\,}}

\newcommand{\IM}{\mathrm{Im\,}}
\newcommand{\RE}{\mathrm{Re\,}}

\newcommand{\delc}{{\delta_\circ}}

\begin{document}

\author[E. Jeong, Y. Kwon, and S. Lee]{Eunhee Jeong, Yehyun Kwon, and Sanghyuk Lee}

\subjclass[2010]{42B15; 35B60}
\keywords{Carleman inequality, unique continuation, polyharmonic operator}

\address{(Eunhee Jeong)  Department of Mathematics Education and  Institute of Pure and Applied Mathematics, Jeonbuk National University, Jeonju 54896, Republic of Korea}
\email{eunhee@jbnu.ac.kr}

\address{(Yehyun Kwon) School of Mathematics, Korea Institute for Advanced Study, Seoul 02455, Republic of Korea}
\email{yhkwon@kias.re.kr}

\address{(Sanghyuk Lee) Department of Mathematical Sciences, Seoul National University, Seoul 151-747, Republic of Korea}
\email{shklee@snu.ac.kr}

\title[Carleman inequalities and unique continuation]{Carleman inequalities and unique continuation for the polyharmonic operators}
\maketitle

\begin{abstract}
We obtain a complete characterization of $L^p-L^q$ Carleman estimates with weight $e^{v\cdot x}$ for the polyharmonic operators. Our result extends the Carleman inequalities for the Laplacian due to Kenig--Ruiz--Sogge. Consequently, we obtain new unique continuation properties of  higher order Schr\"odinger equations relaxing the integrability assumption on the solution spaces.
\end{abstract}

\section{Introduction}
Let $d$ and $k$ be positive integers and $\Omega$ a non-empty connected open set in $\R^d$. We denote by $V$ a complex-valued function defined on $\Omega$ of which precise description to be given below. We say that the differential inequality 
\begin{equation}\label{e:diff_ineq}
	|\Delta^k u | \le |Vu| \ \ \text{in} \ \ \Omega
\end{equation}
has the \emph{unique continuation property} (UCP) in the Sobolev space $W^{2k,p}_{\mathrm{loc}}(\Omega)$ if every $u\in W^{2k,p}_{\mathrm{loc}}(\Omega)$ satisfying \eqref{e:diff_ineq} and vanishing on a non-empty open subset of $\Omega$ is identically zero in $\Omega$.  

When $k=1$ and $d\ge3$, the inequality \eqref{e:diff_ineq} includes the Schr\"odinger equation $(-\Delta+V)u=0$ as a special case, of which UCP has been extensively studied  over the past several decades by numerous authors. We refer the reader to the survey articles \cite{Ken89, W93, KT01} and \cite{Ler19}, and references therein. For $V\in L_{\mathrm{loc}}^{d/2}(\Omega)$,  Kenig,  Ruiz, and Sogge \cite{KRS87} proved that the inequality
\begin{equation}\label{e:diff-ineq-1}
	|\Delta u | \le |Vu| \ \ \text{in} \ \ \Omega
\end{equation}
has the UCP in $W_{\mathrm{loc}}^{2,p}(\Omega)$ whenever $p>\frac{2d}{d+3}$. The main ingredient in their argument is the Carleman inequality of the form 
\begin{equation}\label{e:carl-1}
\| e^{v\cdot x} u\|_{L^q(\R^d)} \le C \|e^{v\cdot x} \Delta u\|_{L^p(\R^d)}
\end{equation} 
where $C$ is a constant independent of $v\in \R^d$ and $u\in C^\infty_0(\R^d)$. For $p,q$ satisfying 
\begin{equation}\label{e:cond-1}
\tfrac1p-\tfrac1q=\tfrac 2d \ \ \text{and} \ \  \tfrac{d+1}{2d}<\tfrac1p< \tfrac{d+3}{2d},
\end{equation}
they deduced \eqref{e:carl-1} from the more general \emph{uniform Sobolev inequality} 
\begin{equation}\label{e:unisob}
\| u\|_{L^q(\R^d)} \le C \|(-\Delta+a\cdot\nabla+b) u\|_{L^p(\R^d)}, \ \ \forall (a,b)\in \C^d\times \C,
\end{equation}
which holds if and only if \eqref{e:cond-1} is satisfied. 

In \cite{jkl18} the authors characterized the full range of $(p,q)$ on which the Carleman inequality \eqref{e:carl-1} holds. More precisely, under the assumption $1<p,q<\infty$, it was proved that \eqref{e:carl-1} holds if and only if  
\[	\tfrac1p-\tfrac1q=\tfrac 2d \ \ \text{and} \ \  \tfrac{d^2-4}{2d(d-1)}\le \tfrac1p \le \tfrac{d+2}{2(d-1)}.	\]
This range is larger than that of \eqref{e:cond-1} whenever $d\ge 4$. Consequently, the integrability assumption on the solution space in which the differential inequality \eqref{e:diff-ineq-1} has the UCP can be relaxed. That is to say, \eqref{e:diff-ineq-1} has the UCP in $\bigcup_{p>1} W^{2,p}_{\mathrm{loc}}(\Omega)$ if $d=3$ or $4$, and in $W^{2,\frac{2(d-1)}{d+2}}_{\mathrm{loc}}(\Omega)$ if $d\ge5$. 

The polyharmonic operators $\Delta^k$, $k\ge2$, are prototypical examples of higher order elliptic operators and have abundant applications in various physical contexts (\cite{Mel03, GGS}). In this paper, we aim to study the UCP of the differential inequality \eqref{e:diff_ineq}, which is a natural higher order analogue of \eqref{e:diff-ineq-1}. The UCP of \eqref{e:diff_ineq} is closely tied to the Carleman inequality
\begin{equation}\label{carl}
\| e^{v\cdot x} u\|_{L^q(\R^d)} \le C \|e^{v\cdot x} (-\Delta)^k  u\|_{L^p(\R^d)}
\end{equation} 
with a constant $C$, independent of  $v\in\R^d$ and $u\in C_0^\infty(\R^d)$.  

If $p=\frac{2d}{d+2k}$ and $q=p'=\frac{2d}{d-2k}$, then \eqref{carl} can be obtained from the Carleman inequality with weights $|x|^{-\tau}$ proved in \cite{JK85, La88}, as was observed by Wolff \cite{W93} for the case $k=1$. See \cite[Proposition 2.2]{KU16}. 

In this paper, we completely characterize the range of $p$ and $q$,  on which the Carleman inequality \eqref{carl} holds.

\begin{theorem}\label{t:carl}
Let $1<p, q <\infty$. The  inequality \eqref{carl} holds if and only if \begin{equation}\label{e:cond}
	\tfrac1p- \tfrac1q= \tfrac{2k}d \ \ \text{and} \ \ \tfrac{(d+2k)(d-2)}{2d(d-1)} \le \tfrac1p \le \tfrac{d+2k}{2(d-1)}.
\end{equation}
\end{theorem}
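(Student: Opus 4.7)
The plan is to convert the Carleman inequality into a uniform Fourier multiplier estimate and then analyze that multiplier via a decomposition adapted to its singular set. Writing $w=e^{v\cdot x}u$, the inequality \eqref{carl} is equivalent to the uniform (in $v\in\R^d$) bound
$$\|w\|_{L^q(\R^d)}\le C\|(-\Delta+2v\cdot\nabla-|v|^2)^k w\|_{L^p(\R^d)};$$
on the Fourier side this is the $L^p\to L^q$ boundedness of the multiplier $m_v(\xi)=(|\xi|^2+2iv\cdot\xi-|v|^2)^{-k}$. Rotation and the dilation $x\mapsto x/|v|$ reduce matters to $v=e_d$, with the homogeneity $\tfrac1p-\tfrac1q=\tfrac{2k}d$ being exactly what makes this rescaling consistent with the $L^p$--$L^q$ setting. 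The normalized symbol $m(\xi)=(|\xi'|^2+(\xi_d+i)^2)^{-k}$ has singular set the codimension-two sphere $\Sigma=\{\xi_d=0,\,|\xi'|=1\}$, along which $m$ exhibits a $k$-th order blow-up.

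For sufficiency I would decompose $m$ into three pieces. The high-frequency tail behaves like $|\xi|^{-2k}$, so the associated operator is dominated by the Riesz potential of order $2k$ and is $L^p\to L^q$ bounded on the full scaling line by Hardy--Littlewood--Sobolev. A compactly supported bounded piece away from $\Sigma$ is trivial. The substantive piece is a tube localization around $\Sigma$, which I would split dyadically as $m\sim\sum_{j\ge 1}m_j$ where $m_j$ is concentrated on $\{|p|\sim 2^{-j}\}$ --- the $2^{-j}$-neighborhood of $\Sigma$ --- and $|m_j|\lesssim 2^{jk}$. Since $\Sigma$ is (up to rotation) the unit $(d-2)$-sphere inside the hyperplane $v^\perp$, the Stein--Tomas restriction theorem for $S^{d-2}$, combined with interpolation against trivial $L^2$ and $L^\infty$ endpoints in the two normal directions, gives
$$\|m_j(D)\|_{L^p\to L^q}\lesssim 2^{-j(\sigma(p,q)-k)}$$
for an explicit exponent $\sigma(p,q)$ that strictly exceeds $k$ precisely throughout the range \eqref{e:cond}. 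Summing the resulting geometric series in $j$ closes the argument for sufficiency. Necessity would then be established by Knapp-type examples: choose $\widehat u$ to be a smooth bump on a $\delta$-cap of $\Sigma$ with the anisotropic dimensions $\delta$ tangentially and $\delta^2$ in each of the two transverse directions, and compute both sides of the dual inequality as $\delta\to 0$; this forces $\tfrac1p\le\tfrac{d+2k}{2(d-1)}$, and duality gives the matching lower endpoint.

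The hard step is the analysis of the singular pieces $m_j$: the $k$-th order vanishing of $p$ along $\Sigma$ produces the amplification $2^{jk}$, and the restriction estimate for $S^{d-2}$ must provide a strictly better gain, which is what ties the endpoint of $\tfrac1p$ linearly to $k$ in \eqref{e:cond}. This also explains why Theorem \ref{t:carl} cannot be proved by iterating the first-order Carleman inequality of \cite{jkl18}: when $d\ge 5$ the range obtained there is too narrow to accommodate $k\ge 2$ consecutive composition steps of size $\tfrac2d$ along the scaling line, so the $k$-th order singularity must be confronted directly on the Fourier side.
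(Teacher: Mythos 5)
Your overall blueprint---reduce \eqref{carl} to a uniform multiplier estimate, recognize the codimension-two sphere $\Sigma=S^{d-2}\times\{0\}$ as the singular set, split off a Mikhlin/HLS tail, and attack the singular part by dyadic decomposition near $\Sigma$---matches the paper's architecture. But the proposal has two genuine gaps, both at the step you yourself flag as ``the hard step.''

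For sufficiency, you propose to estimate the dyadic pieces by ``the Stein--Tomas restriction theorem for $S^{d-2}$, combined with interpolation against trivial $L^2$ and $L^\infty$ endpoints in the two normal directions.'' This is essentially the $k=1$ strategy of \cite{jkl18}, and the paper explicitly points out that it does not produce the sharp exponent when $k\ge 2$: the multiplier near $\Sigma$ has a $k$-th order pole, which the restriction--extension operator (multiplier $\chi_+^{-1}$ pulled back to the sphere, index $-1$) cannot see. The paper's replacement is the Bochner--Riesz operator $T_k$ of \emph{negative index} $-k$ on $\R^{d-1}$, for which the sharp restricted weak type endpoint $L^{p,1}\to L^{q,\infty}$ at $B_k^{d}$ (Guti\'errez, Bak, Cho--Kim--Lee--Shim) supplies the missing extremal point; Stein--Tomas appears only as the other endpoint ($L^2\to L^{2d/(d-2)}$, Lemma~\ref{l:L2}), and real interpolation between these two gives the triangle $\mathcal T^d_k$. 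Moreover, even with $T_k$ in hand, the $\ep$-uniform restricted weak type bound fails for the raw dyadic pieces when $k\ge 2$; this is the reason for the order-raising and order-lowering identities built around the vector field $L$ (Lemmas~\ref{l:order-increasing} and~\ref{l:counter}), which allow the singularity order to be shuttled between the distribution $\lambda^*\chi_+^{-k}$ and derivatives of the test function. None of this machinery is present in your proposal, and without it the claimed decay rate $2^{-j(\sigma(p,q)-k)}$ with $\sigma(p,q)>k$ throughout \eqref{e:cond} cannot be reached.

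For necessity, a single Knapp example plus duality yields only one of the two boundary constraints. The paper needs two independent lower bounds for $\|m_\ep(D)\|_{p\to q}$ (Propositions~\ref{p:example1} and~\ref{p:example2}): the first is a Knapp-type construction, while the second---which pins down the scaling line $\frac{d}{p}-\frac{1}{q}=\frac{d-2+2k}{2}$ and hence the exact range in \eqref{e:cond}---is not obtainable from the Knapp cap by duality. It requires a delicate analysis (Section~\ref{s:pf_lower}) involving Bessel asymptotics and repeated integration by parts to kill the alternating cancellation in $\IM\wt m_\ep$, a difficulty specific to $k\ge 2$. Your sketch treats the necessity direction as routine, which it is not.
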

If $\frac{d-2}2\le k<\frac d2$, then the second condition in \eqref{e:cond} is vacuously true provided that $1<p,q<\infty$ and $\frac1p-\frac1q=\frac{2k}d$. Hence, in this case, the inequality \eqref{carl} holds for all $1<p,q<\infty$ satisfying the gap condition $\frac1p-\frac1q=\frac{2k}d$. For $k<\frac{d-2}{2}$, however, the second condition in  \eqref{e:cond} is nontrivial.\footnote{See, for example, Figures \ref{fig1} and \ref{fig2} in Section \ref{s:c_wucp}, where the necessary and sufficient condition for \eqref{carl} is described by the thick line segments.} For $k\ge \frac d2$, \eqref{carl} is not true because the set \eqref{e:cond} is empty. 

By a standard density argument the Carleman inequality \eqref{carl} is also valid for all $u\in W^{2k,p}_0(\R^d)$ for every $p$ given as in Theorem \ref{t:carl}. We shall use this fact in Section \ref{s:wucp} to deduce a unique continuation result (Theorem \ref{t:wucp}) from Theorem \ref{t:carl}.

In order to prove Theorem \ref{t:carl}, we basically follow the strategy of the authors' previous work \cite{jkl18}. This reduces the Carleman inequality \eqref{carl} to obtaining $L^p-L^q$ boundedness of a Fourier multiplier operator of which singularity is on the $d-2$ dimensional unit sphere $S^{d-2}$ embedded in $\R^d$. We dyadically decompose the multiplier near the sphere $S^{d-2}$, and obtain sharp bounds for the dyadic pieces. In this approach, the Fourier restriction-extension operator (see Section \ref{sec:har_anal}) defined by $S^{d-2}$ naturally arises if we write the Fourier multiplier in the cylinderical coordinates in $\R^d$. 

 However, direct use of  the bounds on the restriction-extension operator as in \cite{jkl18} does not yield any sharp estimate when $k\ge2$. In such cases, the associated multiplier $m$ can be regarded to have singularity of degree $k$ on $S^{d-2}\times \{0\}$. So, the bounds on the Fourier restriction-extension operator, whose multiplier is of order $-1$, is not enough to handle the operator defined by $m$. Our novelty in this paper is in overcoming the difficulty by making use of  the Bochner--Riesz operators of indices less than $-1$ (Lemma \ref{lem_BR}). However, the Bochner--Riesz operators become  more singular. This necessitates manipulation of the associated distribution by raising and reducing the order of operators. See Lemmas \ref{l:order-increasing} and \ref{l:counter}, and \emph{Proof of Proposition \ref{p:tilde}}.

If we make use of Theorem \ref{t:carl} and adjust the argument in \cite{KRS87}, then we obtain the following unique continuation result.

\begin{theorem}\label{t:wucp}
Let $1\le k< \frac d2$, $\Omega$ a non-empty connected open set in $\R^d$, and let
\[	{\mathbf X}={\mathbf X}_{k,d}(\Omega):=
	\begin{cases}
	\bigcup_{p>1} W_{\mathrm{loc}}^{2k, p}(\Omega)  &\text{if} \ \  k\ge \frac{d-2}2, \\
	W_{\mathrm{loc}}^{2k, \frac{2(d-1)}{d+2k}}(\Omega) & \text{if} \ \  k<\frac{d-2}2.
	\end{cases}	\]
Then, for every $V\in L_{\mathrm {loc}}^{d/2k}(\Omega)$ the differential inequality \eqref{e:diff_ineq} has the UCP in $\mathbf X$.
\end{theorem}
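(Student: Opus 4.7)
The plan is to follow the Carleman-based strategy of Kenig--Ruiz--Sogge~\cite{KRS87}, with Theorem~\ref{t:carl} providing the necessary Carleman inequality in the polyharmonic setting. Let $u \in \mathbf{X}$ satisfy $|\Delta^k u|\le|Vu|$ in $\Omega$ and vanish on a nonempty open $U \subset \Omega$, and let $E := \{x \in \Omega : u \equiv 0 \text{ on some neighborhood of } x\}$; this is open, nonempty, and by the connectedness of $\Omega$ it suffices to show that $E$ is closed in $\Omega$. Arguing by contradiction, I fix $x_0 \in \partial E \cap \Omega$; then (after translation and rotation) I may assume $x_0 = 0$ and that there is a ball $B_0 := B(-r_0 e_d, r_0) \subset E$ with $0 \in \partial B_0$, so that $\partial B_0$ is tangent to $\{x_d = 0\}$ at the origin and $u \equiv 0$ on $B_0 = \{x_d \le -|x|^2/(2r_0)\}$. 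The aim is to derive a contradiction by showing $u \equiv 0$ on a neighborhood of $0$.

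I fix the exponent $p$ according to which case of the statement applies: for $k \ge (d-2)/2$, choose $p \in (1, d/(2k))$ with $u \in W^{2k,p}_{\mathrm{loc}}(\Omega)$ (possible by the definition of $\mathbf{X}$); for $k < (d-2)/2$, take $p = 2(d-1)/(d+2k)$. Let $q$ be determined by $\tfrac1p - \tfrac1q = \tfrac{2k}{d}$ (note $q < \infty$ since $k < d/2$), so that Theorem~\ref{t:carl} furnishes~\eqref{carl} with a constant $C$ independent of $v \in \R^d$. I pick a cutoff $\phi \in C_c^\infty(\R^d)$ with $\phi \equiv 1$ on $B(0, R/2)$, $\supp \phi \subset B(0, R)$, and $R > 0$ chosen --- by absolute continuity of the $L^{d/2k}$-norm --- so small that $C\|V\|_{L^{d/2k}(B(0, R))} \le 1/2$. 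Since $\phi u \in W^{2k, p}_0(\R^d)$, the Carleman inequality extends to it by density. Expanding $\Delta^k(\phi u) = \phi \Delta^k u + [\Delta^k, \phi] u$, bounding $|\Delta^k u|\le|Vu|$, and applying H\"older's inequality with exponents $\tfrac{d}{2k}$ and $q$ (matched to $p$ via the gap condition), I absorb the $\phi V u$-term to obtain
\begin{equation*}
\|e^{v \cdot x} \phi u\|_{L^q(\R^d)} \le 2C \|e^{v \cdot x} [\Delta^k, \phi] u\|_{L^p(\R^d)}, \qquad v \in \R^d.
\end{equation*}

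The commutator $[\Delta^k, \phi]$ is a differential operator of order $\le 2k-1$ whose coefficients are supported in the annulus $A := B(0, R)\setminus B(0, R/2)$; since $u$ and its distributional derivatives up to order $2k$ vanish on $B_0$, the support of $[\Delta^k, \phi] u$ lies in $A \setminus B_0 \subset A \cap \{x_d > -R^2/(2r_0)\}$, a thin slab once $R \ll r_0$. Taking $v = \tau \omega$ for a direction $\omega$ adapted to $\partial B_0$ at the origin and letting $\tau \to \infty$, the KRS argument compares the weight $e^{\tau \omega \cdot x}$ on a neighborhood of $0$ (where $u$ may not vanish) against its values on the commutator support $A \setminus B_0$, forcing $u \equiv 0$ on a neighborhood of $0$ and contradicting $0 \in \partial E$. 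I expect the main obstacle to be precisely this geometric limit: since the linear weight $e^{v \cdot x}$ has only hyperplane level sets, one must exploit the strict convexity of $\partial B_0$ at $0$ quantitatively, by taking $R$ small enough relative to $r_0$ that $A \setminus B_0$ is confined to an arbitrarily thin slab about $\{x_d = 0\}$ uniformly in $\tau$; once this localization is in hand, the standard argument from~\cite{KRS87} transplants directly, with Theorem~\ref{t:carl} replacing the uniform Sobolev inequality as the core analytic input.
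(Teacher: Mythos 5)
Your proposal follows the broad KRS strategy (absorb the potential term via H\"older in the Carleman inequality, then blow up the weight), but it omits the crucial inversion-type transform that the paper uses (Lemma~\ref{l:kelvin}), and the geometric argument you substitute in its place does not close.

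Concretely: you put the cutoff $\phi$ at the boundary point $0$, so $\supp\phi\subset B(0,R)$ and $\phi\equiv 1$ on $B(0,R/2)$, with $B_0=B(-r_0e_d,r_0)\subset E$ tangent to $\{x_d=0\}$ at $0$. Since $u\equiv 0$ on $B_0$, the commutator $[\Delta^k,\phi]u$ is supported in $(B(0,R)\setminus B(0,R/2))\cap\{x_d\ge -|x|^2/(2r_0)\}$, a set on which $x_d$ ranges all the way \emph{down} to $-R^2/(2r_0)=:-\alpha$ (achieved near $|x|=R$) and \emph{up} to about $R$. Contrary to your claim, $A\setminus B_0$ is not confined to a thin slab about $\{x_d=0\}$: the entire upper half of the annulus $A\cap\{x_d>0\}$ lies in $A\setminus B_0$ for any ratio $R/r_0$. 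More importantly, with the weight $e^{-\tau x_d}$ the commutator support reaches $x_d=-\alpha$, hence weight up to $e^{\tau\alpha}$, whereas the region near the origin where $u$ might be nonzero and $\phi\equiv 1$ (namely $B(0,R/2)\cap\supp u$, which lies above $\partial B_0$) satisfies $x_d\ge -|x|^2/(2r_0)\ge -\alpha/4$, so its weight is at most $e^{\tau\alpha/4}$. The sphere $\partial B_0$ curves \emph{away} from the hyperplane level sets of the weight as $|x|$ grows, so the commutator always wins this comparison and letting $\tau\to\infty$ yields no contradiction; the opposite choice $\omega=+e_d$ is worse, since then the upper part of $A$ carries weight $e^{\tau R}$. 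In short, the flat weight and the cutoff-at-the-tangent-point geometry cannot isolate a cap of the type the KRS scheme needs when the vanishing is on the \emph{convex} side of the interface.

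The paper resolves exactly this issue with the $(-\Delta)^k$-preserving inversion $\mathcal T_k$ of Lemma~\ref{l:kelvin}: in Lemma~\ref{lem_ucp} the continuation is first proved from the \emph{outer} annulus $A_+(\delta_1)$ inward, where the cap $S_\rho\cap B(0,1)$ is genuinely pinched inside $B(e_1,\delta_1/2)$ (the sphere curves in the favorable direction), and then the ``vanishing on the inner ball'' case --- which is exactly your situation --- is reduced to the former by applying $\mathcal T_k$ and checking that $V^*(x)=|x|^{-4k}V\circ\Phi(x)$ remains in $L^{d/2k}_{\mathrm{loc}}$. Without this transform (or some other replacement for it, such as a Carleman inequality with a convex weight, which Theorem~\ref{t:carl} does not provide), the contradiction you aim for cannot be extracted. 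The remaining ingredients in your write-up --- the choice of exponents $p,q$ from Theorem~\ref{t:carl}, density to pass from $C_0^\infty$ to $W_0^{2k,p}$, absorbing the potential by shrinking the support, and the connectedness/maximal-open-set reduction --- match the paper's proof and are fine.
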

When $k=1$, Koch and Tataru \cite{KT02} proved that the potential class $L_{\mathrm{loc}}^{d/2}$ is critical in the scale of Lebesgue spaces for \eqref{e:diff-ineq-1} to have the UCP; for every $r<d/2$ they constructed a nontrivial smooth function $u\in C_0^\infty(\R^d)$ such that $\Delta u/u \in L^r(\R^d)$. By following their construction, it is possible to show that for every $k\in[2, d/2)$ the $L_{\mathrm{loc}}^{d/2k}$-potentials are critical for \eqref{e:diff_ineq} to have the UCP in $C^\infty_0(\R^d)$ (\cite{JK22}). 
 
On the other hand, the problem of finding the largest possible solution space where the UCP holds is an interesting problem. That is, given a $V\in L_{\mathrm{loc}}^{d/2k}(\R^d)$, one can ask whether or not the integrability of the solution space $\mathbf X$ in Theorem \ref{t:wucp} is optimal for the UCP of \eqref{e:diff_ineq}. Although we obtain Theorem \ref{t:wucp} by means of the Carleman inequality \eqref{carl}, which is completely characterized in Theorem \ref{t:carl}, the condition \eqref{e:cond} itself does not give the sharpness of the integrability assumption of $\mathbf X$ in Theorem \ref{t:wucp}. Even when $k=1$, it is an open problem to find the smallest $p^\ast$ such that \eqref{e:diff-ineq-1} with $V\in L^{d/2}_{\mathrm{loc}}(\Omega)$ has the UCP in $W_{\mathrm{loc}}^{2,p}(\Omega)$ if $p>p^\ast$. Also, it seems an interesting open problem to consider the UCP of \eqref{e:diff_ineq} when $k\ge d/2$.

\subsubsection*{Organization} 
In Section \ref{sec:har_anal} we present some tools from harmonic analysis, namely, the estimates for the Bochner--Riesz operators of negative indices and their consequences. These are crucial in our argument obtaining the complete set of the Lebesgue exponents for which \eqref{carl} holds. In Section \ref{s:c_wucp} we prove Theorem \ref{t:carl}, and in Section \ref{s:wucp} we prove Theorem \ref{t:wucp}.  As a rigorous proof of an estimate which yields the necessity part in Theorem \ref{t:carl} is somewhat involved, we postpone the proof until the last section.

\subsubsection*{Notations}
For a set $A\subset \R^n$ we denote by $C_0^\infty(A)$ the class of smooth functions each of which can be extended to a smooth function defined on some open set $U$ containing $A$ and supported in $A$. Denoting by $\alpha\in \mathbb N_0^d$  a multi-index  we use the standard notation  
\[	W^{m,p}(\Omega)=\{f\in L^p(\Omega) \colon \|\partial^\alpha f \|_{L^p(\Omega)} <\infty, \, |\alpha|\le m \}	\]
for the usual Sobolev space. Moreover, $W^{m,p}_0(\Omega)$ denotes the closure of $C_0^\infty(\Omega)$ in $W^{m,p}(\Omega)$.  For $1\le p,q\le \infty$, we define
\[	\|T\|_{p\to q}=\sup\{\|Tf\|_{L^q(\R^d)}\colon f\in \mathcal S(\R^d), \ \|f\|_{L^p(\R^d)}=1 \}	\] 
for a linear operator  $T$ acting on the Schwartz class $\mathcal S(\mathbb R^d)$.  For the Fourier transform we follow the convention $\wh f(\xi)= \CF f(\xi) = \int_{\R^d} e^{-ix\cdot\xi} f(x)dx$, so the inverse Fourier transform is defined by $f^\vee(x)=\CF^{-1}f(x)=(2\pi)^{-d}\CF f(-x)$. For a bounded function $m$ we denote by $m(D)$ the associated Fourier multiplier operator, that is, $m(D)f=\CF^{-1}(m\CF f)$. We slightly abuse notation so that the dimension of the Fourier transform and its inversion may vary depending on context.

\section{The Bochner--Riesz operator of negative index}\label{sec:har_anal}
Let us recall from \cite[Chapter III]{Hor-book} the analytic family of distributions $\chi_+^a \in \mathcal D'(\R)$ defined by
\[	\chi_+^a = \frac{x_+^a}{\Gamma(a+1)}, \ \   \mathrm{Re\,}a>-1.	\]
$\chi_+^a$ can be continued analytically to all $a\in \C$ so that $d\chi_+^a /dx =\chi_+^{a-1}$. Since $\chi_+^0$ is the Heaviside function it follows that $\chi_+^{-k}=\delta_0^{(k-1)}$ for $k\in \mathbb N$.  

Let us denote by $\lambda$ the smooth function from $\R^n\setminus\{0\}$, $n\ge2$,  defined by $\lambda(\theta)=1-|\theta|^2.$	Then the Bochner--Riesz operator $T_\alpha$ on $\R^n$ of (negative) index $-\alpha\in [-\frac{n+1}2,0)$ is the Fourier multiplier operator defined by 
\[	T_\alpha f = \CF^{-1} \big( (\lambda^\ast \chi_+^{-\alpha} ) \CF f\big), \]
where $\lambda^\ast \chi_+^{-\alpha} \in \mathcal D'(\R^n\setminus\{0\})$ is the pullback of $\chi_+^{-\alpha}$ via the function $\lambda$ (see \cite[Chapter VI]{Hor-book}). In particular, denoting by $d\sigma_{S^{n-1}}$ the surface measure on the unit sphere $S^{n-1}$ we have
\begin{equation}\label{e:sp_msr}
	\lambda^\ast \chi_+^{-1}= \lambda^\ast \delta_0 = c\, d\sigma_{S^{n-1}}
\end{equation}
for some constant $c$ depending on $n$. Hence  $T_1$ is the Fourier restriction-extension operator on the sphere $S^{n-1}$. 

The $L^p - L^q$ boundedness of $T_\alpha$ with $\alpha\in (0,\frac{n+1}2]$ was studied by several authors (\cite{Bor86, CS88, BMO, Ba97, Gu00, CKLS, KL19}). We will make use of it to prove the Carleman inequality \eqref{carl} for all admissible $p,q$ in the following section. Here, let us for the moment digress and introduce some convenient notations to describe the set of $p,q$ for which $\|T_\alpha\|_{p\to q}$ is bounded. 

We set $\mathbf{Q} =[0, 1]\times [0,1]\subset \R^2$, and for $(x,y) \in \mathbf{Q}$ we define $(x,y)' =(1-y, 1-x)$. Similarly, for $\mathcal R\subset \mathbf{Q}$ we denote $\mathcal R' =\{ (x,y)\in \mathbf{Q} \colon (x,y)' \in \mathcal R \}$. For $X_1, \ldots, X_m\in \mathbf{Q}$, we use the notation $[X_1, \ldots, X_m]$ to denote the convex hull of the points $X_1, \ldots, X_m$. In particular, if  $X, Y\in \mathbf{Q}$,  $[X,Y]$  is  the closed line segment  connecting $X$ and $Y$ in $\mathbf{Q}$. By $(X,Y)$ and $[X,Y)$ we denote the open interval $[X,Y]\setminus\{X,Y\}$ and the half-open interval $[X,Y]\setminus\{Y\}$, respectively. Furthermore, for $d\ge 3$ and $0<\alpha<d/2$, let us set 
\begin{equation}\label{e:ptBD}
	B_\alpha^d = \big(\tfrac{d-2+2\alpha}{2(d-1)}, \tfrac{(d-2)(d-2\alpha)}{2d(d-1)} \big), \ D^d_\alpha=\big(\tfrac{d-2+2\alpha}{2(d-1)}, 0 \big), \  \text{and} \  H=(1,0).
\end{equation}
See Figures \ref{fig1} and \ref{fig2}.

We resume our discussion of the operator $T_\alpha$. It was shown by B\"orjeson \cite{Bor86} that $\|T_\alpha\|_{L^p(\R^n)\to L^q(\R^n)}<\infty$ only if $(\frac1p, \frac1q)$ lies in the set
\begin{align*}
\mathcal P_{\alpha}^{n+1} 
	&:= [B_{\alpha}^{n+1}, (B_{\alpha}^{n+1})' , D_{\alpha}^{n+1}, (D_{\alpha}^{n+1})', H] \setminus \big( [B_{\alpha}^{n+1}, D_{\alpha}^{n+1}] \cup [B_{\alpha}^{n+1}, D_{\alpha}^{n+1}]' \big)\\
	& = \big\{(x,y)\in \mathbf{Q} \colon x-y\ge \tfrac{2\alpha}{n+1}, \, x>\tfrac{n-1+2\alpha}{2n}, \, y < \tfrac{n+1-2\alpha}{2n} \big\}.
\end{align*}
When $n=2$ the sufficiency was proved by Bak \cite{Ba97} for $0<\alpha\le 3/2$. In higher dimensions, when $0<\alpha <\alpha^\ast$ for some $\alpha^\ast<1/2$, the complete characterization of $L^p- L^q$ boundedness of $T_\alpha$ still remains open \cite{BMO, CKLS, KL19}. For more on this problem we refer the reader to \cite{KL19}, where the currently widest range of $\alpha$ can be found. We also mention the recent result \cite{MS21} in which new Bochner--Riesz estimates with negative index associated to non-elliptic surfaces were proven.

In this paper, we particularly make use of the following  estimates  for $T_k$ with $k\in [1,\frac{n+1}2)\cap \mathbb N$, which are due to Guti\'errez \cite{Gu00}, Bak \cite{Ba97}, and Cho--Kim--Lee--Shim \cite{CKLS}:
\begin{align}
	\|T_k f\|_{L^{q}(\R^n)} &\lesssim \|f\|_{L^{p}(\R^n)} \ \ \text{for} \ \  (\tfrac1p,\tfrac1q)\in \mathcal P_k^{n+1}, \label{e:st-BR} \\
	\|T_k f\|_{L^q (\R^n)} &\lesssim \|f\|_{L^{p, 1}(\R^n)} \ \ \text{for} \ \  (\tfrac1p,\tfrac1q)\in (B_{k}^{n+1}, D_{k}^{n+1}],\label{e:endpt-BR1}\\
	\|T_k f\|_{L^{q, \infty}(\R^n)} &\lesssim \|f\|_{L^{p, 1}(\R^n)} \ \ \text{for} \ \  (\tfrac1p,\tfrac1q)=B_{k}^{n+1}.  \label{e:endpt-BR}
\end{align}

In order to prove the Carleman inequality \eqref{carl} for \emph{all} $p,q$ described in Theorem \ref{t:carl}, we make use of the estimates \eqref{e:st-BR}--\eqref{e:endpt-BR}; see Lemma \ref{lem_BR}.  To this end, we relate the multiplier $\lambda^\ast \chi_+^{-k}$ to $\lambda^\ast \chi_+^{-1}$. We use the standard notation $\mathcal E'(\R^n\setminus \{0\})$ denoting the set of distributions compactly supported in $\R^n\setminus \{0\}$. It is clear that $\lambda^\ast \chi_+^{-k}$ is supported on $S^{n-1}$.
\begin{lemma}\label{l:order-increasing}
Let $n\ge 2$ and $k\ge 1$ be integers, and let $\langle\cdot, \cdot\rangle$ be the duality pairing on $\mathcal E'(\R^{n}\setminus \{0\})\times C^\infty(\R^{n}\setminus \{0\})$. Then, for $\phi\in C^\infty(\R^n\setminus\{0\})$ we have
\begin{equation}\label{e:dist-id}
	\langle\lambda^\ast \chi_+^{-k}, \phi \rangle = \langle \lambda^\ast\chi_+^{-1}, L^{k-1}\phi \rangle,
\end{equation}
where $L$ is a differential operator defined by 
\[	L\phi (\theta) = \frac1{2|\theta|^2} ( n-2 + \theta\cdot\nabla)\phi (\theta), \ \  \theta\in \R^n\setminus\{0\}. \]
\end{lemma}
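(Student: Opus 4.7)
The plan is to realize $\lambda^\ast\chi_+^{-k}$ as $X^{k-1}$ applied to $\lambda^\ast\chi_+^{-1}$ for a suitably chosen first-order differential operator $X$ on $\R^n\setminus\{0\}$, and then transfer $X^{k-1}$ onto $\phi$ through the duality pairing; the resulting formal adjoint $(X^\ast)^{k-1}$ should match $L^{k-1}$ on the nose.

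To set this up, I would first recall from $d\chi_+^a/dx = \chi_+^{a-1}$ that $\chi_+^{-k} = (d/dt)^{k-1}\chi_+^{-1}$ in $\mathcal D'(\R)$. The next step is to find a smooth vector field $X$ on $\R^n\setminus\{0\}$ satisfying $X(\lambda^\ast\mu) = \lambda^\ast(\mu')$ for every $\mu\in\mathcal D'(\R)$; by the chain rule on smooth $\mu$ this reduces to requiring $X\lambda \equiv 1$. Since $\nabla\lambda(\theta) = -2\theta$ is nonzero on $\R^n\setminus\{0\}$, the natural choice is
\[
X = -\frac{1}{2|\theta|^2}\,\theta\cdot\nabla.
\]
Because $\lambda$ is a submersion on $\R^n\setminus\{0\}$, pullback is sequentially continuous from $\mathcal D'(\R)$ to $\mathcal D'(\R^n\setminus\{0\})$, so the identity extends from smooth $\mu$ to all distributions by approximation. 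Iterating yields $\lambda^\ast\chi_+^{-k} = X^{k-1}(\lambda^\ast\chi_+^{-1})$ in $\mathcal E'(\R^n\setminus\{0\})$.

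The final step is to transpose $X^{k-1}$ across the pairing. Writing $X = \sum_j a_j \partial_j$ with $a_j = -\theta_j/(2|\theta|^2)$, the formal adjoint with respect to Lebesgue measure is $X^\ast\phi = -X\phi - (\mathrm{div}\,X)\,\phi$, and a short calculation gives $\mathrm{div}\,X = -(n-2)/(2|\theta|^2)$. Collecting terms yields
\[
X^\ast\phi = \frac{1}{2|\theta|^2}\bigl((n-2)\phi + \theta\cdot\nabla\phi\bigr) = L\phi,
\]
and $(k-1)$-fold transposition then gives $\langle X^{k-1}(\lambda^\ast\chi_+^{-1}), \phi\rangle = \langle \lambda^\ast\chi_+^{-1}, L^{k-1}\phi\rangle$, which is exactly \eqref{e:dist-id}.

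The only delicate point is justifying the distributional identity $X(\lambda^\ast\mu) = \lambda^\ast(\mu')$; this relies on the continuity of pullback under submersions (see e.g.\ H\"ormander, Vol.\ I, \S 6.1), after which the remaining pieces---verifying $X\lambda = 1$, computing $\mathrm{div}\,X$, and identifying $X^\ast$ with $L$---are routine calculus exercises.
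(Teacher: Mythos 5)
Your proposal is correct and follows essentially the same route as the paper: both proofs rest on the chain rule for pullback by the submersion $\lambda$ (which gives $\theta\cdot\nabla(\lambda^\ast\chi_+^{-j}) = -2|\theta|^2\,\lambda^\ast\chi_+^{-j-1}$, i.e.\ $X(\lambda^\ast\mu)=\lambda^\ast\mu'$) and then transpose the resulting first-order operator, whose formal adjoint is $L$. The paper reduces the order one step at a time and iterates, whereas you apply $X^{k-1}$ in one shot and transpose at the end; the computations of $\operatorname{div}X$ and $X^\ast=L$ are the same.
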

\begin{proof}
It is enough to prove \eqref{e:dist-id} for $k\ge2$. We denote by $\{\partial_j=\frac{\partial}{\partial \theta_j} \colon 1\le j\le n\}$ the standard orthonormal frame on $\R^n$. The chain rule (see, e.g., \cite[p. 135]{Hor-book}) gives 
\[	\sum_{j=1}^n\theta_j \partial_j \lambda^\ast \delta_0^{(k-2)} = \lambda^\ast \delta_0^{(k-1)} \sum_{j=1}^n\theta_j\partial_j\lambda = -2|\theta|^2 \lambda^\ast \delta_0^{(k-1)}.	\]
Hence,
\begin{align*}
\langle \lambda^\ast\chi_+^{-k}, \phi \rangle 
&= \langle \lambda^\ast \delta_0^{(k-1)}, \phi \rangle 
= -\tfrac1{2}\sum_{j=1}^n \big \langle \tfrac{\theta_j}{|\theta|^2} \partial_j \lambda^\ast \delta_0^{(k-2)} , \phi \big \rangle \\
&= \big \langle \lambda^\ast \delta_0^{(k-2)}, \tfrac12 \sum_{j=1}^n \partial_j \big(\tfrac{\theta_j\phi}{|\theta|^2} \big) \big\rangle 
= \langle \lambda^\ast \chi_+^{-(k-1)}, L \phi \rangle.
\end{align*}
Repeating this argument we get the identity \eqref{e:dist-id}. 
\end{proof}

For $\rho>0$ let $\lambda_\rho(\theta):=\rho^2-|\theta|^2$. Since $\partial_j\lambda_\rho=\partial_j\lambda$ the proof of Lemma \ref{l:order-increasing} gives
\begin{equation}\label{e:dist-id2}
	\langle\lambda_\rho^\ast \chi_+^{-k}, \phi \rangle = \langle \lambda_\rho^\ast\chi_+^{-1}, L^{k-1}\phi \rangle.
\end{equation}
We denote by $T_{k, \rho}$ the Bochner--Riesz operator of which multiplier is the pullback of $\chi_+^{-k}$ via $\lambda_\rho$ instead of $\lambda$, that is, 
\[	T_{k,\rho}f=\mathcal F^{-1}\big( (\lambda_\rho^\ast\chi_+^{-k}) \mathcal F f \big).	\]
The estimates \eqref{e:st-BR}--\eqref{e:endpt-BR} also hold for $T_{k,\rho}$ with bounds $O(\rho^{\frac np-\frac nq-2k})$. 

When a test function $\phi(\theta, \tau, y)$ depends on several parameters including the variable $\theta$, we write $\langle T, \phi\rangle = \langle T, \phi\rangle_\theta$ to clarify that a distribution $T$ acts on the function $\theta\mapsto \phi(\theta, \tau, y)$.
\begin{lemma}\label{lem_BR}
Let $d$ and $k$ be positive integers such that $1\le k < d/2$ and let $\phi\in C_0^\infty([-2,2])$. If $(\frac1p,\frac1q)\in \mathcal P_k^d \cup [B_{k}^d, D_{k}^d]$, then 
\begin{equation} \label{e:cBR}
	\bigg\| \int_\R \phi(\tau) e^{it\tau}\big\langle \lambda_\rho^* \chi_+^{-k}, \widehat f(\theta,\tau) e^{iy\cdot \theta}\big\rangle_\theta d\tau\bigg\|_{L^{q,\infty}_x(\R^d)}\lesssim \|\phi\|_{C^2} \|f\|_{L^{p,1}(\R^d)}
\end{equation}
uniformly in $\rho\sim 1$. Here $x=(y,t)$ and $(\theta, \tau)$ denote the spatial and frequency variables, respectively, in $\R^{d-1}\times \R$. In fact, if  $(\frac1{p},\frac1{q})\in (B_{k}^d, D_{k}^d]$, then the $L^{q,\infty}$ in \eqref{e:cBR} can be replaced with $L^q$. Furthermore, if $(\frac1{p},\frac1{q})\in \mathcal P_k^d$, then the stronger $L^p-L^q$ estimate holds. 
\end{lemma}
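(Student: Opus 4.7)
\emph{Proof proposal.} The plan is to recognize the left-hand side of \eqref{e:cBR} as the composition of the Bochner--Riesz operator $T_{k,\rho}$ applied in the $y$-variable with a smoothing convolution in the $t$-variable, and then to combine the bounds \eqref{e:st-BR}--\eqref{e:endpt-BR} on $T_{k,\rho}$ with Minkowski's integral inequality and Young's convolution inequality. Writing $F_\tau(y) := \int_\R e^{-it\tau} f(y,t)\,dt$ for the partial Fourier transform of $f$ in $t$, one has $\widehat f(\theta,\tau) = \widehat{F_\tau}(\theta)$, so the inner distributional pairing equals, up to a fixed constant, $T_{k,\rho} F_\tau(y)$. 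Consequently the operator on the left-hand side of \eqref{e:cBR} reads, up to a multiplicative constant,
\[
Tf(y,t) := \int_\R \phi(\tau) e^{it\tau}\, T_{k,\rho} F_\tau(y)\, d\tau = \bigl(\check\phi *_t T_{k,\rho,y} f\bigr)(y,t),
\]
where $T_{k,\rho,y}$ is $T_{k,\rho}$ applied in the $y$-variable alone (with $t$ as parameter) and $\check\phi$ is the inverse Fourier transform of $\phi$ on $\R$. Since $\phi\in C_0^\infty([-2,2])$, one has $\|\check\phi\|_{L^r(\R)}\lesssim \|\phi\|_{C^2}$ for every $r\in[1,\infty]$.

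For $(\tfrac1p,\tfrac1q)\in\mathcal P_k^d$, estimate \eqref{e:st-BR} yields $\|T_{k,\rho,y}f(\cdot,t)\|_{L^q_y}\lesssim \|f(\cdot,t)\|_{L^p_y}$, uniformly in $t$ and in $\rho\sim1$. The gap condition on $\mathcal P_k^d$ forces $1\le p\le q<\infty$, so Minkowski's integral inequality gives
\[
\|T_{k,\rho,y}f\|_{L^q_y(L^p_t)} \le \|T_{k,\rho,y}f\|_{L^p_t(L^q_y)} \lesssim \|f\|_{L^p(\R^d)}.
\]
Applying Young's convolution inequality pointwise in $y$ with any $r\ge 1$ satisfying $1+\tfrac1q = \tfrac1r+\tfrac1p$, and then taking the $L^q_y$ norm, produces the strong-type bound $\|Tf\|_{L^q(\R^d)}\lesssim \|\phi\|_{C^2}\|f\|_{L^p(\R^d)}$. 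For $(\tfrac1p,\tfrac1q)\in(B_k^d,D_k^d]$ I would repeat this scheme using \eqref{e:endpt-BR1} in place of \eqref{e:st-BR} and replacing $L^p_y$ by $L^{p,1}_y$ on the input side (with the Lorentz-space versions of Minkowski and Young) to get the $L^{p,1}\to L^q$ bound. At the endpoint $(\tfrac1p,\tfrac1q) = B_k^d$ the analogous scheme starting from the restricted weak-type bound \eqref{e:endpt-BR} yields the $L^{p,1}\to L^{q,\infty}$ bound.

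The interior strong-type case is a routine Fubini/Minkowski/Young combination. The main obstacle is the rigorous verification of the Lorentz-space Minkowski inequality on mixed-norm spaces used at the boundary, particularly at the restricted-weak-type endpoint $B_k^d$; some care is needed to ensure that one does land on the target pair $L^{p,1}\to L^{q,\infty}$ rather than on a strictly weaker endpoint estimate. As a fallback one can perform real interpolation between strong-type pairs on $\mathcal P_k^d$ clustering towards the boundary point of interest. Uniformity of all constants in $\rho\sim 1$ is automatic from the scaling behaviour of $T_{k,\rho}$ noted after \eqref{e:dist-id2}.
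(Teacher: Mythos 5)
Your reduction of the left-hand side to $\check\phi *_t T_{k,\rho,y}f$ is exactly the same identity as \eqref{e:dual_BR} in the paper, and for the interior strong-type range $\mathcal P_k^d$ and the half-open segment $(B_k^d,D_k^d]$ your Young--then--Minkowski scheme is a correct (if slightly reordered) version of the paper's argument. The genuine gap is at the corner $B_k^d$, which is the case the paper actually proves in detail and the one that makes the lemma nontrivial. You yourself flag the difficulty and offer as a fallback "real interpolation between strong-type pairs on $\mathcal P_k^d$ clustering towards the boundary." That fallback cannot work: $B_k^d\notin\mathcal P_k^d$ (it sits on the excluded boundary of the restricted-strong-type region), and real interpolation between pairs strictly inside an open convex region never produces a restricted weak-type bound at a limit point of its boundary. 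So the endpoint $L^{p,1}\to L^{q,\infty}$ is left unproven.

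The reason the paper does not hit this obstruction is that it orders the mixed-norm steps differently, and this ordering is the whole point. Rather than applying Young in $t$ pointwise in $y$ and then trying to pass a Lorentz outer norm through a convolution, the paper first uses the elementary comparisons in \eqref{e:easy}, namely $\|\cdot\|_{L^{q,\infty}_x}\le\|\cdot\|_{L^q_tL^{q,\infty}_y}$ and $\|\cdot\|_{L^p_tL^{p,1}_y}\le\|\cdot\|_{L^{p,1}_x}$, so that the Lorentz norms appear only on the $y$-variable and on the full space $x$. Then Minkowski's integral inequality is applied to the $L^{q,\infty}_y$ quasi-norm (legitimate since $L^{q,\infty}$ is normable for $q>1$), which pulls the $t$-convolution outside; Young's inequality in $t$ goes from $L^p_s$ to $L^q_t$; and \eqref{e:endpt-BR} is invoked pointwise in $s$. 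No "Lorentz-space Minkowski on a mixed-norm space" is needed. To repair your argument, replace the order (Young in $t$, then outer $y$-norm) with (inner $L^{q,\infty}_y$-norm via the first inequality of \eqref{e:easy}, then Minkowski in $y$, then Young in $t$, then the second inequality of \eqref{e:easy}); this simultaneously handles all three cases with the appropriate input bound on $T_{k,\rho}$.
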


\begin{proof}
We only prove the restricted weak type estimate \eqref{e:cBR}. The stronger estimates for $(\frac1{p},\frac1{q})\in P_k^d \cup (B_{k}^d, D_{k}^d]$ follow similarly by using the estimates \eqref{e:st-BR} or \eqref{e:endpt-BR1} in place of \eqref{e:endpt-BR}. 

We note 
\[	\int \phi(\tau) e^{it\tau} \big\langle \lambda_\rho^* \chi_+^{-k}, \widehat f(\theta,\tau) e^{iy\cdot \theta} \big\rangle_\theta d\tau 
	= 2\pi \int  \phi^{\vee} (t-s) \big\langle \lambda_\rho^* \chi_+^{-k}, \mathcal F( f(\cdot,s))(\theta) e^{iy\cdot \theta} \big\rangle_\theta ds, 	\]
where $\mathcal F$ denotes the $(d-1)$-dimensional Fourier transform. Hence 
\begin{equation}\label{e:dual_BR}
	\int \phi(\tau) e^{it\tau} \big\langle \lambda_\rho^* \chi_+^{-k}, \widehat f(\theta,\tau) e^{iy\cdot \theta} \big\rangle_\theta d\tau= (2\pi)^d \int  \phi^{\vee} (t-s) T_{k, \rho} (f(\cdot, s))(y) ds, 
\end{equation}
where $T_{k, \rho}$ is the $d-1$ dimensional Bochner--Riesz operator of index $-k$.  We also use the following simple inequalities (see, e.g., \cite[p. 780]{jkl18})
\begin{equation}\label{e:easy}
 \|f\|_{L^{q,\infty}_x}\le \|f\|_{L^q_tL^{q,\infty}_y}\quad \text{and} \quad \|f\|_{L^p_tL^{p,1}_y}\le \|f\|_{L^{p,1}_x}.
\end{equation}

Making use of the equality \eqref{e:dual_BR}, the first inequality in \eqref{e:easy}, and Minkowski's inequality, 
\[	\bigg\| \int_\R \phi(\tau) e^{it\tau}\big\langle \lambda_\rho^* \chi_+^{-k}, \widehat f(\theta,\tau) e^{iy\cdot \theta}\big\rangle_\theta d\tau\bigg\|_{L^{q,\infty}_x} \lesssim \bigg\| \int_\R \phi^\vee(t-s) \| T_{k, \rho}(f(\cdot, s))\|_{L^{q,\infty}_y}ds \bigg\|_{L^{q}_t}.	\]
Applying Young's inequality, the estimate \eqref{e:endpt-BR} with $n=d-1$, and the second inequality in \eqref{e:easy}, we see that  
\[	\bigg\| \int_\R \phi^\vee(t-s) \| T_{k, \rho}(f(\cdot, s))\|_{L^{q,\infty}_y}ds \bigg\|_{L^{q}_t} 
\lesssim \| \phi^\vee\|_{L^r} \| f\|_{L^{p}_tL^{p,1}_y}\le  \| \phi^\vee\|_{L^r} \| f\|_{L^{p,1}_x}	\]
for $r\in [1,\infty]$ satisfying $1+\frac1{q}=\frac1{p} +\frac1r.$  Since $\supp\phi\subset[-2,2]$, we have $\|\phi^\vee\|_r\lesssim \|\phi\|_{C^2}$ for every $r\in [1,\infty]$. Therefore, we obtain \eqref{e:cBR}.  
\end{proof}

\section{Carleman inequalities} \label{s:c_wucp}
Let us start with three simple observations. First, if $v=0$ then \eqref{carl} is the Hardy--Littlewood--Sobolev inequality, which is valid if and only if  $1<p,q<\infty$ satisfy the gap condition
\begin{equation}\label{e:gap}
\tfrac1p-\tfrac1q=\tfrac{2k}d.
\end{equation} 
Hence it is enough to consider $v\neq 0$. Secondly, since the estimate \eqref{carl} is invariant under rotations we may assume $v=|v|e_d$. Thirdly, by scaling it is easy to see that the gap condition \eqref{e:gap} is necessary for \eqref{carl} to hold.

Under the condition \eqref{e:gap}, by rescaling the Carleman inequality \eqref{carl} (with nonzero $v$) is equivalent to the following estimate for a single Fourier multiplier operator:
\begin{equation}\label{main}
\bigg \|\mathcal F^{-1} \bigg( \frac{ \widehat{f}(\xi) }{(|\xi|^2+2i \xi_d -1)^k}\bigg)\bigg\|_{L^q(\R^d)} 
 \le C \| f\|_{L^p(\R^d)}.
 \end{equation}

We first make a decomposition in $\xi_d$. Let $\psi\in C_0^\infty([-2,-1/2]\cup[1/2,2])$ be a nonnegative even function such that $\sum_{j\in\Z} \psi(2^{-j}t) =1$ for $t\neq0$, and let $\psi_0(t)=1-\sum_{j\ge 1}\psi(2^{-j}t)$. We write $\xi =(\eta,\tau)\in \mathbb R^{d-1}\times \mathbb R$ and, fixing a small positive dyadic number $\ep_\circ\le 2^{-5}$, set $\mathbb D:=\{2^\nu \in (0,\ep_\circ] \colon \nu \in \mathbb Z \}$. Then, we break 
\begin{equation}\label{e:prim_decom}
	\frac{1}{(|\eta|^2-1+\tau^2+2i \tau)^k} = m_{L}(\eta,\tau) + m_{G}(\eta, \tau)
\end{equation}
where
\begin{equation} \label{e:mtau}
	 m_L(\eta,\tau) = \sum_{\ep\in\mathbb D}m_\ep (\eta, \tau) := \sum_{\ep\in\mathbb D} \frac{\psi_0(\ep_\circ^{-1}(1-|\eta|^2))\psi(\ep^{-1}\tau)}{(|\eta|^2-1+\tau^2 + 2i \tau)^k} .
\end{equation}
Since $m_G$ vanishes in a neighborhood of the set $S^{d-2}\times\{0\}$ where the original multiplier is singular, it is clear that 
\[	|\partial^\alpha_\xi \big (|\xi|^{2k} m_G(\xi) \big)|\lesssim |\xi|^{-|\alpha|}	\] 
for every $\alpha\in \mathbb N_0^d$.  Thus, the Mikhlin multiplier theorem and the Hardy--Littlewood--Sobolev inequality yield the inequalities
\[	\|m_G(D)f\|_{L^q(\mathbb R^d)} \lesssim \|\mathcal F^{-1}(|\cdot|^{-2k}\widehat f\,)\|_{L^q(\mathbb R^d)} \lesssim \|f\|_{L^p(\mathbb R^d)}	\]
for all $p,q$ satisfying $1< p\le q< \infty$ and \eqref{e:gap}. Thus, the remaining task is to clarify $L^p-L^q$ boundedness of $m_L(D)$.

\subsection{Estimates for localized frequencies}
In the summation \eqref{e:mtau}, if we rescale each term $m_\ep$ by $\tau\to \ep\tau$, then 
\begin{equation}\label{e:scale}
	\|m_\ep(D)\|_{p\to q}= \ep^{\frac1p-\frac1q}\|\wt m_\ep(D) \|_{p\to q},
\end{equation} 
where
\begin{equation}\label{e:msc}
	\wt{m}_\ep(\eta, \tau)= m_\ep(\eta, \ep\tau)= \frac{ \psi_0(\ep_\circ^{-1}(1-|\eta|^2)) \psi(\tau)}{(|\eta|^2-1+ \ep^2\tau^2 + 2i \ep\tau)^k} .
\end{equation}
Hence, it is enough to study $L^p-L^q$ boundedness of $\wt m_\ep(D)$ instead of $m_\ep(D)$, which is the main result in this section (see Proposition \ref{p:tilde}).

Let us set $C= (\frac12, 0 )$ and  
\[	A^d = \big(\tfrac12, \tfrac{d-2}{2d} \big) \ \ \text{for} \ \  d\ge3.	\]
For a positive integer $k$ such that $k<d/2$, we define
\begin{align*}
\mathcal T^d_k
&=[A^d, B^d_k, C, D^d_k]\setminus[B^d_k, D^d_k] \\
&=\big\{ (x,y)\in \mathbf{Q}\colon \tfrac12\le x<\tfrac{d-2+2k}{2(d-1)}, \, 0\le y\le \tfrac{d-2}d(1-x) \big\}.
\end{align*}
See Figures \ref{fig1} and \ref{fig2}.

\begin{prop}\label{p:tilde}
Let $\ep\in \mathbb D$. If $(\frac1p,\frac1q)\in\mathcal T^d_k,$ then 
\begin{equation}\label{e:tilde}
    \|\wt m_\ep(D)\|_{p\to q} \lesssim \ep^{\frac{d-1}p-\frac{d-2+2k}2}.
\end{equation}
\end{prop}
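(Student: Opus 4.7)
The plan is to establish the bound at the four extremal points of $\mathcal T_k^d$---namely $A^d$, $B_k^d$, $C$, $D_k^d$---and interpolate into the interior. At $A^d$ and $C$ (with $\frac1p=\frac12$) the target reduces to $\ep^{(1-2k)/2}$, while at $B_k^d$ and $D_k^d$ (with $\frac1p=\frac{d-2+2k}{2(d-1)}$) it becomes the uniform bound $O(1)$. Accordingly, the proof splits into routine $L^2$-based estimates at the first pair and a sharp application of Lemma \ref{lem_BR} at the second pair.

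Rewrite the denominator of $\wt m_\ep$ as $-\lambda_\rho(\eta)+2i\ep\tau$ with $\rho=\rho(\tau)=\sqrt{1-\ep^2\tau^2}$, and dyadically decompose $\wt m_\ep=\sum_{\sigma\in\mathbb D,\,\sigma\ge\ep}\wt m_{\ep,\sigma}$, where $\wt m_{\ep,\sigma}$ localizes $|\lambda_\rho(\eta)|$ to the dyadic scale $\sigma$ and $\wt m_{\ep,\ep}$ to $|\lambda_\rho|\lesssim\ep$. On the support of $\wt m_{\ep,\sigma}$ the denominator satisfies $|-\lambda_\rho+2i\ep\tau|\gtrsim\max(\sigma,\ep)$, so $|\wt m_{\ep,\sigma}|\lesssim\max(\sigma,\ep)^{-k}$, while the $(d-1)$-volume of the $\eta$-slab is $\sim\sigma$. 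Plancherel then gives $\|\wt m_{\ep,\sigma}\|_{L^2}^2\lesssim\sigma\,\max(\sigma,\ep)^{-2k}$; summing the disjointly-supported pieces over $\sigma\ge\ep$ yields $\|\wt m_\ep\|_{L^2}\lesssim\ep^{(1-2k)/2}$, and hence, via $\|g\|_\infty\le(2\pi)^{-d}\|\hat g\|_1$ and Cauchy--Schwarz, the desired bound at $C$. A Stein--Tomas-type extension estimate applied to each slab upgrades this to the endpoint $A^d$.

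For the sharp endpoints $B_k^d$ and $D_k^d$ I aim to identify $\wt m_{\ep,\sigma}$ with a smoothed Bochner--Riesz distribution of index $-k$ and invoke Lemma \ref{lem_BR}. Using the boundary-value formula
\[
\lim_{\delta\to 0^+}\frac{1}{(-\mu+i\delta)^k}=\mathrm{p.v.}\,\frac{1}{(-\mu)^k}-i\pi\,\chi_+^{-k}(-\mu),
\]
for $\ep>0$ the multiplier $(-\lambda_\rho+2i\ep\tau)^{-k}$ is a smoothing at scale $\max(\sigma,\ep)$ of $\lambda_\rho^*\chi_+^{-k}$, plus a principal-value term controlled by a standard Mikhlin--H\"ormander argument on the slab. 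Concretely, I would express the singular part of $\wt m_{\ep,\sigma}$ as an integral $\int\Psi_{\ep,\sigma}(\nu,\tau)\,\lambda_{\rho_\nu}^*\chi_+^{-k}(\eta)\,d\nu$ over shifted radii $\rho_\nu$ (close to $1$), with a smooth test function $\Psi_{\ep,\sigma}$ of uniformly bounded $C^2$ norm in $\tau$. Applying Lemma \ref{lem_BR} to each $\nu$-slice and integrating in $\nu$ then yields $\|\wt m_{\ep,\sigma}(D)\|_{p\to q}\lesssim\sigma^{(d-1)/p-(d-2+2k)/2}$ at $B_k^d$ and $D_k^d$ (restricted weak-type at the former). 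Summing over $\sigma\ge\ep$ collapses to the smallest scale, and real interpolation with the $L^2$-corner estimates then produces the stated strong-type bound throughout $\mathcal T_k^d$.

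The main obstacle is the faithful realization of $\wt m_{\ep,\sigma}$ as a superposition of Bochner--Riesz distributions that exactly matches the template of Lemma \ref{lem_BR}, a task complicated by the fact that $\rho(\tau)=\sqrt{1-\ep^2\tau^2}$ depends on $\tau$ whereas Lemma \ref{lem_BR} fixes $\rho$. The identity \eqref{e:dist-id2} and the first-order operator $L$ from Lemma \ref{l:order-increasing} are built to transfer $k-1$ derivatives off the singular distribution and onto the test function, thereby bringing the higher-order case within range of the restriction-extension machinery used in \cite{jkl18} for $k=1$; the forthcoming Lemma \ref{l:counter}---whose complementary role is signaled in the introduction---presumably performs the inverse ``lowering'' manipulation needed to close the argument at the sharp endpoint.
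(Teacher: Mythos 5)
Your architecture matches the paper's at a high level---dyadic decomposition near $S^{d-2}$ at scales between $\ep$ and $\ep_\circ$, $L^2$-type bounds at the $\frac1p=\frac12$ corners, a sharp restricted weak-type bound tied to the Bochner--Riesz estimates at the $[B_k^d,D_k^d]$ edge, interpolation, and a geometric sum. The $L^2$ portion (size $\times$ measure + Plancherel for $C$, Stein--Tomas on $\eta$-slabs for $A^d$) is essentially what the paper does in Lemma~\ref{l:L2}, and is fine.

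The real gap is at the sharp endpoint $[B_k^d,D_k^d]$. You propose to realize $\wt m_{\ep,\sigma}$ as $\int\Psi_{\ep,\sigma}(\nu,\tau)\,\lambda_{\rho_\nu}^*\chi_+^{-k}\,d\nu$ with $\|\Psi_{\ep,\sigma}(\nu,\cdot)\|_{C^2}$ uniformly bounded, using the boundary-value formula for $(-\mu+i\delta)^{-k}$. But $\delta=2\ep\tau$ is a fixed positive quantity, and a superposition of $\lambda_{\rho_\nu}^*\chi_+^{-k}$ over a $\nu$-interval of length $\sim\sigma$ with a \emph{uniformly $C^2$-bounded} weight is a distribution of strength $\sim\sigma$, far too tame to reproduce the $k$-th order pole $(\cdot)^{-k}$ of size $\sim\sigma^{-k}$ on the slab; you would in fact need $\Psi_{\ep,\sigma}\sim\sigma^{1-k}$, and then the factor $\sigma$ from integrating in $\nu$ does not produce an $O(1)$ bound when $k\ge2$. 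This is precisely where the mechanism of Lemma~\ref{l:counter} is indispensable: the order of the pole is first reduced from $k$ to $1$ by pushing $k-1$ derivatives onto the test function (identity \eqref{e:induc}), and only then does one pass to spherical coordinates, apply \eqref{e:dist-id2} to interpret the inner pairing as $\langle\lambda_\rho^*\chi_+^{-(k-\ell)},\cdot\rangle$, and invoke Lemma~\ref{lem_BR}; the remaining order-$1$ denominator, written as a $\tau$-dependent weight $\phi_{\ep,\ell}(\rho,\tau)$, has $C^2$-norm $\lesssim(2^j\ep)^{-1}$, which cancels against the $\rho$-support of length $\sim 2^j\ep$. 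You explicitly flag that Lemma~\ref{l:counter} ``presumably'' performs the needed manipulation, but without carrying it out the proposal lacks the central ingredient of the proof, and the Mikhlin--H\"ormander control you invoke for the principal-value part is not justified at this level of singularity. (Separately, the identity \eqref{e:rev} is used in the paper to \emph{undo} the order-lowering before running the $L^2$ argument, because after \eqref{e:induc} the Bochner--Riesz of index $-(k-\ell)$ is not $L^2$-bounded; your direct Plancherel treatment at $\frac1p=\frac12$ bypasses this wrinkle.)

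Two lesser issues. First, your radial decomposition uses $\rho(\tau)=\sqrt{1-\ep^2\tau^2}$, which depends on $\tau$, whereas Lemma~\ref{lem_BR} requires a fixed radius; the paper avoids this by integrating over a free radial variable $\rho$ in spherical coordinates and absorbing the $\ep^2\tau^2$ correction into the $\tau$-dependent weight $\phi_{\ep,\ell}$. Second, the sentence ``Summing over $\sigma\ge\ep$ collapses to the smallest scale, and real interpolation with the $L^2$-corner estimates then produces the stated strong-type bound'' has the order of operations backwards at the $[B_k^d,D_k^d]$ edge: there the per-$\sigma$ bound is $O(1)$, so summing the $\sim\log\ep^{-1}$ pieces yields a logarithmic loss. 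One must interpolate each dyadic piece first to a point with $\frac1p<\frac{d-2+2k}{2(d-1)}$ strictly---which is precisely the constraint defining $\mathcal T_k^d$---and only then sum; this is why $[B_k^d,D_k^d]$ is excluded from $\mathcal T_k^d$.
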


The estimate \eqref{e:tilde}, combined with the identity \eqref{e:scale}, yields the following. 

\begin{corollary}\label{p:suff}
Let $\ep\in\mathbb D$. If $(\frac1p,\frac1q)\in\mathcal T_{k}^d$, then 
\begin{equation}\label{e:me}
\| m_{\ep} (D)\|_{p\to q} \lesssim \ep^{\frac{d}p-\frac1q -\frac{d-2+2k}2}.
\end{equation}
In particular, if $(\frac1p,\frac1q)\in [A^d,B_{k}^d)$, then
\begin{equation}\label{e:me1}
	\|m_\ep(D)\|_{p\to q} \lesssim \ep^{\frac{d+2}2(\frac1p-\frac1q)-k}.
\end{equation}
\end{corollary}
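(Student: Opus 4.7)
The plan is to read off this corollary as a direct packaging of Proposition~\ref{p:tilde} through the scaling identity \eqref{e:scale}, together with a short piece of algebraic bookkeeping specific to the line segment $[A^d,B_k^d)$.

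For the first bound \eqref{e:me}, I would simply combine \eqref{e:scale} with \eqref{e:tilde}: on $\mathcal T_k^d$ we have
\[
\|m_\ep(D)\|_{p\to q} = \ep^{\frac1p-\frac1q}\|\wt m_\ep(D)\|_{p\to q} \lesssim \ep^{\frac1p-\frac1q}\cdot \ep^{\frac{d-1}p-\frac{d-2+2k}2} = \ep^{\frac dp-\frac1q-\frac{d-2+2k}2},
\]
which is exactly \eqref{e:me}. No further work is needed for this part.

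For \eqref{e:me1}, the key observation is that the endpoints $A^d=(\tfrac12,\tfrac{d-2}{2d})$ and $B_k^d=\bigl(\tfrac{d-2+2k}{2(d-1)},\tfrac{(d-2)(d-2k)}{2d(d-1)}\bigr)$ both satisfy the affine relation $\tfrac{d-2}p+\tfrac dq=d-2$, which is a routine arithmetic verification at each endpoint. Since the relation is affine, it persists on the entire segment $[A^d,B_k^d)$. A direct algebraic rearrangement of $\tfrac{d-2}p+\tfrac dq=d-2$ then yields the identity
\[
\tfrac dp-\tfrac1q-\tfrac{d-2+2k}2 \;=\; \tfrac{d+2}2\Bigl(\tfrac1p-\tfrac1q\Bigr)-k
\]
on $[A^d,B_k^d)$, so substituting the right-hand side into \eqref{e:me} immediately gives \eqref{e:me1}.

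Since both steps amount to bookkeeping---a one-line scaling composition followed by an exponent identity on a line segment---I do not anticipate any real obstacle. The only place where a little caution is required is confirming that the affine condition $\tfrac{d-2}p+\tfrac dq=d-2$ holds throughout $[A^d,B_k^d)$ rather than at a single point, which follows immediately from its being affine combined with the two endpoint checks.
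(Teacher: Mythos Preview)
Your proposal is correct and matches the paper's own argument essentially verbatim: the paper derives \eqref{e:me} directly from \eqref{e:scale} and Proposition~\ref{p:tilde}, and obtains \eqref{e:me1} by noting that $[A^d,B_k^d)$ lies on the line $y=\tfrac{d-2}d(1-x)$, which is the same affine relation $\tfrac{d-2}p+\tfrac dq=d-2$ you identified. Your exponent identity on this line is exactly the substitution the paper uses, so nothing further is required.
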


The second estimate \eqref{e:me1} follows by \eqref{e:me} since $[A^d, B_{k}^d)\subset \big\{(x,y) \colon y=\tfrac{d-2}d(1-x)\big\}$. In fact, all the estimates \eqref{e:tilde}, \eqref{e:me}, and \eqref{e:me1} are sharp, which we prove in the next section (Propositions \ref{p:example1} and \ref{p:example2}). 

In order to prove Proposition \ref{p:tilde} we use induction on $k$. For this, it is convenient to slightly generalize the definition of $\wt m_\ep$ as follows. For  $\zeta\in C^\infty_0([-2,2])$ and $0<\delta<1/2$, let us define
\begin{equation}
    \wt m^k_\ep[\zeta,\delta](\eta, \tau)
    =\frac{\zeta(\delta^{-1}(1-|\eta|^2))\psi(\tau)}{(|\eta|^2-1+\ep^2\tau^2 +2i\ep\tau)^k},\ \  (\eta,\tau)\in\R^{d-1}\times \R.
\end{equation}
When $\delta=\ep_\circ$, we simply write $\wt m^k_\ep[\zeta]=\wt m^k_\ep[\zeta,\ep_\circ]$.  Also, notice that the multiplier $\wt m_\ep$ defined in \eqref{e:msc} can be expressed as $\wt m_\ep=\wt m_\ep^k [\psi_0]= \wt m_\ep^k [\psi_0,\ep_\circ]$. In the next lemma, we obtain identities that are important for our induction arguments in the proof of Proposition \ref{p:suff}.
\begin{lemma}\label{l:counter}
Let $k$ be a positive integer and $L$ the differential operator defined in Lemma \ref{l:order-increasing} with $n$ replaced by $d-1$. That is,  
\[ Lh(\eta)=\frac1{2|\eta|^2}(d-3+\eta\cdot\nabla)h(\eta), \ \  \eta \in \R^{d-1}. \]
For $h\in C^{\infty}(\R^{d-1})$ supported away from the origin, we have
\begin{align}
\label{e:induc}
   & \langle \wt m^k_\ep[\zeta](\cdot, \tau), h\rangle 
    =\sum_{\ell =0}^{k-1} \frac{(-1)^\ell}{\ep^\ell_\circ (k-1-\ell)! \ell!} \langle \wt m^1_\ep [\zeta^{(\ell)}](\cdot, \tau), L^{k-1-\ell}h\rangle, \\
\label{e:rev}
	&\langle \wt m^1_\ep[\zeta,\delta](\cdot, \tau) , L^{k-1}h\rangle 
=  \sum_{\ell=0}^{k-1} \frac{(k-1)!}{\delta^\ell \ell!} \langle \wt m^{k-\ell}_\ep[\zeta^{(\ell)},\delta](\cdot, \tau), h\rangle.
\end{align}
\end{lemma}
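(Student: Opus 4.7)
My plan is to reduce both identities to a single master recursion obtained from one integration by parts against $L$. First I would observe that since $\tau\in\supp\psi$ is bounded away from $0$ and $\ep>0$, the complex number $z_\tau := 1-\ep^2\tau^2-2i\ep\tau$ has $\IM z_\tau\neq 0$, so $(|\eta|^2-z_\tau)^{-m}$ is smooth on $\R^{d-1}$ and the pairings $\langle \wt m_\ep^m[\zeta^{(j)},\delta](\cdot,\tau),h\rangle$ are ordinary Lebesgue integrals; all integrations by parts below have vanishing boundary terms, since $\zeta$ is compactly supported and $h$ is supported away from $\eta=0$. A direct computation using $\sum_{j=1}^{d-1}\partial_j(\eta_j/|\eta|^2)=(d-3)/|\eta|^2$ on $\R^{d-1}\setminus\{0\}$ gives the formal adjoint
$$ L^*f(\eta) = -\tfrac{1}{2|\eta|^2}\,\eta\cdot\nabla f(\eta), $$
so that $L^*(g(|\eta|^2))=-g'(|\eta|^2)$ for smooth $g$. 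Since the multiplier of $\wt m_\ep^m[\zeta^{(j)},\delta]$ (divided by $\psi(\tau)$) equals $g_{j,m}(|\eta|^2)$ with $g_{j,m}(t)=\zeta^{(j)}(\delta^{-1}(1-t))(t-z_\tau)^{-m}$, the Leibniz rule yields the master recursion
$$ \langle \wt m_\ep^m[\zeta^{(j)},\delta](\cdot,\tau), Lh\rangle = m\,\langle \wt m_\ep^{m+1}[\zeta^{(j)},\delta](\cdot,\tau), h\rangle + \delta^{-1}\,\langle \wt m_\ep^m[\zeta^{(j+1)},\delta](\cdot,\tau), h\rangle. $$

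For \eqref{e:rev}, I would iterate this identity, which amounts to computing $(L^*)^{k-1}$ applied to the multiplier of $\wt m_\ep^1[\zeta,\delta]$. Induction on $k$ using the two-term recurrence $c_{k+1,\ell}=(k-\ell)c_{k,\ell}+c_{k,\ell-1}$ with $c_{1,0}=1$ gives
$$ (L^*)^{k-1}\Bigl(\tfrac{\zeta(\delta^{-1}(1-|\eta|^2))}{|\eta|^2-z_\tau}\Bigr) = \sum_{\ell=0}^{k-1} c_{k,\ell}\,\delta^{-\ell}\,\tfrac{\zeta^{(\ell)}(\delta^{-1}(1-|\eta|^2))}{(|\eta|^2-z_\tau)^{k-\ell}}, $$
and a short check shows $c_{k,\ell}=(k-1)!/\ell!$; a single integration by parts then produces \eqref{e:rev}. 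For \eqref{e:induc}, I would argue by induction on $k$, the base $k=1$ being trivial. The inductive step applies the master recursion with $\delta=\ep_\circ$, $m=k-1$, $j=0$, rearranged as
$$ \langle \wt m_\ep^k[\zeta],h\rangle = \tfrac{1}{k-1}\langle \wt m_\ep^{k-1}[\zeta], Lh\rangle - \tfrac{1}{(k-1)\ep_\circ}\langle \wt m_\ep^{k-1}[\zeta'], h\rangle, $$
and then substitutes the inductive hypothesis into each of the two pairings on the right. A Pascal-type identity $\tfrac{1}{(k-2-\ell)!\ell!}+\tfrac{1}{(k-1-\ell)!(\ell-1)!}=\tfrac{k-1}{(k-1-\ell)!\ell!}$ combines the contributions to a given $\ell$ into the claimed coefficient $(-1)^\ell/(\ep_\circ^\ell(k-1-\ell)!\ell!)$.

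The main obstacle will be combinatorial bookkeeping --- checking that the recurrence coefficients in \eqref{e:rev} and the alternating-sign coefficients in \eqref{e:induc} collapse to the stated closed forms. The analytic content, by contrast, is just one integration by parts against $L$, enabled by the observation that each multiplier is a radial function of $\eta$ so that $L^*$ acts on it as differentiation in $|\eta|^2$.
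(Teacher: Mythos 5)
Your proposal is correct and follows essentially the same route as the paper. Your ``master recursion'' is precisely the paper's identity \eqref{e:iind} (obtained there by computing $L^t=-\tfrac{\eta}{2|\eta|^2}\cdot\nabla$ and integrating by parts), and your inductive schemes for \eqref{e:induc} and \eqref{e:rev} --- including the rearranged form of the recursion, the reindexing, and the Pascal-type identity --- mirror the paper's calculation, merely making explicit the coefficient bookkeeping that the paper leaves to the reader.
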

\begin{proof}
First, we prove the identity \eqref{e:induc}. It is trivial if $k=1$, so we assume $k\ge2$. Direct differentiation gives 
\[ \wt m^k_\ep[\zeta](\eta, \tau) = -\frac1{k-1} \bigg(\frac1{\ep_\circ} \wt m^{k-1}_\ep [\zeta'] (\eta, \tau) +\frac\eta{2|\eta|^2} \cdot \nabla_\eta \wt m_\ep^{k-1}[\zeta] (\eta, \tau) \bigg).
\]
Hence, integrating by parts we have
\begin{align*}
\langle \wt m^k_\ep[\zeta] (\cdot, \tau), h\rangle
&=-\frac1{k-1} \bigg(\frac1{\ep_\circ} \langle \wt m^{k-1}_\ep [\zeta'](\cdot, \tau) , h\rangle - \sum_{j=1}^{d-1} \langle \wt m_\ep^{k-1}[\zeta](\cdot, \tau) , \frac \partial{\partial\eta_j}(\frac{\eta_j}{2|\eta|^2} h)\rangle \bigg)\\
&=-\frac1{(k-1)\ep_\circ} \langle \wt m^{k-1}_\ep [\zeta'](\cdot, \tau) , h\rangle + \frac1{k-1} \langle \wt m_\ep^{k-1}[\zeta](\cdot, \tau), L h\rangle.
\end{align*}
By this identity and the induction hypothesis we obtain 
\begin{align*}
\langle \wt m^k_\ep[\zeta] (\cdot, \tau), h\rangle
&= -\frac1{(k-1)\ep_\circ}\sum_{\ell =0}^{k-2} \frac{(-1)^\ell}{\ep^\ell_\circ (k-2-\ell)!\ell!} \langle \wt m^1_\ep [\zeta^{(\ell+1)}](\cdot, \tau), L^{k-2-\ell} h\rangle \\
&  \quad  + \frac1{k-1} \sum_{\ell =0}^{k-2} \frac{(-1)^\ell}{\ep^\ell_\circ (k-2-\ell)! \ell!} \langle \wt m^1_\ep [\zeta^{(\ell)}](\cdot, \tau), L^{k-1-\ell} h\rangle.
\end{align*}
Rearranging the summands gives \eqref{e:induc}.

Since $L^t=-\frac{\eta}{2|\eta|^2}\cdot \nabla$ we see that 
\begin{align}
\langle \wt m^j_\ep[\zeta,\delta](\cdot, \tau), L h \rangle 
&= \langle L^t \wt m^j_\ep[\zeta,\delta](\cdot, \tau), h \rangle \nonumber \\
&= \frac1{\delta} \langle \wt m^j_\ep[\zeta',\delta](\cdot, \tau), h \rangle +j \langle \wt m^{j+1}_\ep[\zeta,\delta](\cdot, \tau), h \rangle \label{e:iind}
\end{align}
for $j\in \mathbb N$. Now, we prove the second identity \eqref{e:rev}.  It is clear when $k=1$, so let us assume $k\ge2$. By the induction hypothesis,
\[\langle \wt m^1_\ep[\zeta,\delta](\cdot, \tau) , L^{k-1}h\rangle
= \sum_{\ell=0}^{k-2} \frac{(k-2)!}{\delta^\ell \ell!} \langle \wt m^{k-1-\ell}_\ep[\zeta^{(\ell)},\delta](\cdot, \tau), Lh\rangle.	\]
By \eqref{e:iind} this is equal to 
\[	\sum_{\ell=0}^{k-2} \frac{(k-2)!}{\delta^\ell \ell!} \bigg(\frac1\delta \langle \wt m^{k-1-\ell}_\ep[\zeta^{(\ell+1)},\delta](\cdot, \tau), h\rangle + (k-1-\ell)\langle \wt m^{k-\ell}_\ep[\zeta^{(\ell)},\delta](\cdot, \tau), h\rangle \bigg),	\]
from which \eqref{e:rev} follows.
\end{proof}

In the following, making use of Tomas--Stein's restriction estimate (\cite{To75, St-beijing}), we prove sharp $L^2-L^{\frac{2d}{d-2}}$ estimate for the multiplier operators given by $\wt m_\ep^k[\zeta, 2^j\ep]$.
\begin{lemma}\label{l:L2}
Let $1\le k <\frac d2$ and $\frac{2d}{d-2}\le q\le\infty$. For $\zeta \in C^\infty_0 \big([-2,2]\setminus [-\frac12,\frac12] \big)$, $\ep\in\mathbb D$, and $j=0,1,\ldots$ satisfying $2^j\le \frac1{4\ep}$, we have
\[    \|\wt m^k_\ep[\zeta, 2^j\ep](D)\|_{2\to q}\lesssim (2^j\ep)^{\frac12-k} \|\zeta\|_{L^\infty(\R)}.	\]
When $j=0$ the estimate also holds with $\zeta \in C^\infty_0([-2,2])$.
\end{lemma}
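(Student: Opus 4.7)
The plan is to prove the bound at the two endpoints $q=q_0:=\tfrac{2d}{d-2}$ and $q=\infty$ and then interpolate. Set $\delta:=2^j\ep$. A direct inspection of the denominator, using $\ep\le\ep_\circ\le 2^{-5}$ and the support hypothesis on $\zeta$, shows that $|(|\eta|^2-1+\ep^2\tau^2)+2i\ep\tau|\sim \delta$ on the support of $\wt m^k_\ep[\zeta,\delta]$: when $j\ge 1$ the real part dominates because of the support of $\zeta$, and when $j=0$ the imaginary part $2\ep\tau$, with $|\tau|\sim 1$, supplies the lower bound. Hence $|\wt m^k_\ep[\zeta,\delta]|\lesssim \|\zeta\|_{L^\infty}\delta^{-k}$ on a set of Lebesgue measure $\sim \delta$, so $\|\wt m^k_\ep[\zeta,\delta]\|_{L^2}\lesssim \|\zeta\|_{L^\infty}\delta^{\frac12-k}$. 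Cauchy--Schwarz in frequency then yields the endpoint $q=\infty$.

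For the harder endpoint $q=q_0$, write $x=(y,t)\in\R^{d-1}\times\R$ and, for fixed $\tau\in\supp\psi$, consider the slice multiplier $m^\tau(\eta):=\wt m^k_\ep[\zeta,\delta](\eta,\tau)$. Since $m^\tau$ is radial in $\eta$, we may write $m^\tau(\eta)=M_\tau(|\eta|)$, where $M_\tau$ is supported on an interval of length $\sim\delta$ near $s=1$ with $|M_\tau|\lesssim\|\zeta\|_{L^\infty}\delta^{-k}$. Using the disintegration
\[	m^\tau(\eta)\wh g(\eta) = \int_0^\infty M_\tau(s)\,\delta_0(|\eta|-s)\wh g(\eta)\,ds,	\]
applying the Tomas--Stein extension estimate on each sphere $sS^{d-2}\subset\R^{d-1}$ (uniformly for $s\sim 1$), and then using Minkowski and Cauchy--Schwarz in $s$ together with the polar identity $d\eta=ds\,d\sigma_s$, one obtains the slicewise bound
\[	\|m^\tau(D_y)g\|_{L^{q_0}_y(\R^{d-1})}\lesssim \|M_\tau\|_{L^2(ds)}\|g\|_{L^2_y} \lesssim \|\zeta\|_{L^\infty}\delta^{\frac12-k}\|g\|_{L^2_y},	\]
uniformly in $\tau\in\supp\psi$. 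To reassemble the $t$-variable, let $\wt f(y,\tau)$ denote the partial Fourier transform of $f$ in $t$, so that
\[	\wt m^k_\ep[\zeta,\delta](D)f(y,t) = \frac1{2\pi}\int_\R e^{it\tau}\psi(\tau)\,[m^\tau(D_y)\wt f(\cdot,\tau)](y)\,d\tau.	\]
Plancherel in $t$ followed by Minkowski's integral inequality (valid since $q_0\ge 2$) converts the slicewise bound into the mixed-norm estimate $\|\wt m^k_\ep[\zeta,\delta](D)f\|_{L^{q_0}_yL^2_t}\lesssim \|\zeta\|_{L^\infty}\delta^{\frac12-k}\|f\|_{L^2(\R^d)}$. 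Because $\supp\psi\subset[-2,2]$, the function $\wt m^k_\ep[\zeta,\delta](D)f(y,\cdot)$ is band-limited in $t$ for each $y$, so Bernstein's inequality upgrades $L^2_t$ to $L^{q_0}_t$ at universal cost, yielding the $L^2\to L^{q_0}(\R^d)$ bound. Riesz--Thorin interpolation with the $L^2\to L^\infty$ endpoint then covers the entire range $q\in[q_0,\infty]$.

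The main technical obstacle is that $\tau$ enters the denominator, so the multiplier does not factor as a product in $\eta$ and $\tau$ and one cannot directly invoke off-the-shelf multiplier theorems. The remedy is to treat $\tau$ as a parameter: the radial symmetry of $m^\tau$ in $\eta$ permits Tomas--Stein to apply slicewise, while the compact Fourier support in $t$ provided by $\psi$ lets Plancherel and Bernstein glue the slices into a clean estimate on $\R^d$.
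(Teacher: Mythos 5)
Your proof is correct and uses the same two endpoint estimates as the paper: the $L^2\to L^\infty$ bound via $\|\wt m^k_\ep[\zeta,\delta]\|_{L^2}\lesssim |\supp|^{1/2}\cdot\|\,\cdot\,\|_{L^\infty}\lesssim\|\zeta\|_\infty\delta^{1/2-k}$, and the $L^2\to L^{2d/(d-2)}$ bound via the Tomas--Stein adjoint restriction estimate for $S^{d-2}\subset\R^{d-1}$, interpolated in between. The difference is the order in which the $\tau/t$-variable is handled at the Tomas--Stein endpoint: the paper writes the operator in polar coordinates in $\eta$, brings the $\rho$-integral out by Minkowski, then applies Hausdorff--Young in $\tau$ followed by Minkowski, Tomas--Stein in $y$, H\"older to pass from $L^{q_0'}_\tau$ to $L^2_\tau$ (using the compact $\tau$-support), and finally Cauchy--Schwarz in $\rho$; you instead first establish a uniform-in-$\tau$ slicewise bound $\|m^\tau(D_y)\|_{L^2_y\to L^{q_0}_y}\lesssim\|\zeta\|_\infty\delta^{1/2-k}$, then glue using Plancherel in $t$, Minkowski (valid since $q_0\ge 2$), and upgrade $L^2_t$ to $L^{q_0}_t$ by Bernstein, again exploiting the band-limitation coming from $\psi(\tau)$. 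The two routes are essentially dual ways of spending the compact $\tau$-support and yield the same constants, so this is a cosmetic rather than a structural departure. One small slip: in your displayed identity
\[
\wt m^k_\ep[\zeta,\delta](D)f(y,t)=\frac1{2\pi}\int_\R e^{it\tau}\psi(\tau)\,[m^\tau(D_y)\wt f(\cdot,\tau)](y)\,d\tau,
\]
the factor $\psi(\tau)$ is double-counted, since you defined $m^\tau(\eta):=\wt m^k_\ep[\zeta,\delta](\eta,\tau)$ which already contains $\psi(\tau)$; drop the extra $\psi(\tau)$ (this does not affect the estimates that follow).
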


\begin{proof}
For brevity's sake, let us set 
\[	M^k_{\ep,j} = \wt m^k_\ep[\zeta, 2^j\ep].	\]
By interpolation it is enough to prove that
\begin{align}
	\| M^k_{\ep, j}(D)f\|_{L^\infty(\R^d)} &\lesssim (2^j\ep)^{\frac12-k} \|\zeta\|_{L^\infty(\R)} \|f\|_{L^2(\R^d)}, \label{es_L2} \\
	\| M^k_{\ep, j}(D)f\|_{L^\frac{2d}{d-2}(\R^d)} &\lesssim (2^j\ep)^{\frac12-k} \|\zeta\|_{L^\infty(\R)} \|f\|_{L^2(\R^d)}. \label{es_TS}
\end{align}

Obviously,
\begin{align*}
	|M^k_{\ep, j}(D)f(x)| 
	&\lesssim \|M^k_{\ep, j}\|_{L^2(\mathbb R^d)} \| \wh f\|_{L^2(\mathbb R^d)} 
	\lesssim |\supp M^k_{\ep,j}|^\frac12 \|M^k_{\ep, j}\|_{L^\infty(\mathbb R^d)} \|f\|_{L^2(\mathbb R^d)}.
\end{align*}
If $\zeta\in C^\infty_0 \big([-2,2]\setminus [-\frac12,\frac12] \big)$, then  $M^k_{\ep,j}$ is supported in 
\begin{equation}\label{e:supp}
\begin{aligned}
	&\big\{(\eta, \tau)\in \R^{d-1}\times \R\colon\sqrt{1-2^{j+1}\ep} \le |\eta| \le \sqrt{1-2^{j-1}\ep}, \ \tfrac12\le |\tau|\le 2\big\} \\
	&\cup 	\big\{(\eta, \tau)\in \R^{d-1}\times \R\colon\sqrt{1+2^{j-1}\ep} \le |\eta| \le \sqrt{1+2^{j+1}\ep}, \ \tfrac12\le |\tau|\le 2\big\}.
\end{aligned}
\end{equation}
Hence it is clear that $|\supp M^k_{\ep,j}|\lesssim 2^j\ep$.  On the support of $M^k_{\ep,j}$ we have
\[	||\eta|^2-1+\ep^2\tau^2 +2i\ep\tau| \ge ||\eta|^2-1|-\ep^2\tau^2 \ge 2^{j-1}\ep-4\ep_\circ\ep,	\]
which is greater than $2^{j-2}\ep$ since $4\ep_\circ \le 2^{-3}< 2^{j-2}$. Hence
\begin{equation}\label{e:M-sup}
\|M^k_{\ep, j}\|_{L^\infty(\mathbb R^d)} \lesssim (2^{j}\ep)^{-k} \|\zeta\|_{L^\infty(\R)},
\end{equation} and we obtain the estimate \eqref{es_L2}.

If $\supp \zeta \subset [-\frac12, \frac12]$, then $\supp M_{\ep, 0}^k$ is contained in the set
\begin{equation}\label{e:supp0}
	\big\{(\eta, \tau)\in \R^{d-1}\times \R\colon\sqrt{1-2\ep} \le |\eta| \le \sqrt{1+2\ep}, \ \tfrac12\le |\tau|\le 2 \big\}
\end{equation}
on which $||\eta|^2-1+\ep^2\tau^2 +2i\ep\tau| \ge \ep$. Thus, a similar argument gives \eqref{es_L2} with $j=0$ and $\zeta\in C_0^\infty([-2,2])$.

To prove \eqref{es_TS}, let us choose $\wt\psi \in C^\infty_0([-3,3])$ such that $\wt\psi \zeta =\zeta$. Writing $x=(y,t)\in \R^{d-1}\times \R$ and using the spherical coordinates we note that  
\[	M^k_{\ep, j}(D)f(x)=\frac1{(2\pi)^d} \int_0^\infty \wt\psi \bigg(\frac{1-\rho^2}{2^j\ep}\bigg)  \int_\R\int_{S^{d-2}} e^{i(\rho y\cdot\theta+t\tau)} \big(M^k_{\ep,j}\wh f \big)(\rho\theta, \tau) d\sigma(\theta) d\tau \rho^{d-2}d\rho .	\] 
Thus, by the Minkowski inequality $\|M^k_{\ep,j}(D)f\|_{L^\frac{2d}{d-2}(\R^d)}$ is bounded by a constant times 
\begin{equation}\label{e:st_me}
	\int_0^\infty  \wt\psi \bigg(\frac{1-\rho^2}{2^j\ep}\bigg) \left\| \int_\R\int_{S^{d-2}} e^{i(\rho y\cdot\theta+t\tau)} \big(M^k_{\ep,j}\wh f \big)(\rho\theta, \tau) d\sigma(\theta) d\tau \right\|_{L_{y,t}^\frac{2d}{d-2}} \rho^{d-2}d\rho.
\end{equation}
Applying the Hausdorff--Young and Minkowski inequalities successively gives 
\begin{align*}
	&\left\| \int_\R\int_{S^{d-2}} e^{i(\rho y\cdot\theta+t\tau)} \big(M^k_{\ep,j}\wh f \big)(\rho\theta, \tau) d\sigma(\theta) d\tau \right\|_{L_{y,t}^{\frac{2d}{d-2}}(\R^d)} \\
	&\lesssim \bigg\| \left\| \int_{S^{d-2}} e^{i\rho y\cdot\theta} \big(M^k_{\ep,j}\wh f \big)(\rho\theta, \tau) d\sigma(\theta) \right\|_{L_\tau^\frac{2d}{d+2}(\R)} \bigg\|_{L_{y}^{\frac{2d}{d-2}}(\R^{d-1})} \\
	&\le \bigg\| \left\| \int_{S^{d-2}} e^{i\rho y\cdot\theta} \big(M^k_{\ep,j}\wh f \big)(\rho\theta, \tau) d\sigma(\theta) \right\|_{L_{y}^{\frac{2d}{d-2}}(\R^{d-1})} \bigg\|_{L_\tau^\frac{2d}{d+2}(\R)} .
\end{align*}
Since $2^j\ep \le \frac14$ we see from \eqref{e:supp} (or \eqref{e:supp0}) that $M^k_{\ep,j}(\rho\theta,\tau)\neq 0$ only if $\rho\sim 1$. Making use of the adjoint restriction estimate due to Tomas \cite{To75} and Stein \cite{St-beijing} and the H\"older inequality, the last is 
\[	 \lesssim \big\| \big\|\big(M^k_{\ep,j}\wh f \big)(\rho\theta, \tau)\big\|_{L^2_\theta(S^{d-2})} \big\|_{L_\tau^\frac{2d}{d+2}(\R)} \lesssim \big\|\big(M^k_{\ep,j}\wh f \big)(\rho\theta, \tau)\big\|_{L^2_{\theta, \tau}(S^{d-2}\times \R)}. 	\]
Since the $\rho$-support of the integrand in \eqref{e:st_me} is contained in an interval of length $\lesssim 2^j\ep$, the Cauchy--Schwarz inequality, \eqref{e:M-sup}, and the Plancherel theorem yield
\begin{align*}
\|M^k_{\ep, j}(D)f\|_{L_x^\frac{2d}{d-2}} 
	&\lesssim \sqrt{2^j\ep} \bigg(\int_0^\infty  \wt\psi \bigg(\frac{1-\rho^2}{2^j\ep}\bigg)^2 \big\| ( M^k_{\ep,j}\wh f \big)(\rho\theta, \tau)\big\|_{L^2_{\theta, \tau}(S^{d-2}\times \R)}^2 \rho^{d-2}d\rho \bigg)^\frac12 \\
	&= \sqrt{2^j\ep} \, \big\| \big( M^k_{\ep,j} \wh f \big)(\eta, \tau)\big\|_{L^2_{\eta, \tau}(\mathbb R^d)} \\
	&\lesssim (2^j\ep)^{\frac12-k} \|\zeta\|_{L^\infty(\R)} \|f\|_{L^2(\mathbb R^d)}.
\end{align*}
This gives the estimate \eqref{es_TS}.
\end{proof}

Now, we prove Proposition \ref{p:tilde}.
\begin{proof}[Proof of Proposition \ref{p:tilde}]  
Let $h_{y,\tau}(\eta)=\wh f(\eta,\tau)e^{i\eta\cdot y}$. We recall the notation $\wt m_\ep= \wt m_\ep^k[\psi_0]$, and write  
\[	\wt m_\ep(D) f(y,t) = \frac1{(2\pi)^d}\int_\R e^{it\tau} \big \langle \wt m_\ep^k[\psi_0](\cdot, \tau) , h_{y,\tau} \big \rangle d\tau. \]
As  mentioned before, we need to dyadically decompose the inner integral near $S^{d-2}$, and obtain sharp estimates for each dyadic piece. More precisely, for $(\frac1p, \frac1q)\in [B_k^d, D_k^d]$, we show the $\ep$-uniform $L^{p,1}-L^{q,\infty}$ estimates for the dyadic operators. When $k\ge2$, however, this is not possible as can be seen by a simple example.  Prior to dyadic decomposition, we must relax the order $k$ by using the identity \eqref{e:induc}. This gives, for $\ep\in \mathbb D$,
\[	\wt m_\ep(D) f(y,t) = \sum_{\ell=0}^{k-1} c_{k,\ell,\ep_\circ} \int_\R e^{it\tau} \big \langle \wt m^1_\ep [\psi_0^{(\ell)}](\cdot, \tau), L^{k-1-\ell} h_{y,\tau} \big\rangle d\tau, \]
where $c_{k,\ell,\ep_\circ}=\frac{(-1)^\ell}{(2\pi)^d \ep_\circ^\ell (k-1-\ell)! \ell!}$. 

Let us define 
\[	I_{\ell}(y,t)= \int_\R e^{it\tau} \big\langle \wt m^1_\ep [\psi_0^{(\ell)}](\cdot, \tau), L^{k-1-\ell} h_{y,\tau} \big\rangle d\tau.	\]
For $0\le \ell\le k-1$ and $(\frac1p,\frac1q)\in \mathcal T_k^d$, we need only to prove 
\begin{equation}\label{e:goal}
    \| I_\ell \|_{L^q(\R^d)} \lesssim \ep^{\frac{d-1}p-\frac{d-2+2k}2}\|f\|_{L^p(\R^d)}.
\end{equation}

We dyadically decompose $\wt m^1_\ep [\psi_0^{(\ell)}](\cdot,\tau)$ away from $S^{d-2}$ in the $\ep$ scale. Recalling the smooth cutoff function $\psi$ introduced at the beginning of Section \ref{s:c_wucp}, we define $\psi_j(t)=\psi(2^{-j}t)$ for $j\ge1$. Then $\sum_{j\ge 0} \psi_j =1$ on $\R$, so we can write $I_\ell(y,t) = \sum_{j\ge 0} I_{\ell, j}(y,t)$ where
\begin{equation}\label{e:def_I}
I_{\ell, j}(y,t):= \int_\R e^{it\tau} \big\langle  \psi_j (\ep^{-1}(1-|\cdot|^2))\wt m^1_\ep [\psi_0^{(\ell)}](\cdot, \tau), L^{k-1-\ell} h_{y,\tau} \big\rangle d\tau.
\end{equation}
In fact, the set of summation indices $j$ is finite since $\psi_j(\cdot/\ep)\psi_0^{(\ell)}(\cdot/\ep_\circ)\neq 0$ only if $2^{j-1}\ep\le 2\ep_\circ$.  

We aim to prove that 
\begin{gather}
\| I_{\ell,j}\|_{L^{q,\infty}(\R^d)} \lesssim \|f\|_{L^{p,1}(\R^d)} \ \  \text{if} \ \  (\tfrac1p,\tfrac1q)\in [B^d_k, D^d_k], \label{e_lj}\\
\| I_{\ell,j}\|_{L^q(\R^d)} \lesssim (2^j\ep)^{\frac12-k}\|f\|_{L^2(\R^d)} \ \  \text{if} \ \  \tfrac{2d}{d-2}\le q\le \infty. \label{e_lj2}
\end{gather}
If we assume these estimates for the moment, then we have by real interpolation
\[	\| I_{\ell,j}\|_{L^q(\R^d)} \lesssim (2^j\ep)^{\frac{d-1}p-\frac{d-2+2k}2}\|f\|_{L^p(\R^d)}	\]
for $(\frac1p,\frac1q)\in \mathcal T^d_k$. Since $\frac1p<\frac{d-2+2k}{2(d-1)}$ summing over $j$ we obtain the desired estimate \eqref{e:goal}.  It remains to prove \eqref{e_lj} and \eqref{e_lj2}.

First, we prove \eqref{e_lj}. Using the spherical coordinates, the identity \eqref{e:sp_msr}, and scaling,  we have 
\[	I_{\ell,j}(y,t)=\int_0^\infty \psi_j\bigg(\frac{1-\rho^2}{\ep}\bigg) \int_\R e^{it\tau}\phi_{\ep, \ell}(\rho, \tau) \big\langle \lambda_\rho^*\chi_+^{-1}, L^{k-1-\ell} h_{y,\tau}\big\rangle d\tau d\rho,	\]
where $\lambda_\rho(\theta):=\rho^2-|\theta|^2$ and
\[	\phi_{\ep,\ell}(\rho, \tau):= \frac{\psi_0^{(\ell)}(\ep_\circ^{-1}(1-\rho^2))\psi(\tau)}{\rho^2-1+\ep^2\tau^2+2i\ep\tau}.  	\]
Here we omitted  harmless constant multiplication depending only on $d$.  The identity \eqref{e:dist-id2} gives
\begin{equation}\label{e:BR}
	I_{\ell, j}(y,t) = \int \psi_j\bigg(\frac{1-\rho^2}{\ep}\bigg) \int e^{it\tau}\phi_{\ep, \ell}(\rho, \tau) \big\langle \lambda_\rho^*\chi_+^{-(k-\ell)}, \wh f(\theta,\tau)e^{i\theta\cdot y} \big\rangle_\theta d\tau d\rho.
\end{equation}
Let us denote by $\mathcal I_j$ the support of the function $\rho\mapsto \psi_j((1-\rho^2)/\ep)$, that is, 
\[ \mathcal I_j	:=\begin{cases}
	\big [\sqrt{1-2^{j+1}\ep}\,,\sqrt{1-2^{j-1}\ep} \,\big] \cup \big[ \sqrt{1+2^{j-1}\ep}\, ,\sqrt{1+2^{j+1}\ep} \,\big]  &\text{if} \ \ j\ge 1, \\[3pt]
	\big [\sqrt{1-2\ep}\,,\sqrt{1+ 2\ep} \,\big]  &\text{if} \ \ j=0.
	\end{cases}	\]
From the Minkowski inequality and Lemma \ref{lem_BR} it follows that 
\begin{align*}
&\| I_{\ell,j}\|_{L^{q,\infty}(\R^d)}\\
    &\le \int_0^\infty \psi_j\bigg(\frac{1-\rho^2}{\ep}\bigg) \bigg\| \int_\R e^{it\tau}\phi_{\ep, \ell}(\rho, \tau) \big\langle \lambda_\rho^*\chi_+^{-(k-\ell)}, \wh f(\theta,\tau)e^{i\theta\cdot y} \big\rangle_\theta d\tau \bigg\|_{L_{y,t}^{q,\infty}(\R^d)} d\rho \\
    &\lesssim 2^j\ep \sup_{\rho\in\mathcal I_j} \bigg\| \int_\R e^{it\tau}\phi_{\ep, \ell}(\rho, \tau) \big\langle \lambda_\rho^*\chi_+^{-(k-\ell)}, \wh f(\theta,\tau)e^{i\theta\cdot y} \big\rangle_\theta d\tau \bigg\|_{L_{y,t}^{q,\infty}(\R^d)} \\
    &\lesssim 2^j\ep \bigg[ \sup_{\rho\in \mathcal I_j}\| \phi_{\ep,\ell}(\rho,\cdot) \|_{C^2} \bigg] \|f\|_{L^{p,1}(\R^d)} 
\end{align*}
for $(\frac1p,\frac1q)\in[B^d_k, D^d_k]$. Elementary calculation shows that $\sup_{\rho\in \mathcal I_j} \|\phi_{\ep,\ell}(\rho,\cdot)\|_{C^2}\lesssim (2^j\ep)^{-1}$ (see Lemma \ref{l:c2-norm} below). Hence we obtain \eqref{e_lj}.

Now, we turn to prove \eqref{e_lj2}. We cannot follow the strategy of the proof of \eqref{e_lj} which relies on \eqref{e:BR} and boundedness of the Bochner--Riesz operator $T_{k-\ell}$ of order $-(k-\ell)$, since $T_{k-\ell}$ is unbounded from $L^2$ to $L^q$ for any $1\le q\le \infty$ and $0\le \ell \le k-1$. To get over the issue, using the identity \eqref{e:rev}, we integrate by parts again (in the definition \eqref{e:def_I} of $I_{\ell,j}$) to remove $L^{k-1-\ell}$ and then apply Lemma \ref{l:L2}. 

Let us define $\zeta_0\in C_0^\infty([-2,2])$ and $\zeta_j\in C_0^\infty([-2,2]\setminus [-1/2,1/2])$, $j\ge1$, by setting
\[	\zeta_j(t)= \begin{cases}
\psi_0^{(\ell)} (\ep t/\ep_\circ) \psi_0(t) &\text{if} \ \ j=0, \\
\psi_0^{(\ell)} (2^j\ep t/\ep_\circ) \psi(t) &\text{if} \ \ j\ge1.
\end{cases}	\]
Then 
\[ \psi_j (\ep^{-1}(1-|\eta|^2))\wt m^1_\ep [\psi_0^{(\ell)}](\eta, \tau) = \wt m^1_\ep[\zeta_j,2^j\ep ](\eta,\tau), \ \ j\ge 0,  \]
so 
\[	I_{\ell,j}(y,t) =  \int_\R e^{it\tau} \big\langle \wt m^{1}_\ep[\zeta_j, 2^j\ep](\cdot,\tau), L^{k-1-\ell}h_{y,\tau} \big\rangle d\tau.	\]
Applying the identity \eqref{e:rev} we have  
\begin{align*}
 I_{\ell,j}(y,t) 
 &= \sum_{r=0}^{k-1-\ell} \frac{(k-1-\ell)!}{(2^j\ep)^r r!} \int_\R e^{it\tau} \big\langle \wt m^{k-\ell-r}_\ep[\zeta_j^{(r)}, 2^j\ep](\cdot,\tau), h_{y,\tau} \big\rangle d\tau\\
 & = \sum_{r=0}^{ k-1-\ell} \frac{(k-1-\ell)!}{(2^j\ep)^r r!} \iint \wt m^{k-\ell-r}_\ep[\zeta_j^{(r)}, 2^j\ep](\eta,\tau) \wh f(\eta,\tau)  e^{i(y\cdot \eta+t\tau)} d\eta d\tau.
 \end{align*}
We notice that $\supp\zeta_j^{(r)}\subset [-2,2]\setminus [-1/2,1/2]$ if $j\ge1$ and $\supp\zeta_0^{(r)}\subset [-2,2]$.  Since $2^j\ep\le 4\ep_\circ$, we also observe that $\|\zeta_j^{(r)}\|_{L^\infty(\R)}$ is bounded uniformly in $j$ and $\ep$. Thus, by Lemma \ref{l:L2} we get
\[	\|I_{\ell,j}\|_{L^q(\R^d)} 
	\lesssim \sum_{r=0}^{k-1-\ell} (2^j\ep)^{-r}  (2^j\ep)^{\frac12-(k-\ell-r)} \|f\|_{L^2(\R^d)} 
	\lesssim (2^j\ep)^{\frac12-k+\ell} \|f\|_{L^2(\R^d)}	\]
for $\frac{2d}{d-2}\le q\le \infty.$ Since $2^j\ep\lesssim 1$ this estimate gives \eqref{e_lj2}.
\end{proof}

\begin{lemma} \label{l:c2-norm}
Let $\mathcal I_j$ and $\phi_{\ep, \ell}$ be as in the Proof of Proposition \ref{p:tilde}. We have 
\[	\sup_{\rho \in \mathcal I_j} \|\phi_{\ep, \ell}(\rho, \cdot)\|_{C^2(\R)} \lesssim (2^j\ep)^{-1}. 	\]
\end{lemma}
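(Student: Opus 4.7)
The plan is to reduce everything to a pointwise estimate for the denominator
\[	D(\rho,\tau):=\rho^2-1+\ep^2\tau^2+2i\ep\tau	\]
and its first two $\tau$-derivatives on $\mathcal I_j\times \supp\psi$, and then assemble the $C^2$ bound via the quotient rule, using that the numerator $\psi_0^{(\ell)}(\ep_\circ^{-1}(1-\rho^2))\psi(\tau)$ and its $\tau$-derivatives are uniformly bounded and vanish outside $\supp\psi$.

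The first and only substantive step is to establish the lower bound $|D(\rho,\tau)|\gtrsim 2^j\ep$ on $\mathcal I_j\times \supp\psi$. I would split into two cases using the identity $|D|^2=(\rho^2-1+\ep^2\tau^2)^2+4\ep^2\tau^2$. For $j\ge 1$, the definition of $\mathcal I_j$ forces $|\rho^2-1|\in[2^{j-1}\ep,2^{j+1}\ep]$, whereas the correction term satisfies $\ep^2\tau^2\le 4\ep_\circ\ep\le 2^{-3}\ep$ by the standing hypothesis $\ep_\circ\le 2^{-5}$; hence $\ep^2\tau^2\le \tfrac12|\rho^2-1|$ and $|\RE D|\gtrsim 2^j\ep$. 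For $j=0$ the real part may degenerate, but the cutoff $\psi$ vanishes near $\tau=0$, so $\supp\psi\subset\{|\tau|\ge 1/2\}$ gives $|\IM D|=2\ep|\tau|\ge \ep = 2^0\ep$.

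With the lower bound in hand, I would record the elementary upper bounds $|\partial_\tau D|=|2\ep^2\tau+2i\ep|\lesssim\ep$ and $|\partial_\tau^2 D|=2\ep^2\le \ep$ on $\supp\psi$. Expanding $\partial_\tau^m\phi_{\ep,\ell}(\rho,\cdot)$ for $m=0,1,2$ by the quotient rule produces a finite sum of terms of the form (bounded)$\,\cdot (\partial_\tau D)^a(\partial_\tau^2 D)^b/D^{1+a+b}$ with $a+2b\le 2$. Each such term is therefore bounded by
\[	\frac{\ep^{a+b}}{|D|^{1+a+b}}\le (2^j\ep)^{-1}\bigl(\ep/(2^j\ep)\bigr)^{a+b}\le (2^j\ep)^{-1}, 	\]
since $\ep\le 2^j\ep\lesssim 1$. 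Summing the finitely many terms yields the desired estimate.

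No further ingredient is needed; the only delicate point is the $j=0$ case of the denominator bound, where the real part of $D$ can cancel and one must exploit that $\psi$ vanishes near the origin. Everything else is a routine application of the quotient rule together with the two observations that $\ep^2\tau^2$ is negligible compared with $|\rho^2-1|$ when $j\ge 1$ and that $2^j\ep$ is bounded above by an absolute constant throughout the range of summation.
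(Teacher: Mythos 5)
Your proof is correct and follows essentially the same route as the paper: the same two-case lower bound on the denominator (using $|\rho^2-1|-\ep^2\tau^2\gtrsim 2^j\ep$ for $j\ge1$ and $|2\ep\tau|\ge\ep$ for $j=0$, both relying on $\supp\psi\subset\{|\tau|\ge1/2\}$ and $\ep\le\ep_\circ\le2^{-5}$), followed by the quotient rule. The only difference is cosmetic: you make the quotient-rule bookkeeping explicit, where the paper simply invokes "direct calculation."
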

\begin{proof}
If $\rho\in \mathcal I_j$, $j\ge1$, and $\tau\in \supp\psi$, then 
\[	|\rho^2-1+\ep^2\tau^2+2i\ep\tau|\ge |\rho^2-1|-|\ep^2\tau^2| \ge (2^{j-1}-4\ep_\circ)\ep\gtrsim 2^{j}\ep.	\]
The last inequality holds since $0<\ep\le \ep_\circ\le 2^{-5}$. On the other hand, if $\rho \in \mathcal I_0$ and $\tau\in \supp\psi$, then
\[	|\rho^2-1+\ep^2\tau^2+2i\ep\tau|\ge 2\ep\tau \ge \ep.	\]
Therefore, direct calculation shows, for $\rho\in \mathcal I_j$ and $r=0,1,2$, $|\partial_\tau^r \phi_{\ep, \ell}(\rho, \tau)|\lesssim (2^j\ep)^{-1}$.
\end{proof}

\subsection{Sharpness of the local estimates} \label{s:sharp}
The estimate \eqref{e:me} is sharp in the sense that the exponent of $\ep$ in \eqref{e:me} cannot be made larger. To show this, we begin with a general fact regarding $L^p-L^q$ norm of  Fourier multipliers. 
\begin{lemma}\label{l:adj_same}
Let $1\le p,q \le \infty$. For $a\in L^\infty(\R^d)$,
\begin{equation}\label{e:same}
	\|a(D)\|_{p\to q}=\|\overline{a} (D)\|_{p\to q}.
\end{equation}
Consequently, 
\begin{equation}\label{e:re_im}
	\|\RE a(D)\|_{p\to q} \le \|a(D)\|_{p\to q} \ \ \text{and} \ \ \|\IM a(D)\|_{p\to q} \le \|a(D)\|_{p\to q}.
\end{equation}
\end{lemma}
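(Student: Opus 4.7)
The strategy is to exhibit $\overline{a}(D)$ as a conjugate of $a(D)$ by two different $L^p$-isometries—complex conjugation of functions and spatial reflection. Since both maps are bijective from $\mathcal S(\R^d)$ to itself and preserve every $L^p$ norm, conjugating $a(D)$ by either of them leaves $\|a(D)\|_{p\to q}$ unchanged, so the whole argument reduces to tracking how the symbol transforms under each conjugation.

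First I will verify, for $f\in \mathcal S(\R^d)$, the identity
\[
\overline{a(D)\overline{f}}(x) = [\,\overline{a(-\cdot)}\,](D)f(x).
\]
This follows from the paper's Fourier convention $\wh g(\xi)=\int e^{-ix\cdot\xi}g(x)\,dx$ together with the elementary relation $\wh{\overline{f}}(\xi)=\overline{\wh f(-\xi)}$, after which the claim drops out by substituting $\xi\mapsto-\xi$ in the inverse Fourier integral. Because $f\mapsto\overline{f}$ is an (antilinear) isometry on every $L^p$, this identity gives
\[
\|a(D)\|_{p\to q} = \|[\,\overline{a(-\cdot)}\,](D)\|_{p\to q}.
\]
Second, for any bounded symbol $b$ and the spatial reflection $Rf(x)=f(-x)$, a one-line calculation with $\wh{Rf}(\xi)=\wh f(-\xi)$ yields $R\,b(D)\,R=[b(-\cdot)](D)$, and since $R$ is an $L^p$-isometry,
\[
\|[b(-\cdot)](D)\|_{p\to q}=\|b(D)\|_{p\to q}.
\]
Applying this relation with $b(\xi)=\overline{a(-\xi)}$, so that $b(-\xi)=\overline{a(\xi)}$, and combining with the previous step proves \eqref{e:same}.

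Finally, \eqref{e:re_im} is an immediate consequence of \eqref{e:same} and the triangle inequality: since $\RE a = \tfrac12(a+\overline a)$ and $\IM a=\tfrac1{2i}(a-\overline a)$, one has $\RE a(D)=\tfrac12(a(D)+\overline a(D))$ and similarly for $\IM a(D)$, so each has operator norm at most $\tfrac12(\|a(D)\|_{p\to q}+\|\overline a(D)\|_{p\to q})=\|a(D)\|_{p\to q}$. No real obstacle is anticipated: the proof is pure Fourier bookkeeping, and the only mild subtlety is ensuring that the pointwise identities on $\mathcal S(\R^d)$ translate directly to the operator-norm equalities, which is automatic since the supremum in the definition of $\|\cdot\|_{p\to q}$ is already taken over $\mathcal S(\R^d)$.
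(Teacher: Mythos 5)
Your proof is correct and follows essentially the same idea as the paper's: $\overline{a}(D)$ is realized as a conjugate of $a(D)$ by an $L^p$-isometry built from complex conjugation and spatial reflection. The paper compresses this into the single substitution $h(x)=\overline{f(-x)}$ (giving $\overline{a}(D)f(x)=\overline{a(D)h(-x)}$ in one line), whereas you factor it into two successive conjugations and track the symbol through each; the content is identical, and your spelled-out triangle-inequality argument for \eqref{e:re_im} is exactly what the paper leaves as ``clear.''
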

\begin{proof}
By the definition, $\overline{a}(D)f(x)= \overline{a(D)h(-x)}$ where $h$ is defined by $\wh h(\xi)= \overline{\wh f(\xi)}$ so that $h(x)=\overline{f(-x)}$. Hence
\[	\|\overline{a}(D)f\|_{L^q} = \|a(D)h\|_{L^q} \le \|a(D)\|_{p\to q} \|h\|_{L^p}= \|a(D)\|_{p\to q} \|f\|_{L^p},	\]
which shows $\|\overline{a} (D)\|_{p\to q}\le \|a(D)\|_{p\to q}$.  Similarly,  $\|a (D)\|_{p\to q}\le \| \overline{a}(D)\|_{p\to q}$, and this gives the identity \eqref{e:same}. The inequalities in \eqref{e:re_im} are clear. 
\end{proof}

Sharpness of the estimate \eqref{e:me1} can be proved by a Knapp type example adapted to the cylinder $S^{d-2}\times[1/2,2]\subset \R^d$.
\begin{prop}\label{p:example1}
Let $d$ and $k$ be positive integers and let $1\le p,q\le\infty$. Then, if $0<\ep\le\delc$ for some small $\delc>0$, 
\[
\|m_\ep(D)\|_{p\to q} \gtrsim \ep^{\frac{d+2}2(\frac1p-\frac1q)-k}.
\] 
\end{prop}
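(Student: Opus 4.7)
The plan is to use a Knapp-type example concentrated near the cylinder $S^{d-2}\times\{|\tau|\sim\ep\}\subset\R^{d-1}\times\R$, on which $m_\ep$ attains its maximal size. On $\supp m_\ep$ the denominator $D(\eta,\tau):=(|\eta|^2-1+\tau^2)+2i\tau$ has $|\IM D|=2|\tau|\sim\ep$, so $|D|\gtrsim\ep$ and $|m_\ep|\lesssim\ep^{-k}$ throughout; the bound is saturated on the slab $\{|\eta|^2-1+\tau^2=O(\ep),\,|\tau|\sim\ep\}$, so a test function Fourier-concentrated there should realize the claimed lower bound.

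Writing $\eta=(\eta_1,\eta'')\in\R\times\R^{d-2}$ and fixing a small $c=c(k)\in(0,1)$, I take
\[ R_\ep := \bigl\{(\eta,\tau):|\eta_1-1|\le c\ep,\ |\eta''|\le c\sqrt\ep,\ |\tau-\tfrac{3\ep}{4}|\le c\ep\bigr\}, \]
which has volume $|R_\ep|\sim c^d\ep^{(d+2)/2}$. The identity $|\eta|^2-1=2(\eta_1-1)+(\eta_1-1)^2+|\eta''|^2$ forces $|\eta|^2-1=O(c\ep)$ on $R_\ep$, so $D/\ep$ stays in a small neighbourhood of $3i/2$. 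Hence $|m_\ep|\sim\ep^{-k}$ on $R_\ep$; by shrinking $c$ further, depending on $k$, I can arrange $|\arg m_\ep-\arg((-i)^k)|<\pi/4$ on $R_\ep$, so the argument of $m_\ep$ is nearly constant there.

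For the test function I pick $\phi\in C^\infty_0(B_1(0))$ with $\phi\ge 0$ and $\phi\not\equiv 0$, and set
\[ \wh f(\eta_1,\eta'',\tau) := \phi\Bigl(\tfrac{\eta_1-1}{c\ep},\,\tfrac{\eta''}{c\sqrt\ep},\,\tfrac{\tau-3\ep/4}{c\ep}\Bigr), \]
so that $\supp\wh f\subset R_\ep$. A change of variables exhibits $f$ as a modulated Schwartz function whose amplitude profile is $\check\phi$ dilated onto the dual box, yielding $\|f\|_p\sim\ep^{(d+2)(1-1/p)/2}$ uniformly for $1\le p\le\infty$. Pairing with $\bar f$ and invoking Plancherel,
\[ \Bigl|\int m_\ep(D)f\cdot\bar f\,dx\Bigr|=(2\pi)^{-d}\Bigl|\int m_\ep\,|\wh f|^2\,d\xi\Bigr|\gtrsim\ep^{-k}\,\|\wh f\|_2^2\sim\ep^{-k+(d+2)/2}, \]
since the near-constant phase of $m_\ep$ on $\supp\wh f$ prevents cancellation. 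Then H\"older's inequality gives $\|m_\ep(D)f\|_q\ge|\int m_\ep(D)f\cdot\bar f\,dx|/\|f\|_{q'}\gtrsim\ep^{-k+(d+2)(1-1/q)/2}$, and dividing by $\|f\|_p$ produces the claimed bound $\ep^{(d+2)(1/p-1/q)/2-k}$.

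The main subtlety is the phase control on $R_\ep$: on the full slab $\{|D|\sim\ep\}$ the argument of $D$ sweeps an arc of length $\sim\pi$, and raising to the $k$-th power amplifies the variation to $\sim k\pi$, so shrinking the box by a factor depending on $k$ is essential for the pairing to add constructively. Alternatively, one could invoke Lemma \ref{l:adj_same} to replace $m_\ep$ by $\RE m_\ep$ or $\IM m_\ep$, reducing to a real-valued multiplier of comparable operator norm, but an analogous phase analysis is still required to identify a sub-region on which the reduced multiplier keeps one sign.
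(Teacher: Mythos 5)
Your proposal is correct and takes a genuinely different route from the paper's. The paper first rescales $\tau\mapsto\ep\tau$ to pass to $\wt m_\ep$, then invokes \eqref{e:re_im} to reduce matters to the \emph{real} multiplier $\IM\wt m_\ep$, writes out $\IM\wt m_\ep$ as the alternating sum \eqref{e:imag}, and shows the leading term dominates once the box parameter $\delta_\circ$ is small, producing a lower bound $\gtrsim\delc\ep^{-k}$ of definite sign on $Q_\ep$; the conclusion is then extracted by a pointwise Knapp estimate showing $|\IM\wt m_\ep(D)f_\ep(x)|\gtrsim|Q_\ep|\delc\ep^{-k}$ for $x$ in the dual slab $S_\ep$. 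You bypass both the passage to the imaginary part and the alternating-sum bookkeeping by observing that on $R_\ep$ the denominator $D=(|\eta|^2-1+\tau^2)+2i\tau$ is dominated by its imaginary part $2i\tau$, so $\arg m_\ep=-k\arg D$ deviates from $-k\pi/2$ by only $O(ck)$; the positivity of $|\wh f|^2$ then does all the work via $\int m_\ep(D)f\cdot\overline f\,dx=(2\pi)^{-d}\int m_\ep|\wh f|^2\,d\xi$ and H\"older. In effect you replace the paper's pointwise Knapp calculation with the duality/Plancherel pairing, which makes explicit the phase control that the paper handles implicitly through the sign analysis of $\IM\wt m_\ep$; your version is arguably cleaner, and the exponent arithmetic checks out.

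One small fix is needed. You center the $\tau$-window at $3\ep/4$, which silently requires $\psi$ to be bounded below near $t=3/4$, and the standing hypotheses on the Littlewood--Paley cutoff (a nonnegative $C_0^\infty([-2,-\tfrac12]\cup[\tfrac12,2])$ function with $\sum_{j}\psi(2^{-j}t)=1$) do not guarantee this. Centering at $\tau\approx\ep$ instead is safe: the partition of unity gives $\psi(1)+\psi(\tfrac12)=1$, and $\psi(\tfrac12)=0$ by the support condition and continuity, so $\psi(1)=1$ and hence $\psi\gtrsim1$ on a neighborhood of $t=1$. With the $\tau$-window recentered there, the rest of your argument goes through unchanged.
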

\begin{proof} By \eqref{e:scale} it is enough to prove 
\[	\|\wt m_\ep(D)\|_{p\to q}\gtrsim \ep^{\frac d2(\frac1p-\frac1q)-k}, 	\] 
and furthermore, by the second inequality in \eqref{e:re_im}, we need only show  
\begin{equation}\label{e:lower}
	\| \IM \wt m_\ep (D)\|_{p\to q}\gtrsim \ep^{\frac d2(\frac1p-\frac1q)-k}.
\end{equation}
We note that $\IM \wt m_\ep(\eta, \tau)$ is equal to 
\begin{equation}\label{e:imag}
 \frac{\psi_0(\ep_\circ^{-1}(1-|\eta|^2)) \psi(\tau)}{ ((|\eta|^2-1+\ep^2\tau^2)^2 + 4\ep^2 \tau^2 )^k} \sum_{l=1}^{\lfloor \frac{k+1}2\rfloor} (-1)^l\binom{k}{2l-1} (|\eta|^2-1+\ep^2\tau^2 )^{k-2l+1} (2\ep\tau)^{2l-1}.
\end{equation}

Let us choose a nonnegative smooth function $\phi$ on $\R$ such that $\supp \phi \subset [1/2,2]$ and $\phi=1$ on $[1,3/2]$, and define 
\[ \wh {f_\ep} (\eta, \tau) = \phi(\tau) \phi \bigg(\frac{\eta_{d-1}-1}{\delta_\circ \ep} \bigg)\prod_{j=1}^{d-2} \phi\bigg(\frac{\eta_j}{\sqrt{\delta_\circ \ep}}\bigg)	\]
for a small constant $\delta_\circ>0$ and $0<\ep\le \delc$. It is clear that 
\begin{equation}\label{e:fpn}
	\|f_\ep\|_{L^p} \sim (\delta_\circ \ep)^{\frac d2-\frac d{2p}}. 
\end{equation}

If $(\eta,\tau)\in\supp\wh {f_\ep}$, we have $\tau\sim 1$ and $|\eta|^2-1+\ep^2\tau^2 \sim \delta_\circ \ep	$. This yields  
\[	|\IM \wt m_\ep(\eta, \tau)| \gtrsim \frac1{\ep^{2k}} \bigg( \delta_\circ \ep^k - \sum_{l=2}^{\lfloor \frac{k+1}2\rfloor} \delta_\circ^{2l-1} \ep^k\bigg)  \sim \delta_\circ\ep^{-k} \]
if 
\[	(\eta,\tau)\in Q_\ep:= \big[\sqrt{\delta_\circ\ep}, \,\tfrac32\sqrt{\delta_\circ\ep}\, \big]^{d-2} \times \big[ 1+ \delta_\circ\ep, \, 1+\tfrac32\delta_\circ\ep \big] \times \big[1,\, \tfrac32\big]	\] 
whenever $\delta_\circ$ is small enough. Clearly, $\IM \wt m_\ep$ is either negative or positive on $Q_\ep$. Hence, if $x$ lies in the set
\[	S_\ep:=\big\{x\in \R^d\colon |x_d|\le 10^{-3}, \, |x_{d-1}|\le \ep^{-1}, \, |x_j|\le \ep^{-\frac12} \  \text{for} \  1\le j\le d-2 \big\}	\]
for $0<\ep\le \delc$ and $\delta_\circ$ is small enough, then 
\begin{align*}
	&|\IM \wt m_\ep (D)f_\ep (x)| \\
	&\ge \bigg| \iint \cos \Big( x_d\tau + x_{d-1}(\eta_{d-1}-1) + \sum_{j=1}^{d-2} x_j\eta_j \Big) \IM \wt m_\ep (\eta, \tau) \widehat{f_\ep}(\eta,\tau) d\eta d\tau \bigg| \\
	&\gtrsim |Q_\ep| \delta_\circ \ep^{-k} \sim (\delta_\circ\ep)^{\frac d2} \delta_\circ\ep^{-k}.
\end{align*}
Taking a sufficiently small $\delta_\circ$, we have
\[	\|\IM \wt m_\ep (D)f_\ep\|_{L^q(\R^d)} \ge \|\IM \wt m_\ep (D)f_\ep\|_{L^q(S_\ep)} \gtrsim \ep^{-\frac d{2q}} \ep^{\frac d2-k} 	\]
for $0<\ep\le \delta_\circ$. Combined with \eqref{e:fpn}, this implies
\[ \|\IM \wt m_\ep (D)\|_{p\to q} \ge \frac{\|\IM \wt m_\ep (D)f_\ep\|_{L^q(\R^d)}}{\|f_\ep\|_{L^p(\R^d)}} \gtrsim \ep^{\frac d2(\frac1p-\frac1q)-k},	\] 
which gives the  desired estimate \eqref{e:lower}.
\end{proof}

We can also prove that the estimate \eqref{e:me} is sharp. In particular, the estimate \eqref{e:me1} is sharp when $\frac1q =\frac{d-2}d(1-\frac1p)$.
\begin{prop}\label{p:example2}
Let $d$, $k$ be positive integers, $1\le p,q\le\infty$, and $0<\ep\ll 1$. Then we have
\begin{equation}\label{e:nec2}
\|m_\ep(D)\|_{p\to q} \gtrsim  \ep^{\frac{d}p-\frac1q -\frac{d-2+2k}2}.
\end{equation}
\end{prop}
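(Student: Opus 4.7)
The plan is to construct a test function $f_\ep$ whose frequency support is an \emph{annular} neighborhood of the critical cylinder $S^{d-2}\times\{\tau\sim\ep\}$, in contrast to the single Knapp cap used for Proposition \ref{p:example1}; the two extremal configurations produce different exponents in $\ep$, and this one picks up precisely the exponent in \eqref{e:nec2}. Fix a nonnegative bump $\phi\in C_c^\infty((1/2,2))$ and set
\[
    \wh f_\ep(\eta,\tau) := \phi\bigl(\ep^{-1}(1-|\eta|^2)\bigr)\,\phi(\ep^{-1}\tau),
\]
so that $f_\ep=g_\ep\otimes h_\ep$ factors into independent pieces in $\R^{d-1}$ and $\R$.

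For the denominator of the desired ratio, $h_\ep(t)=\ep\check\phi(\ep t)$ gives $\|h_\ep\|_{L^p(\R)}\sim\ep^{1-1/p}$, while polar coordinates with the change of variables $\rho^2=1-\ep r$ cast the spatial factor as $g_\ep(y)=\tfrac{\ep}{2}\int\phi(r)(1-\ep r)^{(d-3)/2}\wh{d\sigma_{S^{d-2}}}(\sqrt{1-\ep r}\,y)\,dr$. Combining the oscillatory asymptotic $\wh{d\sigma_{S^{d-2}}}(z)\sim|z|^{-(d-2)/2}$ with iterated integration by parts in $r$ to capture rapid decay when $\ep|y|\gg 1$ yields the envelope $|g_\ep(y)|\sim\ep\min(1,|y|^{-(d-2)/2})$ on $|y|\lesssim\ep^{-1}$, with rapid decay beyond. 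Direct integration produces $\|g_\ep\|_{L^p(\R^{d-1})}\sim\ep^{d/2-(d-1)/p}$ in the range $1\le p<\tfrac{2(d-1)}{d-2}$, which comfortably contains $\mathcal T^d_k$, and hence $\|f_\ep\|_{L^p(\R^d)}\sim\ep^{d/2+1-d/p}$.

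For the numerator, the same substitutions $1-|\eta|^2=\ep r$, $\tau=\ep u$ collapse the denominator of $m_\ep$ to $\ep^k(-r+\ep u^2+2iu)^k$ and give
\[
    m_\ep(D)f_\ep(y,t)=\tfrac{\ep^{2-k}}{2}\!\int\frac{\phi(r)\phi(u)\psi(u)(1-\ep r)^{(d-3)/2}}{(-r+\ep u^2+2iu)^k}\wh{d\sigma_{S^{d-2}}}\bigl(\sqrt{1-\ep r}\,y\bigr)e^{it\ep u}\,dr\,du.
\]
At $(y,t)=(0,0)$, letting $\ep\to 0$ leaves $\ep^{2-k}C_0$ with $C_0=\tfrac{|S^{d-2}|}{2}\int\phi(r)\phi(u)\psi(u)(-r+2iu)^{-k}\,dr\,du$, whose real part has a definite sign when $\phi,\psi\ge 0$ are supported on the positive reals; so $|C_0|\gtrsim 1$. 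Elementary bounds give $|\partial_y m_\ep(D)f_\ep|\lesssim\ep^{2-k}$ and $|\partial_t m_\ep(D)f_\ep|\lesssim\ep^{3-k}$ (using that the $\xi$-support of $\wh f_\ep$ has volume $\sim\ep^2$ and $m_\ep$ has size $\ep^{-k}$ there), so the pointwise bound $|m_\ep(D)f_\ep(y,t)|\gtrsim\ep^{2-k}$ persists on a box $\{|y|\le R_0,\,|t|\le\delta\ep^{-1}\}$ of measure $\gtrsim\ep^{-1}$. This yields $\|m_\ep(D)f_\ep\|_{L^q}\gtrsim\ep^{2-k-1/q}$ for every $q\in[1,\infty]$.

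Dividing the two bounds produces $\|m_\ep(D)\|_{p\to q}\gtrsim\ep^{d/p-1/q-(d-2+2k)/2}$, as required. The main technical obstacle is the stationary-phase analysis of $g_\ep$ through the transition $|y|\sim\ep^{-1}$, where the oscillations from $\wh{d\sigma_{S^{d-2}}}$ and from the $r$-dependence of $\sqrt{1-\ep r}\,|y|$ start to interfere; a dyadic decomposition of $|y|$ combined with iterated integration by parts (or a Van der Corput-type bound) is needed to make the envelope estimate for $g_\ep$ rigorous uniformly in $\ep$. Everything else --- the nonvanishing of $C_0$, the derivative bounds underlying the persistence of the pointwise estimate on the claimed box, and the final exponent arithmetic --- reduces to routine, if careful, computation.
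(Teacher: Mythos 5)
Your construction is a genuinely different route from the paper's. The paper works with the rescaled multiplier $\wt m_\ep$, tests it against a \emph{fixed} Schwartz function whose Fourier support is an $O(1)$-neighborhood of $S^{d-2}\times\{|\tau|\sim 1\}$, and then wrestles with the stationary-phase structure of $\wt m_\ep(D)f$ on the large annulus $|y|\sim\ep^{-1}$; that analysis requires the integration-by-parts scheme (to avoid the alternating-sign problem in $\IM\wt m_\ep$), the Bessel asymptotics, and the careful bounds in Lemmas \ref{lem_Ij} and \ref{lem_add}. You instead test $m_\ep$ directly against a shrinking datum $f_\ep$ whose Fourier support is the $\ep$-thin annular slab where $m_\ep$ is of size $\ep^{-k}$, so that $m_\ep(D)f_\ep$ concentrates near the origin, and the lower bound is obtained by a value-at-a-point plus first-derivative argument. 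These are dual perspectives (your construction controls the $\tfrac dp$ exponent, the paper's controls the $\tfrac1q$ exponent and then dualizes), and yours avoids most of the oscillatory-integral work in Section~\ref{s:pf_lower}. The $L^p$ arithmetic, the Jacobian factors in the polar/cylindrical change of variables, the identity $|\eta|^2-1+\tau^2+2i\tau=\ep(-r+\ep u^2+2iu)$, and the derivative bounds $|\partial_y m_\ep(D)f_\ep|\lesssim\ep^{2-k}$, $|\partial_t m_\ep(D)f_\ep|\lesssim\ep^{3-k}$ all check out, and the final exponent matches \eqref{e:nec2}.

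There is, however, one genuine gap, and it is not the one you flag. The nonvanishing of
\[
   C_0=\int\frac{\phi(r)\,\tilde\phi(u)}{(-r+2iu)^k}\,dr\,du
\]
is \emph{not} a consequence of $\phi,\tilde\phi\ge 0$ being supported on $(\tfrac12,2)$. Over that box $\arg(-r+2iu)$ ranges over an interval of length roughly one radian inside $(\tfrac\pi2,\pi)$, so $\arg\big((-r+2iu)^{-k}\big)$ sweeps an interval of length $\sim k$. For $k\ge 2$ the real part of $(-r+2iu)^{-k}$ already changes sign on the box, and for moderate $k$ the argument covers more than $2\pi$, so neither the real nor the imaginary part of $C_0$ is sign-definite a priori. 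The fix is easy but must be made explicit: choose $\phi$ (and hence $\tilde\phi=\psi\phi$) supported in $[1-\delta,1+\delta]$ with $\delta$ small depending on $k$, so that $\arg\big((-r+2iu)^{-k}\big)$ varies by less than, say, $\pi/2$; then $\RE\big(e^{-i\theta_0}(-r+2iu)^{-k}\big)>0$ on the support for $\theta_0=-k\arg(-1+2i)$, whence $|C_0|\gtrsim\RE\big(e^{-i\theta_0}C_0\big)>0$. You should also state explicitly that only the upper bound $|g_\ep(y)|\lesssim\ep\min(1,|y|^{-(d-2)/2})$ (plus rapid decay for $|y|\gg\ep^{-1}$ from non-stationary phase in $r$) is needed, since the Bessel asymptotics oscillate and the two-sided envelope you write does not literally hold pointwise; this is a cosmetic issue, as the $L^p$ bound on $\|f_\ep\|_p$ only needs the upper estimate. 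Once the $C_0$ point is repaired, your argument gives the stated Proposition~\ref{p:example2} by a shorter route than the paper's.
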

By \eqref{e:scale} and duality, the estimate  \eqref{e:nec2} follows from
\begin{equation}\label{eq_goal}
\|\wt m_\ep(D)\|_{p\to q} \gtrsim  \ep^{-\frac{d-1}q + \frac{d}2-k}.
\end{equation}
When $k=1$, it is relatively simple to obtain the lower bound \eqref{eq_goal} by analyzing $\IM \wt m_\ep$ and using \eqref{e:re_im}. Indeed, this was done in \cite{jkl18}.  For larger $k$, however,  $\IM \wt m_\ep$ is given by a summation of $\sim k/2$ terms with \emph{alternating} signs (see \eqref{e:imag}). Furthermore, it can be shown that each of those yields a Fourier multiplier whose $L^p-L^q$ norm is $\gtrsim \ep^{-\frac{d-1}q +\frac d2-k}$. In other words, there is no `leading term' in the alternating sum \eqref{e:imag}. This makes it difficult to determine the lower bound for $\|\IM \wt m_\ep(D)\|_{p\to q}$. 

We get around this problem and prove \eqref{eq_goal} reducing the order of denominator of $\wt m_\ep$ by integration by parts. As the proof is rather involved, we shall postpone it until the last section; see Section \ref{s:pf_lower}.

\subsection{Proof of Theorem \ref{t:carl}}

As mentioned before, the condition \eqref{e:gap} is necessary for the Carleman inequality \eqref{carl}. In the preliminary decomposition \eqref{e:prim_decom}, the global part $m_G(D)$ is bounded from $L^p(\R^d)$ to $L^q(\R^d)$ if $p,q\in(1,\infty)$ satisfy  \eqref{e:gap}. The other condition in \eqref{e:cond} is determined by the local part $m_L(D)$.  

\begin{prop}\label{p:local}
Let $k$ be a positive integer such that $k< d/2$, and let $1< p, q<\infty$. Then
 \begin{equation}\label{e:local_est}
\|m_L(D)f\|_{L^q(\R^d)} \lesssim \|f\|_{L^p(\R^d)}
\end{equation}
if and only if
\begin{equation}\label{e:penta}
\tfrac 1p-\tfrac1q \ge \tfrac{2k}{d+2}, \ \ \tfrac dp- \tfrac 1q \ge \tfrac{d-2+2k}2, \ \ \text{and} \ \ \tfrac dq-\tfrac1p \le \tfrac{d-2k}2.
\end{equation}
\end{prop}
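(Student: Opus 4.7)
My plan is to analyze the dyadic decomposition $m_L = \sum_{\ep \in \mathbb{D}} m_\ep$, combining the sharp $\ep$-dependent bounds of Corollary \ref{p:suff} (and its dual via Lemma \ref{l:adj_same}) with interpolation, and to use the sharp examples in Propositions \ref{p:example1} and \ref{p:example2} for necessity.

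For \emph{necessity}, the key observation is that the $\tau$-support of $m_\ep$ is the dyadic annulus $\{|\tau| \sim \ep\}$, and these annuli are essentially disjoint across $\ep \in \mathbb{D}$. Thus for each $\ep \in \mathbb{D}$, applying $m_L(D)$ to the rescaled Knapp-type extremizers $f_\ep$ of Propositions \ref{p:example1} and \ref{p:example2} (related to the $\wt m_\ep$ extremizers via \eqref{e:scale}) picks up only $m_\ep(D) f_\ep$ plus $O(1)$ neighboring dyadic contributions. This yields the uniform comparison $\|m_\ep(D)\|_{p\to q} \lesssim \|m_L(D)\|_{p\to q}$. Plugging in the two lower bounds $\ep^{\frac{d+2}{2}(\frac1p - \frac1q) - k}$ and $\ep^{\frac{d}{p} - \frac1q - \frac{d-2+2k}{2}}$ from those propositions forces both exponents to be nonnegative, yielding the first two inequalities in \eqref{e:penta}; the third follows from the second applied to the dual exponents $(q', p')$ together with the identity $\|m_L(D)\|_{L^p\to L^q} = \|m_L(D)\|_{L^{q'}\to L^{p'}}$ coming from Lemma \ref{l:adj_same}.

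For \emph{sufficiency}, when $(\tfrac1p, \tfrac1q) \in \mathcal{T}_k^d$ and the second inequality in \eqref{e:penta} is strict, Corollary \ref{p:suff} gives $\|m_\ep(D)\|_{p\to q} \lesssim \ep^a$ with $a = \tfrac{d}{p} - \tfrac1q - \tfrac{d-2+2k}{2} > 0$, and the triangle inequality together with a geometric sum over $\ep \in \mathbb{D}$ produces $\|m_L(D)\|_{p\to q} < \infty$. Duality via Lemma \ref{l:adj_same} handles the analogous case $(\tfrac1p, \tfrac1q)' \in \mathcal{T}_k^d$ with strict third inequality. Since every interior point of \eqref{e:penta} lies in the convex hull of these two regions (even those outside $\mathcal{T}_k^d \cup (\mathcal{T}_k^d)'$), Riesz--Thorin interpolation extends the strong-type bound to the full open interior of \eqref{e:penta}.

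The \emph{main obstacle} is reaching the closed boundary of \eqref{e:penta} inside $(0,1)^2$, and especially the vertices $V_1 = \ell_1 \cap \ell_2$ and $V_2 = \ell_1 \cap \ell_3$, where two of the three inequalities saturate and the naive $\ep$-summation gives a logarithmic divergence. I plan to establish restricted weak-type $L^{p,1} \to L^{q,\infty}$ bounds at $V_1$ and $V_2$ by summing the individual $(j, \ep)$-level weak-type estimates implicit in the proof of Proposition \ref{p:tilde} (which descend from the endpoint weak-type Bochner--Riesz estimate at $B_k^d$ in Lemma \ref{lem_BR}). Real interpolation (Marcinkiewicz) between these vertex weak-type bounds and the strong-type bounds already established at nearby interior points then produces strong-type $L^p \to L^q$ estimates along the open boundary segments of $\ell_1, \ell_2, \ell_3$, closing the sufficiency proof.
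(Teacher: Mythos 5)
Your necessity argument matches the paper's: from $\|m_L(D)\|_{p\to q}<\infty$ one deduces $\sup_\ep\|m_\ep(D)\|_{p\to q}<\infty$ by frequency localization, then Propositions \ref{p:example1} and \ref{p:example2} (plus duality via Lemma \ref{l:adj_same}) give the three inequalities in \eqref{e:penta}. Your sufficiency argument in the strict interior, via geometric summation in $\ep$ when $a>0$ and interpolation, is also sound.

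However, the boundary of \eqref{e:penta} is where the proof actually lives, and there you have a genuine gap. At a point of $[E_k^d,F_k^d]$, where the second inequality is saturated, Corollary \ref{p:suff} gives exactly $\|m_\ep(D)\|_{p\to q}\lesssim 1$ with \emph{no} decay in $\ep$, so the triangle inequality over the infinitely many $\ep\in\mathbb D$ diverges. Your plan to repair this by ``summing the individual $(j,\ep)$-level weak-type estimates'' at the vertices will not go through: $L^{q,\infty}$ is only a quasi-norm and the uniform restricted-weak-type bounds for each $m_\ep(D)$ do not sum over $\ep$ any better than the strong-type bounds do --- there is no $\ep$-decay in Lemma \ref{lem_BR} to harvest. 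The missing idea, which the paper uses and you do not, is the \emph{Littlewood--Paley square-function argument in the $\tau=\xi_d$ variable}: because the $m_\ep$ have dyadically separated $\tau$-supports, one writes
\[
\|m_L(D)f\|_{L^q}\sim\Bigl\|\Bigl(\sum_{\ep\in\mathbb D}|\beta(D_d/\ep)m_\ep(D)f|^2\Bigr)^{1/2}\Bigr\|_{L^q}
\le\Bigl(\sum_{\ep}\|\beta(D_d/\ep)m_\ep(D)f\|_{L^q}^2\Bigr)^{1/2}
\]
(valid since $q\ge 2$ on $[E_k^d,F_k^d)$), applies $\|m_\ep(D)\|_{p\to q}\lesssim 1$ to each term, and closes with Minkowski ($p\le 2$) and the Littlewood--Paley inequality in $L^p$ to recover $\|f\|_{L^p}$. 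This orthogonality converts the divergent $\ell^1$-sum into a convergent $\ell^2$-sum of the localizations of $f$, which is the whole point. Once the estimate is proved on $[E_k^d,F_k^d)$ this way, the rest of the pentagon follows by the duality and interpolation steps you already have, together with the trivial $L^1\to L^\infty$ bound for the compactly frequency-supported $m_L$ (needed for the vertex $H$). As written, your proposal does not reach the boundary, and hence does not prove the ``if'' direction of \eqref{e:penta} as a closed condition.
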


Before proving the proposition we additionally define two points $E_k^d$ and $F_k^d$ in $\mathbf{Q}$. For every $(d,k) \in \mathbb N\times \mathbb N$ satisfying $1\le k< d/2$, we define 
\[	E_{k}^d = \big( \tfrac{d^2+2kd-4}{2(d+2)(d-1)}, \tfrac{(d-2)(d+2-2k)}{2(d+2)(d-1)} \big), \ F_{k}^d =\big(\tfrac{d-2+2k}{2d}, 0 \big).	\]
We note that $E_k^d$ and $F_k^d$ are on the line $\{(x,y)\in\mathbf{Q} \colon dx-y=\frac{d-2+2k}2 \}$, while $E_k^d$ and $(E_k^d)'$ are on the line $\{(x,y) \colon x-y=\tfrac{2k}{d+2} \}$. See Figures \ref{fig1} and \ref{fig2}.  The pairs $(\frac1p, \frac1q)$ satisfying the conditions in \eqref{e:penta} are contained in the closed pentagon $[E_{k}^d, F_{k}^d, (E_{k}^d)', (F_{k}^d)', H]$.  

The line segments $[E_k^d, F_k^d]$ and $\mathbb L_k^d:=\{(x,y)\in \mathrm{int}\,\mathbf{Q} \colon x-y=\frac{2k}d\}$ intersect in the interior of $\mathbf{Q}$ if and only if $k<\frac{d-2}2$. In this case, we denote the intersection point by $G_{k}^d$, that is, 
\[	G_{k}^d=\big(\tfrac{(d+2k)(d-2)}{2d(d-1)}, \tfrac{d-2k-2}{2(d-1)} \big).	\]
Combining the conditions \eqref{e:gap} and \eqref{e:penta}, we conclude that the Carleman inequality \eqref{carl} holds if and only if $(\frac1p, \frac1q)$ lies on the line
\[  \mathbb L_k^d \cap [E_{k}^d, F_{k}^d, (E_{k}^d)', (F_{k}^d)', H], \]
which is $[G_k^d, (G_k^d)']$ if $k<\frac{d-2}2$ and $\mathbb L_k^d$ if $\frac{d-2}2\le k<\frac d2$. Therefore, the proof of Theorem \ref{t:carl} is completed.

\begin{proof}[Proof of Proposition \ref{p:local}]
\begin{figure}
\captionsetup{type=figure,font=footnotesize}
\centering
\begin{tikzpicture} [scale=0.6]\scriptsize
	\path [fill=lightgray] (0,0)--(15/2,0)--(15/2,3/2)--(0, 6)--(0,0);
	\draw (15/4, 2) node{$\mathcal T_{2}^5$};
	\draw [<->] (0,10.7)node[left]{$\frac1q$}--(0,0) node[below]{$\frac12$} node[left]{$0$} node[above right]{$C$}--(10.7,0) node[below]{$\frac1p$};
	\draw (0,10) node[left]{$\frac12$} --(10,10)--(10,0) node[below]{$1$} node[above right]{$H$};
	\draw [dash pattern={on 2pt off 1pt}] (0,6)node[above right]{$A^5$} node[left]{$\frac3{10}$}--(10,0); 
	\draw [dash pattern={on 2pt off 1pt}] (4,10)node[above]{$(A^5)'$} --(10,0); 
	\draw [dash pattern={on 2pt off 1pt}] (15/2,0)node[below]{$\frac78$} node[above right]{$D_{2}^5$}--(15/2,3/2); \node[above] at (15/2-0.1,3/2+0.05) {$B_{2}^5$}; 
	\draw [dash pattern={on 2pt off 1pt}](10-3/2,10-15/2)--(10,10-15/2)node[right]{$(D_{2}^5)'$}; 
	\draw [dash pattern={on 2pt off 1pt}](4,0) node[below]{$\frac7{10}$} node[above right]{$F_{2}^5$}--(65/14,45/14) --(10-45/14,10-65/14) --(10,6) node[right]{$(F_{2}^5)'$}; 
	\node [left] at (10-3/2+0.03,10-15/2+0.05) {$(B_{2}^5)'$};
	\node[left] at (10-45/14,10-65/14) {$(E_{2}^5)'$};
	\node[above] at (65/14-0.1,45/14+0.1) {$E_{2}^5$};
	\draw [thick](6,0) node[below]{$\frac45$}--(10,4); 
	\end{tikzpicture}\caption{The thick line segment represents the optimal range of $(\frac1p,\frac1q)$ for which the Carleman inequality \eqref{carl} holds when $d=5$ and $k=2$.}\label{fig1}
\end{figure}
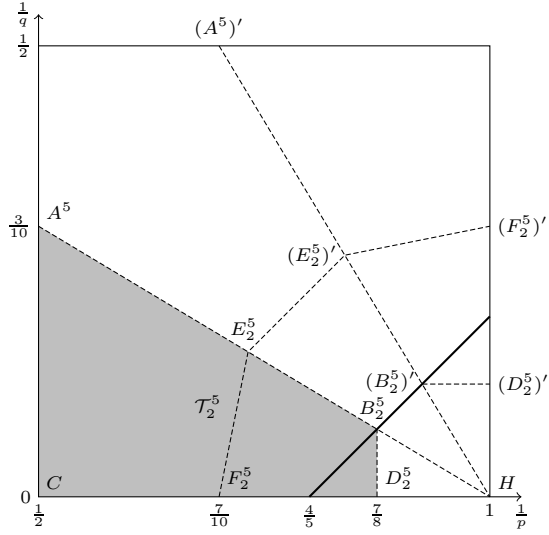
We first prove the sufficiency part. By duality the estimate \eqref{e:local_est} is equivalent to $\|\overline{m_L}(D)\|_{q'\to p'}\lesssim 1$. By Lemma \ref{l:adj_same} this is equivalent to $\|m_L(D)\|_{q'\to p'}\lesssim 1$. Thus, if we prove \eqref{e:local_est} for $(\frac1p,\frac1q)\in [E_{k}^d, F_{k}^d)$, then the same estimate follows for $(\frac1p,\frac1q)\in [E_{k}^d, F_{k}^d, (E_{k}^d)', (F_{k}^d)'] \setminus\{ F_k^d, (F_k^d)'\}$ by duality and interpolation. Moreover, since $m_L$ is supported in the ball of radius $2$ centered at the origin, it is easy to see $\|m_L(D)\|_{1\to \infty}\lesssim 1$. Indeed, for a cutoff function $\phi\in C^\infty_0(\R^d)$ such that $\phi=1$ on $\supp m_L$, we have $m_L(D) =(\phi^2 m_L)(D).$ So, from the Young inequality and \eqref{e:local_est} with $(\frac1p,\frac1q)\in [E^d_k,F^d_k)$ it follows that
\[ \|m_L(D)f\|_{L^\infty} \le \| \phi^\vee\|_{L^{q'}}\| (\phi m_L)(D)f\|_q\lesssim \|\phi(D)f\|_{L^p} \le \|\phi^\vee\|_{L^p}\|f\|_L^1\lesssim\|f\|_{L^1}. \]
 Again, interpolation gives \eqref{e:local_est} for all $1<p,q<\infty$ satisfying \eqref{e:penta}. Hence,  it is enough to prove \eqref{e:local_est} for $(\frac1p,\frac1q)\in [E_{k}^d, F_{k}^d)$.

\begin{figure}
\captionsetup{type=figure,font=footnotesize}
\centering
\begin{tikzpicture} [scale=0.6]\scriptsize
	\path [fill=lightgray] (0,0)--(5,0)--(5,25/7)--(0,50/7)--(0,0);
	\draw (2, 3) node{$\mathcal T_{2}^7$};
	\draw [<->] (0,10.7)node[left]{$\frac1q$}--(0,0) node[below]{$\frac12$} node[left]{$0$} node[above right]{$C$}--(10.7,0) node[below]{$\frac1p$};
	\draw (0,10) node[left]{$\frac12$} --(10,10)--(10,0) node[below]{$1$} node[above right]{$H$};
	\draw [dash pattern={on 2pt off 1pt}] (0,50/7)node[above right]{$A^7$} node[left]{$\frac5{14}$}--(10,0); 
	\draw [dash pattern={on 2pt off 1pt}] (10-50/7,10)node[above]{$(A^7)'$} --(10,0); 
	\draw [dash pattern={on 2pt off 1pt}] (5,0)node[below]{$\frac34$} node[above right]{$D_{2}^7$}--(5,25/7); 
	\draw [dash pattern={on 2pt off 1pt}](10-25/7,10-5)--(10,10-5)node[right]{$(D_{2}^7)'$}; 
	\draw [dash pattern={on 2pt off 1pt}](20/7,0) node[below]{$\frac9{14}$} node[above right]{$F_{2}^7$}--(95/27,125/27)--(10-125/27,10-95/27) --(10,10-20/7) node[right]{$(F_{2}^7)'$}; 
	\draw [dash pattern={on 2pt off 1pt}] (10/7,0) node[below]{$\frac47$}--(10,10-10/7); 
	\draw [thick] (65/21,5/3)--(10-5/3, 10-65/21); 
	\node[above right] at (10-125/27-0.2,10-95/27) {$(E_{2}^7)'$};
	\node[left] at (10-25/7-0.05,10-5) {{$(B_{2}^{7})'$}};
	\node[above] at (10-5/3-0.25, 10-65/21) {$(G_{2}^7)'$};
	\node[above] at (5,25/7+0.1) {$B_{2}^7$};
	\node[above] at (95/27-0.1,125/27+0.05) {$E_{2}^7$};
	\node[left] at (65/21-0.1,5/3) {$G_{2}^7$};
\end{tikzpicture}\caption{The thick line segment represents the optimal range of $(\frac1p,\frac1q)$ for which the Carleman inequality \eqref{carl} holds when $d=7$ and $k=2$.}\label{fig2}
\end{figure}

Let $\beta\in C_0^\infty([-4, -1/4]\cup [1/4, 4])$ be such that $\beta \psi=\psi$. Since $q\ge2$, using \eqref{e:mtau}, by the Littlewood--Paley  inequality and the Minkowski inequality we have
\begin{align*}
\|m_L(D)f\|_{L^q(\R^d)} &\sim \bigg\| \bigg( \sum_{\ep\in\mathbb D} \Big|\beta \Big(\frac{D_d}\ep\Big) m_\ep (D)f \Big|^2\bigg)^\frac12 \bigg\|_{L^q(\R^d)} \\
& \le \bigg( \sum_{\ep\in \mathbb D} \Big\| \beta \Big(\frac{D_d}\ep\Big) m_\ep (D)f  \Big\|_{L^q(\R^d)}^2 \bigg)^\frac12 .
\end{align*}
Combined with Corollary \ref{p:suff}, this gives  
\[	\|m_L(D)f\|_{L^q(\R^d)} \lesssim  \bigg( \sum_{\ep\in \mathbb D} \ep^{2(\frac dp -\frac1q -\frac{d-2+2k}2)} \Big\| \beta \Big(\frac{D_d}\ep\Big) f  \Big\|_{L^p(\R^d)}^2 \bigg)^\frac12	\]
if $(\frac1p,\frac1q)\in \mathcal T_{k}^d$. We notice that $[E_{k}^d, F_{k}^d)\subset \mathcal T_{k}^d$ and $\frac dp -\frac1q -\frac{d-2+2k}2=0$ if $(\frac1p, \frac1q)\in [E_{k}^d, F_{k}^d)$. Since $p\le 2$ the Minkowski inequality and the Littlewood--Paley inequality give
\[	\|m_L(D)f\|_{L^q(\R^d)} \lesssim \bigg\| \bigg( \sum_{\ep\in\mathbb D} \Big|\beta \Big(\frac{D_d}\ep\Big) f \Big|^2\bigg)^\frac12 \bigg\|_{L^p(\R^d)} \sim \|f\|_{L^p(\R^d)}	\]
whenever $(\frac1p, \frac1q)\in [E_{k}^d, F_{k}^d)$.

Now, we prove the necessity part. We need only show that the inequality \eqref{e:local_est} implies the first and the second inequalities in \eqref{e:penta} since the other in \eqref{e:penta} follows from the second via duality.  From the assumption \eqref{e:local_est} it follows $\|m_\ep(D)\|_{p\to q} \lesssim 1 $ uniformly in $\ep\in \mathbb D$. Hence, by Propositions \ref{p:example1} and \ref{p:example2} we have 
\[	\ep^{\frac{d+2}2(\frac1p-\frac1q)-k} \lesssim 1 , \ \  \ep^{\frac{d}p-\frac1q -\frac{d-2+2k}2} \lesssim 1	\]
for $\ep \ll 1$. Thus, considering the limiting case $\ep\to 0$ gives the first and the second inequalities in \eqref{e:penta}.
\end{proof}

\section{Unique continuation (proof of Theorem \ref{t:wucp})} \label{s:wucp}
For the differential inequality $|\Delta u|\le |Vu|$, the argument deducing the UCP from the Carleman inequality \eqref{e:carl-1} is well-known (\cite{KRS87}). One can prove Theorem \ref{t:wucp} combining Theorem \ref{t:carl} and an argument in \cite[Corollary 3.2]{KRS87}. We need only replace the Kelvin transform by an analogous point inversion transform preserving the polyharmonicity.

\begin{lemma}\label{l:kelvin}
For $0<s<\frac d2$  and $u$ supported away from the origin in $\R^d$, let  
\begin{equation}\label{e:kel}
 \mathcal T_{s}u (x) = |x|^{-d+2s} u\circ \Phi (x), \ \ x\ne 0, 
\end{equation}
where $\Phi(x) =|x|^{-2}x$. Then, we have 
\begin{equation}\label{eq_kelvin} 
(-\Delta)^s \mathcal T_{s} u(x)= |x|^{-d-2s} \big( (-\Delta)^s u\big) \circ \Phi (x),  \ \ x\ne 0.
\end{equation}
\end{lemma}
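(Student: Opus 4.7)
The plan is to reduce \eqref{eq_kelvin} to a conjugation identity for the Riesz potential, which is available precisely because the hypothesis $0<s<d/2$ places us in the range where $(-\Delta)^{-s}$ is realized as the Riesz potential $I_{2s}f := c_{d,s}\int|x-y|^{-d+2s}f(y)\,dy$. Introducing the companion weight $\mathcal T_{-s}v(x):=|x|^{-d-2s}v(\Phi(x))$, the target \eqref{eq_kelvin} reads $(-\Delta)^s\mathcal T_s u = \mathcal T_{-s}(-\Delta)^s u$. Setting $v=(-\Delta)^s u$ and using $I_{2s}(-\Delta)^s=\mathrm{Id}$ to write $u=I_{2s}v$, it suffices to prove the operator identity
\[
I_{2s}\mathcal T_{-s}v \;=\; \mathcal T_s I_{2s}v,
\]
and then apply $(-\Delta)^s$ to both sides.

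The engine for checking this is the conformal identity
\[
|x-\Phi(z)| \;=\; \frac{|x|}{|z|}\,|\Phi(x)-z|, \qquad x,z\in\R^d\setminus\{0\},
\]
which follows by squaring both sides and using $|\Phi(w)|=|w|^{-1}$. In the integral $I_{2s}\mathcal T_{-s}v(x)=c_{d,s}\int|x-y|^{-d+2s}|y|^{-d-2s}v(\Phi(y))\,dy$, I would substitute $y=\Phi(z)$; the Jacobian is $|z|^{-2d}$ and $|y|=|z|^{-1}$. The conformal identity converts $|x-\Phi(z)|^{-d+2s}$ into $|x|^{-d+2s}|z|^{d-2s}|\Phi(x)-z|^{-d+2s}$, and all remaining powers of $|z|$ cancel. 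What is left is exactly $|x|^{-d+2s}I_{2s}v(\Phi(x)) = \mathcal T_s I_{2s}v(x)$, which is what was wanted.

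The only delicate point is the justification of $(-\Delta)^s I_{2s}v=v$ for $v=(-\Delta)^s u$. Since $u$ is taken to be smooth and supported away from the origin---hence Schwartz---$v$ is smooth with decay like $|x|^{-d-2s}$ at infinity, so Riesz-potential inversion holds classically and the pointwise identity at $x\ne 0$ in \eqref{eq_kelvin} follows. I do not expect any other obstacle: the argument is a careful accounting of powers of $|x|$ and $|z|$ combined with a single application of the conformal property of the inversion $\Phi$.
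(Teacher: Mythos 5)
Your proposal is correct and follows essentially the same route as the paper: both reduce \eqref{eq_kelvin} to the conjugation identity for the Riesz potential $(-\Delta)^{-s}\big(|\cdot|^{-d-2s}\,v\circ\Phi\big)=|\cdot|^{-d+2s}\big((-\Delta)^{-s}v\big)\circ\Phi$ and verify it by the substitution $y\mapsto\Phi(y)$ together with the conformal identity $|\Phi(x)-\Phi(y)|=|x-y|/(|x||y|)$. The only cosmetic difference is that you run the change of variables from right to left rather than left to right and explicitly record the inversion formula $(-\Delta)^sI_{2s}=\mathrm{Id}$, which the paper leaves implicit.
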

Lemma \ref{l:kelvin} is already known. For example, see \cite[Proposition 2]{ADS} and \cite[Lemma 3]{DG}. Nevertheless, we provide a short proof different from those in \cite{ADS} and \cite{DG} for the sake of completeness. 
\begin{proof}[Proof of Lemma \ref{l:kelvin}]
If we set $f=(-\Delta)^s u$, then \eqref{eq_kelvin} is equivalent to 
\begin{equation} \label{e:kelvin}
|\cdot|^{-d+2s} \big((-\Delta)^{-s}f\big)\circ \Phi = (-\Delta)^{-s} \big( |\cdot|^{-d-2s} f\circ\Phi \big).
\end{equation}
Since 
\[	(-\Delta)^{-s}f(x) = c_{d,s} \int_{\R^d} \frac{f(y)}{|x-y|^{d-2s}} dy\]
and $|\det J\Phi(y)|=|y|^{-2d}$, 
\begin{align*}
|x|^{-d+2s} \big((-\Delta)^{-s}f\big)\circ \Phi (x) 
&= c_{d,s} |x|^{-d+2s} \int_{\R^d} \frac{f(y)}{|\frac x{|x|^2} -y|^{d-2s}} dy \\
&= c_{d,s} |x|^{-d+2s} \int_{\R^d} \frac{f\circ\Phi(y)|y|^{-2d}}{|\frac x{|x|^2} - \frac{y}{|y|^2}|^{d-2s}} dy.
\end{align*}
Since $|\frac x{|x|^2} - \frac{y}{|y|^2}|^2=|x|^{-2}|y|^{-2} |x-y|^2$, this is equal to 
\[	c_{d,s} \int_{\R^d} \frac{f\circ\Phi(y) |y|^{-d-2s}}{|x-y|^{d-2s}} dy = (-\Delta)^{-s} \big( |\cdot|^{-d-2s} f\circ\Phi \big)(x),	\]
and we obtain   \eqref{e:kelvin}.
\end{proof}

\begin{figure}
\captionsetup{type=figure,font=footnotesize}
\centering
\begin{tikzpicture} [scale=0.6]\scriptsize
	\path [fill=lightgray] ({20*cos(60)},20)--({20*cos(120)} ,20)--({20*cos(120)} ,791/40)--({20*cos(60)},791/40);
	\draw ({20*cos(60)},{20*sin(60)}) arc(60:120:20);
	\draw (0,20) circle [radius=6];
	\draw (0,20) circle [radius=3];
	\draw ({20*cos(60)},20)--({20*cos(120)} ,20);
	\draw ({20*cos(60)},791/40)--({20*cos(120)} ,791/40);
	\draw ({20*cos(60)}, 1591/80) node[right]{$S_\rho$};
	\draw ({20*cos(60)},{20*sin(60)}) node[below]{$S^{d-1}$};
	\draw [->] [dash pattern={on 2pt off 1pt}] (0,20)node[above]{$e_1$} --({6*cos(135)},{20+ 6*sin(135)}); 	\draw ({18/5 *cos(135)},{20+ 18/5*sin(135)}) node[above]{$\delta_1$};
	\draw [->] [dash pattern={on 2pt off 1pt}] (0,20) --({3*cos(30)},{20+ 3*sin(30)}); 	\draw ({9/5*cos(30)},{20+ 9/5*sin(30)}) node[above]{$\frac{\delta_1}2$};
	\node at (0,20) [circle,fill,inner sep=0.7pt]{};
\end{tikzpicture}\caption{Continuation of $u=0$ from $A_+(\delta_1)$ to $S_\rho\cap B(e_1,\frac{\delta_1}2)$ in the proof of Lemma \ref{lem_ucp}.}\label{fig_ucp}
\end{figure}
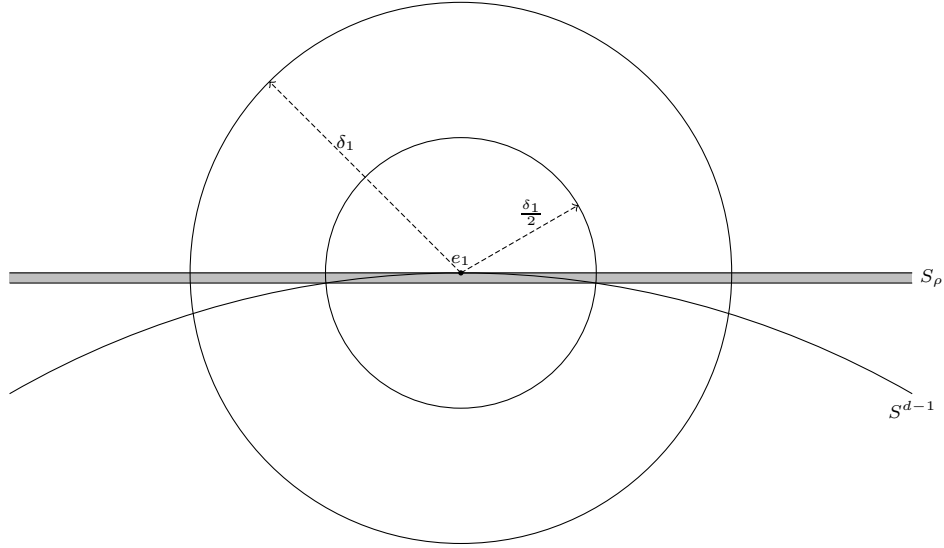

Using \eqref{carl}, Lemma \ref{l:kelvin}, and the argument in \cite[p. 345]{KRS87}, we obtain the following.
\begin{lemma}\label{lem_ucp}
Let $d\ge 3$ and $\Omega$ be a connected open set in $\R^d$ containing $S^{d-1}$. Let $k$, $V$, and $\mathbf X$ be given as in Theorem \ref{t:wucp}. Suppose that  $u\in \mathbf X$ satisfies \eqref{e:diff_ineq}. If $u$ vanishes in the annulus  $A_+(\delta_1):=\{x\in \R^d \colon 1<|x|<1+\delta_1\}$ for some $\delta_1>0$, then $u$ also vanishes in $A_-(\delta_2):=\{ x\in \R^d \colon 1-\delta_2<|x|<1\}$ for some $0<\delta_2<1$, and vice versa.
\end{lemma}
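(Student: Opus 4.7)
The plan is to prove the outer-to-inner implication directly via Theorem \ref{t:carl} and a family of radial cutoffs, and then to deduce the reverse via the involution property $\mathcal T_k^2 = \mathrm{id}$ of the point inversion from Lemma \ref{l:kelvin}. Throughout, I will fix $p\in(1,\infty)$ such that $u \in W^{2k,p}_{\mathrm{loc}}(\Omega)$ and \eqref{e:cond} holds (e.g.\ $1/p=(d+2k)/(2(d-1))$ when $k<(d-2)/2$), and let $q$ satisfy $1/p-1/q=2k/d$.

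Assume first that $u$ vanishes on $A_+(\delta_1)$. For each $\omega \in S^{d-1}$ and $\delta \in (0,\delta_1/4)$, I will take a radial cutoff $\chi(x)=\psi_\delta(|x|)$ with $\psi_\delta\in C_0^\infty(\R)$ equal to $1$ on $[1-\delta,\,1+\delta_1/4]$ and vanishing outside $[1-2\delta,\,1+\delta_1/2]$. Since $\chi u \in W_0^{2k,p}(\R^d)$, the Carleman inequality \eqref{carl} applies with $v=R\omega$; after splitting $\Delta^k(\chi u)=\chi\Delta^k u+[\Delta^k,\chi]u$, applying $|\Delta^k u|\le|Vu|$ together with H\"older's inequality on the gap condition, and absorbing the $\chi Vu$ contribution (which requires $C\|V\|_{L^{d/2k}(\supp\chi)}\le 1/2$, attainable by absolute continuity of the $L^{d/2k}$-integral for all $\delta\le\delta_0$ with $\delta_0$ small enough), one obtains
\[
\|e^{R\omega\cdot x}\chi u\|_{L^q(\R^d)} \lesssim \|e^{R\omega\cdot x}[\Delta^k,\chi]u\|_{L^p(\R^d)}.
\]
The commutator is supported on $\{1-2\delta<|x|<1-\delta\}\cup\{1+\delta_1/4<|x|<1+\delta_1/2\}$; since $u$ vanishes on the outer component (which lies in $A_+(\delta_1)$), only the inner component $T_\delta:=\{1-2\delta<|x|<1-\delta\}$ contributes, on which $\omega\cdot x\le|x|\le 1-\delta$. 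On the other hand, the ball $B_\epsilon:=B((1-\delta/2)\omega,\delta/4)$ is contained in $\{\chi=1\}$ and satisfies $\omega\cdot x\ge 1-3\delta/4$ throughout. Combining these,
\[
e^{R(1-3\delta/4)}\|u\|_{L^q(B_\epsilon)} \lesssim e^{R(1-\delta)}C_\chi\|u\|_{W^{2k,p}(T_\delta)},
\]
and sending $R\to\infty$ forces $u\equiv 0$ on $B_\epsilon$. The union $\bigcup_{\omega\in S^{d-1}}B((1-\delta/2)\omega,\delta/4)$ equals the annulus $\{1-3\delta/4<|x|<1-\delta/4\}$, and then the union over $\delta\in(0,\delta_0]$ yields $u\equiv 0$ on $A_-(3\delta_0/4)$, completing the outer-to-inner direction.

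For the reverse direction, I will set $\tilde u:=\mathcal T_k u$. Lemma \ref{l:kelvin} gives $|\Delta^k\tilde u|\le|\tilde V\tilde u|$ with $\tilde V(y)=|y|^{-4k}V(\Phi(y))$, and a change of variables $y=\Phi(x)$ shows $\tilde V\in L^{d/2k}_{\mathrm{loc}}(\Phi(\Omega))$. Since $\Phi$ is an involution mapping $A_-(\delta_2)$ onto $A_+(\delta_2/(1-\delta_2))$, vanishing of $u$ on $A_-(\delta_2)$ becomes vanishing of $\tilde u$ on an outer annulus; applying the outer-to-inner conclusion to $\tilde u$ and inverting $\mathcal T_k$ produces $u\equiv 0$ on some $A_+(\delta_1')$.

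The main obstacle is that the curvature of $S^{d-1}$ prevents a linear weight $e^{v\cdot x}$ from cleanly separating the two sides of the sphere. The chosen geometry sidesteps this by aligning the weight direction $\omega$ with the center $(1-\delta/2)\omega$ of the target ball $B_\epsilon$ and exploiting $\omega\cdot x\le |x|$ on $T_\delta$ to secure a strict weight gap of order $e^{R\delta/4}$ between the (unknown) inner transition region and the target; the smallness required of $\delta_0$ for the Hölder absorption of $V$ is independent of this gap and is the only other quantitative input.
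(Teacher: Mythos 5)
Your overall strategy matches the paper's: cut $u$ off near the sphere, apply the Carleman inequality from Theorem \ref{t:carl} to the compactly supported truncation, peel off and absorb the $Vu$ term by Hölder and the smallness of the local $L^{d/2k}$ norm, let the weight parameter blow up, and reverse the inclusion via the point-inversion $\mathcal T_k$ of Lemma \ref{l:kelvin}. The geometric implementation differs: the paper localizes with a cutoff $\phi(\cdot-e_1)$ supported in a small ball around $e_1$ and reads off the conclusion on the slab $S_\rho=\{1-\rho<x_1\le1\}$, whereas you use a purely radial annular cutoff $\chi=\psi_\delta(|\cdot|)$ and read off the conclusion on the small ball $B((1-\delta/2)\omega,\delta/4)$, then sweep over all $\omega\in S^{d-1}$ and all $\delta\le\delta_0$. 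Both exploit the Cauchy--Schwarz inequality $\omega\cdot x\le|x|$ to create the exponential gap. Your version avoids the auxiliary inclusion $B(0,1)\cap S_\rho\subset B(e_1,\delta_1/2)$ that the paper needs, but at the cost of spreading the $V$-absorption over a thin full annulus rather than a small set near $e_1$.

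There is one genuine error in the absorption step. You require $C\|V\|_{L^{d/2k}(\supp\chi)}\le\frac12$ and claim this is attainable for small $\delta$ by absolute continuity of the integral. But $\supp\chi=\{1-2\delta\le|x|\le1+\delta_1/2\}$ does \emph{not} shrink as $\delta\to0$: it always contains the fixed annulus $\{1\le|x|\le1+\delta_1/2\}$, so $\|V\|_{L^{d/2k}(\supp\chi)}$ stays bounded away from zero for generic $V$. The argument is salvageable because the Hölder step only sees $V$ on $\supp(\chi u)$, and since $u$ vanishes on $A_+(\delta_1)$ this set is contained in $\{1-2\delta\le|x|\le1\}$, whose Lebesgue measure \emph{does} tend to zero with $\delta$; absolute continuity then gives the required smallness of $\|V\|_{L^{d/2k}(\{1-2\delta\le|x|\le1\})}$. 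You already invoke the vanishing of $u$ on the outer transition region for the commutator, so the same observation should be made one step earlier for the $\chi Vu$ term. With that correction the outer-to-inner implication is complete, and the reverse direction via $\mathcal T_k$ is as in the paper.

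One further small remark: you should also note that once $p$ is fixed with $u\in W^{2k,p}_{\mathrm{loc}}$ and $(1/p,1/q)$ in the admissible range, the finiteness of $\|e^{R\omega\cdot x}\chi u\|_{L^q}$ (needed to perform the absorption) follows from $\chi u\in W^{2k,p}_0(\R^d)$, the Sobolev embedding, and the compact support; the paper makes this explicit and it is worth doing so here as well.
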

\begin{proof}
First, we assume $u=0$ in $A_+(\delta_1)$ and show that $u$ vanishes in $A_-(\delta_2)$ for some $0<\delta_2<1$. By rotational symmetry, it is enough to prove that $u=0$ in a neighborhood of $e_1:=(1,0,\cdots,0)\in \R^d$.

Let $p>1$ if $k\ge\frac{d-2}2$ and  $p= \frac{2(d-1)}{d+2k}$ if $k<\frac{d-2}2$, and let $q\in (1,\infty)$ be such that \eqref{e:cond} holds. We denote by $B(x,r)$ the open ball in $\R^d$ of radius $r$ centered at $x$. We choose a cutoff function $\phi \in C^\infty_0(B(0,\delta_1))$ satisfying $\phi=1$ in $B(0,\frac{\delta_1}2),$ and define $\tilde u(x)=u(x)\phi(x-e_1)$. Since $V\in L^{d/2k}_{\mathrm{loc}}(\Omega)$ and $B(0,1)$ is convex, there is a small $\rho>0$ such that 
\begin{equation}\label{eq_V}
 C\|V\|_{L^{\frac{d}{2k}} ( S_\rho \cap B(e_1, \frac{\delta_1}2))} \le \tfrac12 \quad \text{and}\quad  B(0,1) \cap S_\rho \subset B(e_1, \tfrac{\delta_1}2),
 \end{equation}
where $S_\rho:=\{x\in\R^d\colon 1-\rho< x_1\le 1\}$ and $C$ is the constant in the Carleman inequality \eqref{carl}. See Figure \ref{fig_ucp}.

Obviously, 
\[	\supp \tilde u \subset B(e_1,\delta_1)\setminus A_+(\delta_1)\subset \big[ S_\rho \cap \big(B(e_1, \delta_1)\setminus A_+(\delta_1)\big) \big] \cup \{x_1\le 1-\rho\}.	\] 
So, by the inclusion in \eqref{eq_V} we have 
\[	S_\rho \cap \big(B(e_1, \delta_1)\setminus A_+(\delta_1)\big)\subset S_\rho \cap B(0,1) \subset S_\rho \cap B(e_1, \tfrac{\delta_1}2).\] 
Therefore, the Carleman inequality \eqref{carl} gives 
\begin{equation}\label{e:car1}
\|e^{\lambda x_1} \tilde u\|_{L^q(S_\rho)} 
\le C \|e^{\lambda x_1} \Delta^k \tilde u\|_{L^p(S_\rho \cap B(e_1,\frac{\delta_1}2))} + C \|e^{\lambda x_1} \Delta^k \tilde u\|_{L^p(\{x_1\le 1-\rho\})}
\end{equation}
for all $\lambda>0$. Since $\tilde u =u$ in $B(e_1, \frac{\delta_1}2)$, by \eqref{e:diff_ineq}, H\"older's inequality, and \eqref{eq_V} 
\begin{equation}\label{e:car2}
\begin{aligned}
C \|e^{\lambda x_1} \Delta^k \tilde u\|_{L^p(S_\rho \cap B(e_1,\frac{\delta_1}2))}  
&\le C\|e^{\lambda x_1} V u\|_{L^p(S_\rho \cap B(e_1,\frac{\delta_1}2) )} \\
&\le C\|V\|_{L^{\frac{d}{2k}} ( S_\rho \cap B(e_1, \frac{\delta_1}2))} \|e^{\lambda x_1} u\|_{L^q(S_\rho \cap B(e_1,\frac{\delta_1}2))} \\
&\le \tfrac12 \|e^{\lambda x_1} \tilde u\|_{L^q(S_\rho)} .
\end{aligned}
\end{equation}
Since $\tilde u \in W^{2k,p}(\R^d)$, it is clear that $\|e^{\lambda x_1} \tilde u\|_{L^q(S_\rho)}<\infty$ by the Sobolev embedding. Therefore, combining the estimates \eqref{e:car1} and \eqref{e:car2}, we have
\[	\|e^{\lambda x_1} \tilde u\|_{L^q(S_\rho)} \le 2C\|e^{\lambda x_1} \Delta^k \tilde u\|_{L^p(\{x_1< 1-\rho\})} \le 2Ce^{\lambda (1-\rho)} \|\Delta^k \tilde u\|_{L^p(\{x_1\le1-\rho \})},	\]
equivalently,
\[	\|e^{\lambda (x_1-1+ \rho )} u\|_{L^q(S_\rho\cap B(e_1,\frac{\delta_1}2))} \le 2C \|\Delta^k \tilde u\|_{L^p(\{x_1\le1-\rho\})}	\]
uniformly in $\lambda$. The right hand side is clearly finite, so the inequality leads to a contradiction as $\lambda\to \infty$ unless $u$ were identically zero in $S_\rho$.
 
We next prove the converse  assertion, that is, we show $u=0$ in a neighborhood of $e_1$ assuming $u=0$ in $A_-(\delta_2)$. 

Making use of the Kelvin transform \eqref{e:kel} we notice that $\CT_ku =0$ in $A_+(\delta_1)$ with $\delta_1=\frac{\delta_2}{1-\delta_2}$. If we set  $V^*(x)=|x|^{-4k}V\circ \Phi(x)$, then for every $K\subset\subset \Phi(\Omega)$
\[	\|V^\ast\|_{L^\frac{d}{2k}(K)} = \|V\|_{L^\frac{d}{2k}(\Phi^{-1}(K))} <\infty,	\]
so $V^*\in L^{d/2k}_{\mathrm{loc}}(\Omega)$. It follows from \eqref{eq_kelvin} and \eqref{e:diff_ineq} that 
 \[ |\Delta^k \CT_ku (x)|=|x|^{-d-2k}| (\Delta^k u)\circ \Phi(x)| \le |V^*(x)\CT_k u(x)|.\]
Now, we may use the preceding argument proving continuation of $u=0$ from the outside of $S^{d-1}$ to the inside. Thus we see $\CT_k u$ vanishes near $e_1$. Consequently, $u$ also vanishes in a neighborhood of $e_1.$
\end{proof}

By the argument in \cite[p. 344]{KRS87} Theorem \ref{t:wucp} is rather a straightforward consequence of the above lemma. 
\begin{proof}[Proof of Theorem \ref{t:wucp}] 
Let $u\in\mathbf{X}$ vanish in a nonempty open subset of $\Omega$, and we denote by $\Omega_0$ the maximal open set in which $u=0$. We aim to prove $\Omega_0=\Omega$. 

Suppose to the contrary that $\Omega_0\neq\Omega$. Then there exists an $x_0\in \Omega\cap \partial \Omega_0$ and an open ball $B\subset \Omega_0$ such that $\partial B\cap \partial \Omega_0 =\{x_0\}$. By translation and scaling, we may assume $B=B(0,1)$. Since $u=0$ in $B$, Lemma \ref{lem_ucp} shows that $u=0$ in a neighborhood of $x_0$. This contradicts to the maximality of $\Omega_0$. Therefore, $\Omega_0=\Omega$.
\end{proof}

\section{Proof of Proposition \ref{p:example2}} \label{s:pf_lower}
In this section, we prove \eqref{eq_goal} by constructing examples. In the first place, it would be convenient to define notations for quantities relevant to the example and obtain estimates for those quantities.  

For $\frac12 \le \tau \le 2$, $y\in \R^{d-1}$, $0<\ep \le 1$, and a nonnegative $\varphi\in C_0^\infty(\R)$ supported near $1$,  let us define 
\begin{equation}\label{eq_Ij}
\begin{aligned}
\CI_1(\tau,y;\ep) &:= \int_{\R} \frac{2\ep\tau}{(\rho^2 -1 +\ep^2\tau^2)^2 +4\ep^2\tau^2}  \varphi(\rho) \cos((\rho-1)|y|)\,d\rho,\\
\CI_2(\tau,y;\ep) &:= \int_{\R} \frac{\rho^2 -1 +\ep^2\tau^2}{(\rho^2 -1 +\ep^2\tau^2)^2 +4\ep^2\tau^2}  \varphi(\rho) \cos((\rho-1)|y|)\,d\rho,\\
\CI_3(\tau,y;\ep) &:= \int_{\R} \frac{2\ep\tau}{(\rho^2 -1 +\ep^2\tau^2)^2 +4\ep^2\tau^2}  \varphi(\rho) \sin((\rho-1)|y|)\,d\rho,\\
\CI_4(\tau,y;\ep) &:= \int_{\R} \frac{\rho^2 -1 +\ep^2\tau^2}{(\rho^2 -1 +\ep^2\tau^2)^2 +4\ep^2\tau^2}  \varphi(\rho) \sin((\rho-1)|y|)\,d\rho.
\end{aligned}
\end{equation}
We also set
\begin{align*}
\wt\CI_1(\tau;\ep) &:= \int_{\R} \frac{2\ep\tau}{(\rho^2 -1 +\ep^2\tau^2)^2 +4\ep^2\tau^2}  \varphi(\rho) \,d\rho,\\
\wt\CI_2(\tau;\ep) &:= \int_{\R} \frac{\rho^2 -1 +\ep^2\tau^2}{(\rho^2 -1 +\ep^2\tau^2)^2 +4\ep^2\tau^2}  \varphi(\rho) \,d\rho.
\end{align*}
By a simple calculation we get the following.
\begin{lemma}\label{lem_Ij} 
 For a fixed $\delta_\circ \in (0,\frac14)$ let $\varphi\in C^\infty_0([1-2\delta_\circ, 1+2\delta_\circ])$. Then there is a constant $C$ independent of $0<\ep\ll 1$ such that
\[	 |\wt \CI_1(\tau;\ep)|  \le C \ \ \text{and} \ \  |\wt \CI_2(\tau;\ep)| \le C \log \ep^{-1}.	\]
Consequently, 
\begin{align*}
|\CI_1(\tau, y;\ep)| , \, |\CI_3(\tau, y;\ep)| &\le C, \\
|\CI_2(\tau,y;\ep)|, \, |\CI_4(\tau,y;\ep)|  &\le C \log\ep^{-1}.
\end{align*}
\end{lemma}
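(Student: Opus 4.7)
My plan is first to factor the denominator as
\[
(\rho^2 - 1 + \ep^2\tau^2)^2 + 4\ep^2\tau^2 = \bigl((\rho-1)^2 + \ep^2\tau^2\bigr)\bigl((\rho+1)^2 + \ep^2\tau^2\bigr),
\]
which one verifies by expanding both sides (or by observing that the left side equals $|\rho^2 - (1-i\ep\tau)^2|^2$). On $\supp\varphi \subset [1-2\delta_\circ, 1+2\delta_\circ]$ with $\delta_\circ < \tfrac14$ one has $(\rho+1)^2 + \ep^2\tau^2 \ge 9/4$, so the full denominator is comparable to $(\rho-1)^2 + \ep^2\tau^2$, with constants uniform in $\tau \in [1/2, 2]$ and $0 < \ep \ll 1$.

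With this factorization in hand, the plan for $\wt\CI_1$ is the substitution $u = (\rho-1)/(\ep\tau)$. A direct computation shows that the integrand becomes a bounded multiple of $\varphi(1+\ep\tau u)/(1+u^2)$, so
\[
|\wt\CI_1(\tau;\ep)| \lesssim \|\varphi\|_\infty \int_\R \frac{du}{1+u^2} \le \pi\|\varphi\|_\infty,
\]
giving the desired $\ep$-independent bound. Intuitively, the integrand is a dilated Poisson kernel in $\rho$ centered at $1$, truncated by $\varphi$, and Poisson kernels have uniformly bounded $L^1$ mass.

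For $\wt\CI_2$ I would combine the pointwise inequality $|X|/(X^2+Y^2) \le (X^2+Y^2)^{-1/2}$, applied with $X = \rho^2-1+\ep^2\tau^2$ and $Y = 2\ep\tau$, with the factorization above. This dominates the integrand by $C\,\varphi(\rho)\bigl((\rho-1)^2 + \ep^2\tau^2\bigr)^{-1/2}$. The classical estimate
\[
\int_0^{2\delta_\circ}\frac{du}{\sqrt{u^2 + \ep^2\tau^2}} = \log\bigl(2\delta_\circ + \sqrt{4\delta_\circ^2 + \ep^2\tau^2}\bigr) - \log(\ep\tau) \lesssim \log\ep^{-1},
\]
valid uniformly for $\tau \in [1/2, 2]$, then yields $|\wt\CI_2(\tau;\ep)| \le C\log\ep^{-1}$.

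The bounds on $\CI_1,\ldots,\CI_4$ are then immediate: since $\varphi \ge 0$ and $|\cos|,|\sin| \le 1$, pointwise domination gives $|\CI_1|, |\CI_3| \le \wt\CI_1$, while $|\CI_2|$ and $|\CI_4|$ admit the same $C\varphi(\rho)\bigl((\rho-1)^2 + \ep^2\tau^2\bigr)^{-1/2}$ majorant used in the $\wt\CI_2$ estimate. I do not expect a genuine obstacle in this lemma; the only care needed is to make the comparability constants in the factorization uniform in $\tau \in [1/2,2]$, which is automatic from the explicit lower bound $(\rho+1)^2 + \ep^2\tau^2 \ge 9/4$.
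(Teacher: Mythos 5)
Your proof is correct, and it is essentially the paper's argument: an elementary change of variables combined with the uniform comparability of the denominator to $(\rho-1)^2+\ep^2\tau^2$ on $\supp\varphi$. The paper substitutes $t=\rho^2-1+\ep^2\tau^2$ directly (using that the map $\rho\mapsto\rho^2-1+\ep^2\tau^2$ is a nice diffeomorphism near $\rho=1$) rather than first invoking the exact factorization $(\rho^2-1+\ep^2\tau^2)^2+4\ep^2\tau^2=((\rho-1)^2+\ep^2\tau^2)((\rho+1)^2+\ep^2\tau^2)$, but this is only a cosmetic difference; both routes reduce to $\int\frac{du}{1+u^2}$ and $\int_0^{O(1/\ep)}\frac{u\,du}{1+u^2}$ respectively, and the passage from $\wt\CI_j$ to $\CI_j$ by dominating $|\cos|,|\sin|\le1$ is the same.
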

\begin{proof}
Since the function $\rho\mapsto \rho^2 -1+\ep^2\tau^2$ is invertible on $[1-2\delta_\circ, 1+2\delta_\circ]$, by changing variables, it is easy to see 
\[	|\wt\CI_1(\tau;\ep)|
	\le \|\varphi\|_{L^\infty} \int_{-6}^6 \frac{2\ep\tau}{t^2 +4\ep^2\tau^2} dt 
	\lesssim  \int_0^\infty \frac1{t^2+1} dt
	\lesssim 1.	\]
Similarly, we get
\begin{align*}
|\wt\CI_2(\tau;\ep)|
\le \|\varphi\|_{L^\infty} \int_{-6}^{6}  \frac{|t|}{t^2 +4\ep^2\tau^2} dt 
\lesssim \int_{0}^{1/\ep}  \frac{t}{t^2 +1} dt
\lesssim  \log\ep^{-1}.
\end{align*}
The estimates for $\CI_j(\tau, y;\ep)$ follow in a similar manner. 
\end{proof}

Fortunately, under additional assumptions on $\varphi$ we can improve the estimates for $\CI_2$ and $\CI_4$.  We also obtain a proper lower bound for $\CI_1$.

\begin{lemma} \label{lem_add}
Let $\delta_\circ$, $\ep$, and $\varphi$ be given as in Lemma \ref{lem_Ij}, and suppose further that $0\le \varphi(\rho)\le1$ and $\varphi(1+\rho)=\varphi(1-\rho)$ for $\rho \in\R$, and   
\begin{equation}\label{e:varphi1} 
\varphi(\rho) =1\ \ \text{if} \ \ \rho\in [1-\delc,1+\delc].
\end{equation}
Then, we have the uniform estimates 
\begin{align}
\label{e:I2upper}
|\CI_2(\tau,y;\ep)| &\lesssim 1 \ \ \text{for all} \ \ \tau\in [1/2,2] \ \ \text{and} \ \ y\in \R^{d-1}, \\
\label{e:I4upper}
|\CI_4(\tau,y;\ep)| &\lesssim 1 \ \ \text{for all} \ \ \tau\in [1/2,2] \ \ \text{and} \ \ |y|\sim \ep^{-1}.
\end{align}
Furthermore, 
\begin{equation}\label{lower_I1}
 \CI_1(\tau,y;\ep)\gtrsim 1 \ \ \text{for all} \ \ \tau\in [1/2,2] \ \ \text{and} \ \ |y|\sim \ep^{-1}.
\end{equation}
\end{lemma}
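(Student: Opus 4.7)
\emph{Plan.} My strategy is to combine the pairs $(\CI_1, \CI_2)$ and $(\CI_3, \CI_4)$ into complex-valued integrals that factor cleanly, and then use the symmetry $\varphi(1+t)=\varphi(1-t)$ to extract a crucial cancellation at the pole near $\rho=1$. From the identity
\[(\rho-1-i\ep\tau)(\rho+1+i\ep\tau) = \rho^2 - 1 + \ep^2\tau^2 - 2i\ep\tau,\]
one writes
\[\CI_2 + i\CI_1 = \int_\R \frac{\varphi(\rho)\cos((\rho-1)|y|)}{(\rho-1-i\ep\tau)(\rho+1+i\ep\tau)}\,d\rho,\]
and similarly for $\CI_4 + i\CI_3$ with $\sin$ in place of $\cos$. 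Partial fractions yield a prefactor $\tfrac{1}{2(1+i\ep\tau)}$ times the difference of the integrals of $\tfrac{1}{\rho-1-i\ep\tau}$ and $\tfrac{1}{\rho+1+i\ep\tau}$. The latter has smooth, bounded amplitude on $\supp\varphi$ (which lies away from $\rho=-1$) and contributes $O(1)$ uniformly in $y$; repeated integration by parts in $\rho$ moreover shows this contribution is $O(\ep^N)$ for any $N$ when $|y|\sim\ep^{-1}$, which will matter for the lower bound.

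In the remaining integral I substitute $t = \rho - 1$ and decompose $\tfrac{1}{t-i\ep\tau} = \tfrac{t}{t^2+\ep^2\tau^2} + \tfrac{i\ep\tau}{t^2+\ep^2\tau^2}$. Since $\varphi(1+t)\cos(t|y|)$ is even in $t$, only the $i\ep\tau$ piece survives, giving
\[\int \frac{\varphi(1+t)\cos(t|y|)}{t-i\ep\tau}\,dt = i\ep\tau \int \frac{\varphi(1+t)\cos(t|y|)}{t^2+\ep^2\tau^2}\,dt,\]
whose modulus is at most $\pi\|\varphi\|_\infty$ using $\int_\R dt/(t^2+\ep^2\tau^2) = \pi/(\ep\tau)$. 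Taking real parts yields \eqref{e:I2upper}. For $\CI_4 + i\CI_3$, the oddness of $\sin(t|y|)$ instead kills the even piece and leaves $\int t\,\varphi(1+t)\sin(t|y|)/(t^2+\ep^2\tau^2)\,dt$. I would further decompose $\tfrac{t}{t^2+\ep^2\tau^2} = \tfrac{1}{t} - \tfrac{\ep^2\tau^2}{t(t^2+\ep^2\tau^2)}$: the first piece is a sine integral bounded uniformly in $y$ via the classical bound on $\int_0^M (\sin u)/u\,du$, and the second I would bound by splitting the $t$-integration at $t=\ep\tau$, using $|\sin(t|y|)|\le t|y|$ together with the hypothesis $\ep|y|\sim 1$ on $[0,\ep\tau]$ and $\tfrac{\ep^2\tau^2}{t(t^2+\ep^2\tau^2)} \le \ep^2\tau^2/t^3$ on $[\ep\tau,\infty)$. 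This yields \eqref{e:I4upper}.

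For the lower bound \eqref{lower_I1} I take imaginary parts. Setting $A := \int \varphi(1+t)\cos(t|y|)/(t^2+\ep^2\tau^2)\,dt$, the analysis above gives
\[\CI_1 = \frac{\ep\tau A}{2(1+\ep^2\tau^2)} + O(\ep^N).\]
Using the hypothesis $\varphi=1$ on $[1-\delc, 1+\delc]$ together with the Fourier identity $\int_\R \cos(t|y|)/(t^2+\ep^2\tau^2)\,dt = (\pi/\ep\tau)\,e^{-\ep\tau|y|}$, a simple tail estimate gives $A = (\pi/\ep\tau)\,e^{-\ep\tau|y|} + O(1)$, and hence
\[\CI_1 = \frac{\pi e^{-\ep\tau|y|}}{2(1+\ep^2\tau^2)} + O(\ep).\]
For $\tau\in[1/2,2]$ and $|y|\sim \ep^{-1}$, the quantity $\ep\tau|y|$ lies in a compact subset of $(0,\infty)$, so $e^{-\ep\tau|y|}\gtrsim 1$, and for small $\ep$ the $O(\ep)$ error is dominated. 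The main obstacle will be tracking the several $O(1)$-type errors (from the $(\rho+1)$-pole, from the tails of $A$, from expanding the prefactor $\tfrac{1}{2(1+i\ep\tau)}$) so that each carries an additional factor of $\ep$; this is precisely where the stronger flatness $\varphi=1$ on $[1-\delc,1+\delc]$ (and not merely evenness) is used.
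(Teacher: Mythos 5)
Your proof is correct and takes a genuinely different route from the paper's. Both exploit the same basic mechanism—the symmetry $\varphi(1+\rho)=\varphi(1-\rho)$ kills the non-integrable part of the singularity at $\rho=1$—but you package it far more algebraically. The paper proceeds term by term: for \eqref{e:I2upper} it replaces the kernel by the odd approximation $\frac{\rho-1}{2(\rho-1)^2+2\ep^2\tau^2}$ (whose integral vanishes by parity) and then bounds the difference by a three-way case analysis; for \eqref{lower_I1} it splits the integral at $|\rho^2-1+\ep^2\tau^2|=\lambda\ep$, fixes $\lambda$ large and then $\mu$ small, and uses $\cos((\rho-1)|y|)\approx 1$ on the near region only for $|y|\in[\mu/(4\ep),\mu/(2\ep)]$. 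Your approach instead factors $\rho^2-1+\ep^2\tau^2-2i\ep\tau=(\rho-1-i\ep\tau)(\rho+1+i\ep\tau)$, identifies $\CI_2+i\CI_1$ and $\CI_4+i\CI_3$ as integrals against this kernel, partial-fractions off the harmless $(\rho+1)$-pole (which is $O(\ep^N)$ by non-stationary phase when $|y|\sim\ep^{-1}$), and lets parity select exactly the Poisson kernel $\frac{\ep\tau}{t^2+\ep^2\tau^2}$ or its conjugate piece $\frac{t}{t^2+\ep^2\tau^2}$. This yields the explicit asymptotic $\CI_1=\frac{\pi e^{-\ep\tau|y|}}{2(1+\ep^2\tau^2)}+O(\ep/\delc)+O(\ep^N)$ from the Fourier transform of the Poisson kernel, which is stronger than the paper's lower bound and works on any fixed annulus $c_1\le\ep|y|\le c_2$, not just the specially tuned one.

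I verified each algebraic step: the factorization, the partial fractions with prefactor $\frac{1}{2(1+i\ep\tau)}$, the parity cancellations, and the tail estimate $A=\tfrac{\pi}{\ep\tau}e^{-\ep\tau|y|}+O(1/\delc)$; the dyadic split at $t=\ep\tau$ for the $\CI_4$ bound and the sine-integral bound both check out. The one point that must be made precise in a full writeup is that the $(\rho+1)$-pole contribution, a priori only $O(1)$, needs at least one integration by parts in $\rho$ to pick up a factor of $|y|^{-1}\lesssim\ep$ before it can be absorbed as an error in the lower bound for $\CI_1$; you flag this correctly, so the argument is complete in outline.
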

\begin{proof}
In order to use the symmetry of the function $\varphi$ we approximate $\rho^2 -1 +\ep^2\tau^2$ by $2(\rho-1)$ and write
\begin{align*}
&\frac{\rho^2 -1 +\ep^2\tau^2}{(\rho^2 -1 +\ep^2\tau^2)^2 +4\ep^2\tau^2}   \\
&=\frac{\rho-1}{2(\rho-1)^2 +2\ep^2\tau^2} +\Big( \frac{\rho^2 -1 +\ep^2\tau^2}{(\rho^2 -1 +\ep^2\tau^2)^2 +4\ep^2\tau^2} - \frac{\rho-1}{2(\rho-1)^2 + 2\ep^2\tau^2}\Big)\\
&=\frac{\rho-1}{2(\rho-1)^2 +2\ep^2\tau^2} - \frac{ (\rho-1)^4 (\rho+1) + 2\ep^2\tau^2 (\rho-1)^3 +\ep^4\tau^4 (\rho-3)}{2\big((\rho-1)^2 +\ep^2\tau^2\big) \big((\rho^2 -1 +\ep^2\tau^2)^2 +4\ep^2\tau^2 \big)}\\
&=: I_1(\rho,\tau;\ep) - I_2(\rho,\tau;\ep).
\end{align*}
Since the function $\rho\mapsto \rho\varphi(1+\rho)$ is  odd, we see that
\[	\int I_1(\rho,\tau;\ep) \varphi(\rho)\cos((\rho-1)|y|)\,d\rho 
= \int_{-2\delc}^{2\delc} \frac{\rho\varphi(1+\rho)}{2(\rho^2+\ep^2\tau^2)} \cos(\rho|y|) d\rho =0,	\]
and 
\[	\CI_2(\tau,y;\ep) = -\int I_2(\rho,\tau;\ep) \varphi(\rho) \cos((\rho-1)|y|)\,d\rho.	\] 
To estimate this we separately consider the three terms in the numerator in $I_2$. 

First, by translation $\rho\to\rho+1$
\begin{align*}
&\Big|\int \frac{ (\rho-1)^4 (\rho+1) \varphi(\rho) \cos((\rho-1)|y|) }{2\big((\rho-1)^2 +\ep^2\tau^2\big) \big((\rho^2 -1 +\ep^2\tau^2)^2 +4\ep^2\tau^2\big)}  \,d\rho \Big|\\
&\lesssim  \int_{-2\delc}^{2\delc} \frac{ \rho^2 (\rho+2) }{(\rho(\rho+2) +\ep^2\tau^2)^2 +4\ep^2\tau^2}\,d\rho .
\end{align*}
If $|\rho|\ge \ep^2\tau^2$ then $(\rho(\rho+2) +\ep^2\tau^2)^2=\rho^2((\rho+2) +\frac{\ep^2\tau^2}\rho)^2 \ge\rho^2(\rho+1 )^2$, hence 
\begin{align*}
& \int_{-2\delc}^{2\delc} \frac{ \rho^2 (\rho+2) }{(\rho(\rho+2) +\ep^2\tau^2)^2 +4\ep^2\tau^2}\,d\rho \\
& \le \int_{\ep^2\tau^2\le |\rho|\le 2\delc} \frac{ \rho+2 }{(\rho+1)^2}\,d\rho + \int_{|\rho|\le \ep^2\tau^2} \frac{ \rho^2 (\rho+2) }{4\ep^2\tau^2}\,d\rho \lesssim 1.
\end{align*}
Secondly, similar computations show that  
\begin{align*}
&\Big|\int \frac{ \ep^2\tau^2 (\rho-1)^3 \varphi(\rho) \cos((\rho-1)|y|) }{\big((\rho-1)^2 +\ep^2\tau^2\big) \big((\rho^2 -1 +\ep^2\tau^2)^2 +4\ep^2\tau^2\big)}  \,d\rho \Big| \\
&\lesssim \ep^2 \int_{-2\delc}^{2\delc} \frac{ |\rho|}{ (\rho(\rho+2) +\ep^2\tau^2)^2 +4\ep^2\tau^2 } \,d\rho\\
&\lesssim \ep^2  \Big( \int_{\ep^2\tau^2 \le |\rho|\le 2\delc} \frac{|\rho|}{\rho^2 (\rho+1)^2} d\rho + \int_{|\rho|\le \ep^2\tau^2} \frac{|\rho|}{4\ep^2\tau^2} d\rho \Big) \\
&\lesssim \ep^2 ( \log \ep^{-2} + \ep^2 ) \lesssim 1.
\end{align*}
Finally, we see that
\begin{align*}
&\Big|\int \frac{ \ep^4\tau^4 (\rho-3) \varphi(\rho) \cos((\rho-1)|y|)}{2\big( (\rho-1)^2 +\ep^2\tau^2 \big) \big((\rho^2 -1 +\ep^2\tau^2)^2 +4\ep^2\tau^2 \big)}  \,d\rho \Big| \\
& \lesssim \ep^4 \int_{-2\delc}^{2\delc} \frac{2-\rho}{(\rho^2 +\ep^2\tau^2) \big((\rho(\rho+2) +\ep^2\tau^2)^2 +4\ep^2\tau^2\big)} \,d\rho  \lesssim 1.
\end{align*}
Combining these estimates, we obtain the uniform estimate \eqref{e:I2upper}.

Let us prove the estimate \eqref{e:I4upper}. As before we can write
\[	\CI_4(\tau,y;\ep) = \int \big(I_1(\rho,\tau;\ep)-I_2(\rho,\tau;\ep)\big) \varphi(\rho) \sin((\rho-1)|y|)\,d\rho.	\] 
Using the argument identical to one in the above we have 
\[ \Big| \int I_2(\rho,\tau;\ep) \varphi(\rho) \sin((\rho-1)|y|) d\rho \Big| \lesssim 1\]
uniformly in $\tau\in [1/2,2]$ and $y\in \R^d$.  Thus it remains to prove
\begin{equation}\label{I4}
\Big| \int I_1 (\rho,\tau;\ep) \varphi(\rho) \sin ((\rho-1)|y|) \, d\rho \Big| \lesssim 1
\end{equation}
uniformly in $\tau\in [1/2,2]$ and $|y|\sim \ep^{-1}$. By the assumption on $\varphi$ the integral in \eqref{I4} is equal to
\[
\int_{-2\delc}^{2\delc} \frac{\rho \varphi(\rho+1) \sin(\rho|y|)}{2(\rho^2 +\ep^2\tau^2)}  \,d\rho
= \int_{0}^{2\delc} \frac{\rho \varphi(\rho+1) \sin(\rho|y|)}{\rho^2 +\ep^2\tau^2} \,d\rho.
\]
We  break this into the following two terms:
\begin{align*}
\CI_4^1(y)&:=\int_{0}^{2\delc} \frac{\sin(\rho|y|) }{\rho}  \,d\rho, \\
\CI_4^2 (\tau,y;\ep) &:=\int_{0}^{2\delc} \Big(\frac{\rho\varphi(\rho+1)}{\rho^2 +\ep^2\tau^2}  -\frac 1\rho\Big)\sin(\rho|y|) \,d\rho. 
\end{align*}
Since $|\int_0^u \frac{\sin t}t dt|\le 4$ for $u>0$ we see  $|\CI_4^1 (y) |\le 4$ for any $y.$  From \eqref{e:varphi1} it follows that
\begin{equation}\label{I42}
 \CI_4^2 (\tau,y;\ep) = -\int_{0}^{\delc} \frac{\ep^2\tau^2 \sin(\rho|y|)}{\rho(\rho^2 +\ep^2\tau^2)} \,d\rho +\int_{\delc}^{2\delc} \Big(\frac{\rho \varphi(\rho+1)}{\rho^2 +\ep^2\tau^2}  -\frac 1\rho\Big)\sin(\rho|y|) \,d\rho.
 \end{equation}
The (absolute value of the) first integral in \eqref{I42} is dominated by
\begin{align*}
\ep^2\tau^2|y|^2 \Big| \int_0^{\delc |y|} \frac {\sin \rho }{\rho(\rho^2 +\ep^2\tau^2 |y|^2)} d\rho \Big| 
\le \ep\tau |y| \int_0^{\frac{\delc}{\ep\tau}} \frac 1{1+\rho^2} d\rho,
\end{align*}
which is uniformly bounded provided that $|y|\sim \ep^{-1}$. On the other hand, the second integral in \eqref{I42} is estimated by $\int_{\delc}^{2\delc} \frac{d\rho}{\rho} \lesssim 1.$ Thus we obtain \eqref{I4}, which yields the uniform bound \eqref{e:I4upper}.

Finally, we prove \eqref{lower_I1}. Let us set 
\[	b(u)= \int_{|s|\le u} \frac{1}{s^2+1}ds, \  \  0\le u\le \infty. 	\]
Clearly, this is a continuous, monotonically increasing, and bounded function on the interval $[0,\infty]$. Let us fix a large $\lambda>0$ such that 
\begin{equation}\label{e:lamb}
b(\lambda/4) \ge 2^4 \big( b(\infty)-b(\lambda/4) \big),
\end{equation}
and then let us take a small number $\mu>0$  such that $\lambda\mu\le 2^{-7}$ and define
\[	\mathbb A=\Big \{y\in \R^{d-1}\colon \frac\mu{4\ep} \le |y| \le \frac\mu{2\ep} \Big\}.	\]
We now break the integral $\mathcal I_1(\tau,y;\ep)$ into two parts as 
\begin{align*}
\mathcal I_1(\tau,y;\ep)
	&= \int_{|\rho^2-1+\ep^2\tau^2|\le\lambda\ep} \frac{2\ep \tau \varphi(\rho) \cos ((\rho-1)|y|)}{(\rho^2-1+\ep^2\tau^2)^2+(2\ep\tau)^2} d\rho \\
	&\quad + \int_{|\rho^2-1+\ep^2\tau^2|\ge\lambda\ep} \frac{2\ep \tau \varphi(\rho) \cos ((\rho-1)|y|)}{(\rho^2-1+\ep^2\tau^2)^2+(2\ep\tau)^2} d\rho  \\
	&=: \mathcal I_{1}^1 (\tau,y;\ep) +\mathcal I_{1}^2 (\tau,y;\ep).
\end{align*}
Let us first estimate a lower bound for $\mathcal I_{1}^1(\tau,y;\ep)$.  Since $\delta_\circ$ and $\ep$ are small, if $|\rho^2-1+\ep^2\tau^2| \le \lambda \ep $, then it is easy to see that\footnote{In fact, it is vacuously true that $\ep\le \frac\lambda 4$ since we are assuming $\ep\ll1$ and $\lambda$ is a large number. It follows, since $\rho\in \supp\varphi \subset [1-2\delc, 1+2\delc]$ and $\delc<\frac14$, that $|\rho-1|\le \frac{\ep(\lambda+4\ep)}{\rho+1}\le \frac{2\ep\lambda}{2(1-\delc)} \le \frac43 \ep\lambda.$}  $|\rho-1|\le 2\lambda\ep$.  Hence whenever $y\in \mathbb A$  
\[	|(\rho-1)y|\le \lambda\mu \le 2^{-7}, \] 
and this yields 
\[	\cos ((\rho-1)|y| ) \ge 1- \frac{(\rho-1)^2|y|^2}2\ge 1-2^{-15}.	\]
If $\ep\le \frac{\delc}{2\lambda}$ then $|\rho-1|\le 2\lambda\ep \le \delc$, so $\varphi(\rho)=1$ by the assumption \eqref{e:varphi1}. Thus, for $\ep\le \frac{\delc}{2\lambda}$, $\tau\in[1/2,2]$, and $y\in\mathbb A$, we have
\begin{align*}
	\mathcal I_{1}^1 (\tau,y;\ep) 
	&\ge (1-2^{-15}) \int_{|\rho^2-1+\ep^2\tau^2|\le \lambda \ep} \frac{2\ep \tau }{(\rho^2-1+\ep^2\tau^2)^2+(2\ep\tau)^2} \frac{\varphi(\rho)}{2\rho} 2\rho d\rho \\
	&\ge \frac{1-2^{-15}}{2(1+\delc)} \int_{|t|\le \lambda\ep} \frac{2\ep\tau}{t^2+(2\ep\tau)^2} dt \ge \frac{1-2^{-15}}{4} \int_{|t|\le \frac{\lambda}{2\tau}} \frac{1}{t^2+1} dt \\
	&\ge 2^{-3}  b(\lambda /4 ) \ge 2 \big( b(\infty)-b(\lambda/4) \big)
\end{align*}
by the choice of $\lambda$ (see \eqref{e:lamb}). Meanwhile, since $\frac{\varphi(\rho)}{2\rho}\le \frac{\varphi(\rho)}{2(1-2\delc)} \le \varphi(\rho) \le 1$,
\begin{align*}
	|\mathcal I_{1}^2 (\tau,y;\ep) |
	&\le \int_{|\rho^2-1+\ep^2\tau^2|\ge \lambda \ep} \frac{2\ep \tau }{(\rho^2-1+\ep^2\tau^2)^2+(2\ep\tau)^2} \frac{\varphi(\rho)}{2\rho} 2\rho d\rho \\
	&\le \int_{|t|\ge \lambda \ep} \frac{2\ep \tau}{t^2+(2\ep\tau)^2} dt 
	=  \int_{|s|\ge\frac\lambda{2\tau}} \frac1{s^2+1}ds \\
	&\le b(\infty)-b(\lambda /4 ).
\end{align*}
Combining the estimates for $\CI_1^1$ and $\CI_1^2$ we have, for all $(\tau,y)\in [1/2,2]\times \mathbb A$ and $\ep$ small enough, 
\[	\mathcal I_{1}(\tau, y; \ep)\ge \,\mathcal I_1^1 (\tau, y; \ep)-|\mathcal I_{1}^2 (\tau, y; \ep)| \ge b(\infty)-b(\lambda /4 ). 	\]
Therefore, the proof of \eqref{lower_I1} is complete. 
\end{proof}

Now we prove Proposition \ref{p:example2}.
\begin{proof}[Proof of Proposition \ref{p:example2}]
It is enough to prove \eqref{eq_goal}. Let us fix a small enough $\delc>0$ and choose a nonnegative smooth function  $\phi\in C^\infty_0([1-2\delc,1+2\delc])$ such that 
\begin{itemize}
\item the function $\rho\mapsto(\rho+1)^\frac{d-2k}2\phi(\rho+1)$ is even;
\item $\sup_\rho\rho^{\frac{d-2k}2}\phi(\rho)\le 1$;
\item $\rho^{\frac{d-2k}2} \phi(\rho)=1$  on $[1-\delc, 1+\delc]$.
\end{itemize}
We define $f\in C_0^\infty(\R^d)$ by 
\[	\widehat f(\eta,\tau)=\frac{\phi(|\eta|)\phi(\tau)}{\psi_0(\ep_\circ^{-1}(1-|\eta|^2))\psi(\tau)}, \ \ (\eta, \tau)\in \R^{d-1}\times \R.	\] 
Then, it is clear that $\|f\|_{L^p(\R^d)} \lesssim 1$ for every $p$, and we have by the spherical coordinates in $\R^{d-1}$ 
\begin{align*}
\widetilde m_\ep(D)f(y,t) 
	&= \frac1{(2\pi)^d} \int e^{it\tau} \phi(\tau) \int \frac{\rho^{d-2} \phi(\rho)  \wh{d\sigma_{d-2}}(\rho y)}{(\rho^2-1+\ep^2\tau^2 + 2\ep\tau i )^k}   d\rho d\tau.
\end{align*}

Integration by parts reduces the order $k$ to $1$ in the denominator. In fact, we notice that for any integer $k\ge 2$,
\[ \frac1{(\rho^2-1+\ep^2\tau^2 + 2\ep\tau i )^k} = -\frac{1}{2\rho(k-1)} \partial_\rho \frac1{(\rho^2-1+\ep^2\tau^2 + 2\ep\tau i )^{k-1}}.\]
Thus, integrating by parts $(k-1)$-times with respect to $\rho$, we have 
\begin{equation}\label{eq_ibp}
\widetilde m_\ep(D)f(y,t) = c
\int e^{it\tau} \phi(\tau) \int \frac{\Phi(\rho,y)}{\rho^2-1+\ep^2\tau^2 + 2\ep\tau i } d\rho d\tau
\end{equation}
for some constant\footnote{Throughout the proof, we let $c$ denote some positive constant which may differ at each occurrence depending only on $k$ or $d$.} $c$,  where 
\[ \Phi(\rho,y) = T^{k-1} \big(\rho^{d-2} \phi(\rho)  \wh{d\sigma_{d-2}}(\rho y)\big) \  \ \text{with} \ \ Th:= \partial_\rho (\rho^{-1}h ).  \]
Let us recall the identity (see, for example, \cite[Appendix B]{Gra14a})
\[	\wh{d\sigma_{d-2}}(\rho y) = c |\rho y|^\frac{3-d}2 J_{\frac{d-3}2} (|\rho y|), \]
where  $J_{\nu}(r)$ denotes the Bessel function of the first kind and has the following property  (for $\RE \nu>-1/2$)
\[	\partial_r \big(r^{-\nu} J_\nu(r) \big) =-r^{-\nu} J_{\nu+1}(r), \ \  r>0.	\]
Making use of these identities and the chain rule $\partial_\rho=|y|\partial_r$, $r=\rho|y|$, we have
\begin{equation}\label{eq_Phi}
\Phi(\rho,y)=\sum_{l=0}^{k-1} |\rho y|^{\frac{3-d}2-l} J_{\frac{d-3}2 +l}(\rho|y|) \phi_l(\rho)|y|^{2 l}
\end{equation}
with $\phi_l\in C_0^\infty([1-2\delc,1+2\delc])$. In particular $\phi_{k-1}(\rho)=c\rho^{d-2}\phi(\rho)$.

Setting
\[	\CJ_l(y,t)= |y|^{\frac{3-d}2+l} \int e^{it\tau} \phi(\tau) \int \frac{ \rho^{\frac{3-d}2-l} \phi_l(\rho)}{\rho^2-1+\ep^2\tau^2 + 2\ep\tau i} \, J_{\frac{d-3}2+l} (|\rho y|) d\rho d\tau,	\]
we may rewrite \eqref{eq_ibp} as
\begin{equation}\label{eq_rewrite}
\wt m_\ep(D) f(y,t) = c \sum_{l=0}^{k-1}\CJ_l (y,t).
\end{equation}
In the rest of the proof, we aim to show that 
\begin{equation}\label{Jk}
|\CJ_{k-1}(y,t)|\gtrsim \ep^{\frac d2-k}
\end{equation}
on a subset\footnote{See \eqref{e:set_lower} below for a precise description.} of $\{(y,t)\in \R^{d-1}\times \R \colon |y|\sim \ep^{-1}, |t|\ll 1\}$, whereas for every $s>0$
\begin{equation}\label{Jl}
|\CJ_l(y,t)|\lesssim \ep^{\frac{d}2-1-l-s}, \ \ 0\le l \le k-2 .
\end{equation}
Hence, in this set, $\CJ_{k-1}$ would be the leading term and dominate the others in the summation  \eqref{eq_rewrite}.

First, let us analyze $\CJ_{k-1}$. We shall use the asymptotic (see \cite[p. 580]{Gra14a}) 
\[ J_{\frac{d-3}2+{k-1}}(\rho|y|) =\sqrt{\frac2{\pi \rho |y| }} \cos\Big(\rho|y|-\frac{\pi(d+2k-4)}4\Big) +R(\rho|y|), \]
where $|R(r)|\lesssim r^{-\frac32}$ for $r\ge1$, and the formula 
\[	\cos(u+v)=\cos u \cos v-\sin u \sin v	\]
with $u=(\rho-1)|y|$ and $v=|y|- \frac\pi 4 (d+2k-4)$. Using these we break $\CJ_{k-1}$ as 
\begin{align*}
\CJ_{k-1}(y,t) &= |y|^{\frac{2k-d+1}2} \int e^{it\tau} \phi(\tau) \int \frac{ c \rho^{\frac{d-2k+1}2} \phi (\rho)}{\rho^2-1+\ep^2\tau^2 + 2\ep\tau i} \, J_{\frac{d-3}2+l} (|\rho y|) d\rho d\tau \\
&= c\bigg( \sqrt{\frac{2}\pi}\CJ_{k-1}^1- \sqrt{\frac{2}\pi}\CJ_{k-1}^2 +\CJ_{k-1}^3 \bigg)
\end{align*}
where, setting $\alpha_{d,k}:=\frac{\pi} 4(d+2k-4)$,  we define
\begin{align*}
\CJ_{k-1}^1&:= |y|^{\frac{2k-d}2} \cos( |y|-\alpha_{d,k} )\int e^{it\tau} \phi(\tau) \int \frac{ \rho^{\frac{d-2k}2} \phi(\rho) \cos((\rho-1)|y|)}{\rho^2-1+\ep^2\tau^2 + 2\ep\tau i}\, d\rho d\tau,\\
\CJ_{k-1}^2&:= |y|^{\frac{2k-d}2} \sin ( |y|-\alpha_{d,k} )\int e^{it\tau} \phi(\tau) \int \frac{ \rho^{\frac{d-2k}2}\phi(\rho) \sin((\rho-1)|y|)}{\rho^2-1+\ep^2\tau^2+ 2\ep\tau i} \, d\rho d\tau,\\
\CJ_{k-1}^3&:= |y|^{\frac{2k+1-d}2} \int e^{it\tau} \phi(\tau) \int \frac{ \rho^{\frac{d-2k+1}2}\phi(\rho) }{\rho^2-1+\ep^2\tau^2  +2\ep\tau i} R (\rho |y|)d\rho d\tau.
\end{align*}
Regarding the lower bound for $|\CJ_{k-1}^1|$ we estimate that for $|\IM \CJ_{k-1}^1|$. Setting 
\begin{equation}\label{e:dvarphi}
	\varphi(\rho)=\rho^{\frac{d-2k}2}\phi(\rho)
\end{equation}
and  using the definitions in \eqref{eq_Ij}, we can write  
\[	\IM \CJ_{k-1}^1
	=|y|^{\frac{2k-d}2} \cos( |y|-\alpha_{d,k}) \int \phi(\tau)\big(\sin(t\tau) \CI_2(\tau,y;\ep) - \cos(t\tau) \CI_1(\tau,y; \ep) \big) d\tau .	\]
By the definition of $\phi$ the function $\varphi$ defined in \eqref{e:dvarphi} satisfies the assumption in Lemma \ref{lem_add}. By the lemma if $\ep$ is small enough, then we obtain the bounds 
\[\CI_1(\tau,y; \ep)\gtrsim 1\quad \text{and} \quad |\CI_2(\tau,y; \ep)|\lesssim 1 \]
whenever $c_1\ep^{-1} \le |y| \le c_2\ep^{-1}$ for some constants $c_1$ and $c_2$. Thus, 
if $|t|$ is sufficiently small and $c_1\ep^{-1} \le |y| \le c_2\ep^{-1}$, then we conclude that 
\begin{equation}\label{e:1}
	|\CJ_{k-1}^1(y,t)| \ge |\IM \CJ_{k-1}^1 (y,t)| \gtrsim |y|^{\frac{2k-d}2} \cos ( |y|- \alpha_{d,k} ) .	
\end{equation}
On the other hand, using the definitions \eqref{eq_Ij} and \eqref{e:dvarphi},
\[ \CJ_{k-1}^2= |y|^{\frac{2k-d}2} \sin ( |y|-\alpha_{d,k} )\int e^{it\tau} \phi(\tau)   \big( \CI_4(\tau,y; \ep) -i \,\CI_3(\tau,y;\ep) \big) d\tau.	\]
Thanks to Lemmas \ref{lem_Ij} and \ref{lem_add}, if $c_1\ep^{-1} \le |y| \le c_2\ep^{-1}$, then we have the estimate
\begin{equation}\label{e:2}
 |\CJ_{k-1}^2(y,t)| \lesssim |y|^{\frac{2k-d}2} |\sin ( |y|- \alpha_{d,k} )|. 
\end{equation}
Moreover, since $|R(\rho|y|)|\lesssim (\rho|y|)^{-\frac32}$ for $|y|\gtrsim 1/\rho\sim 1$, we see from Lemma \ref{lem_Ij} that 
\begin{equation}\label{e:3}
\begin{aligned}
|\CJ_{k-1}^3 (y,t)| &\lesssim |y|^{\frac{2k-d}2-1} \int |\phi(\tau)| \big( |\wt\CI_1(\tau; \ep)|+|\wt\CI_2(\tau; \ep)| \big) d\tau \\
&\lesssim |y|^{\frac{2k-d}2-1} \log\ep^{-1}.
\end{aligned}
\end{equation}
Combining the estimates \eqref{e:1}, \eqref{e:2}, and \eqref{e:3}, if $|t|\ll 1$ and $y$ lies in the set
 \begin{equation}\label{e:set_lower}
	\mathfrak S := \left\{y\in \R^{d-1}\colon c_1\ep^{-1} \le |y| \le c_2\ep^{-1},\, |y|-\alpha_{d,k}\in 2\pi\Z+[-c_0, c_0] \right\}
\end{equation}
for a sufficiently small constant $c_0$, then
\begin{align*}
 |\CJ_{k-1}(y,t)| &\ge |\CJ_{k-1}^1(y,t)|-|\CJ_{k-1}^2(y,t)|-|\CJ_{k-1}^3(y,t)| \\
 & \gtrsim |y|^{\frac{2k-d}2} \big( \cos ( |y|- \alpha_{d,k}) - c |\sin ( |y|-\alpha_{d,k})| - c|y|^{-1} \log\ep^{-1} \big) \\
 &\gtrsim \ep^{\frac d2-k}.
 \end{align*}
Thus the proof of the estimate \eqref{Jk} is complete. 

Secondly, we prove the estimates \eqref{Jl}.  Since $|J_\nu(r)|\lesssim r^{-\frac12}$ for $r\gtrsim 1$, Lemma \ref{lem_Ij} yields, for $s>0$ and $y\in \mathfrak S$,
\begin{align*}
|\CJ_l(y,t) |&\lesssim |y|^{\frac{2-d}2+l} \int |\phi(\tau)| \big( |\wt\CI_1(\tau;\ep) |+|\wt\CI_2(\tau;\ep)| \big) d\tau\\
&\lesssim |y|^{\frac{2-d}2+l} \big(1 +\log\ep^{-1}\big) \lesssim \ep^{\frac d2-l-1-s}.
\end{align*}

To sum up, we have proved that if $|t|\ll 1$ and $y\in \mathfrak S$ (with $c_0$ small enough), then 
\[	|\wt m_\ep (D) f(y,t)| \gtrsim \ep^{\frac d2-k}, \ \ 0<\ep \ll1. 	\]
Therefore, for some constant $\tilde c>0$ small enough,
\[	\|\wt m_\ep (D)\|_{p\to q} \gtrsim \|\wt m_\ep (D) f\|_{L^q(\mathfrak S\times [-\tilde c,\tilde c])} \gtrsim \ep^{\frac d2-k} |\mathfrak S|^{\frac1q} \sim \ep^{\frac{d}2-k-\frac{d-1}q},	\]
and the proof of \eqref{eq_goal} is completed.
\end{proof}

\subsubsection*{Acknowledgement}
Jeong is grateful for support by the Open KIAS Center at Korea Institute for Advanced Study. She is also partially supported by NRF-2020R1F1A1A01048520. Kwon is supported by a KIAS Individual Grant (MG073702)  and NRF-2020R1F1A1A01073520. Lee is supported by NRF-2022R1A4A1018904.

\bibliographystyle{amsalpha}

\end{document}